\newcommand{\head}[1]{\textnormal{\textbf{#1}}}
\newcommand{\normal}[1]{\multicolumn{1}{l}{#1}}
\DeclareMathOperator*{\argmin}{argmin}
\newcommand{\norm}[1]{\left\lVert#1\right\rVert}
\numberwithin{equation}{section}
\theoremstyle{plain}
\newtheorem{thm}{Theorem}[section]
\theoremstyle{remark}
\newtheorem{exam}[thm]{Example}
\newtheorem{rem}[thm]{Remark}
\newtheorem{defn}[thm]{Definition}
\newtheorem{lem}[thm]{Lemma}
\newtheorem{assu}[thm]{Assumption}
\newtheorem{prop}[thm]{Proposition}
\newtheorem{cor}[thm]{Corollary}
\begin{document}

\begin{frontmatter}
\title{Estimation and inference for precision matrices of non-stationary time series}
\runtitle{Time series precision matrix estimation}

\begin{aug}
\author{\fnms{Xiucai } \snm{Ding}\thanksref{m1}\ead[label=e1]{ xiucai.ding@mail.utoronto.ca}}, and 
\author{\fnms{Zhou} \snm{Zhou}\thanksref{m2}\ead[label=e2]{zhou@utstat.toronto.edu}}

\runauthor{X. Ding and Z. Zhou}

\affiliation{University of Toronto \thanksmark{m1}
\thanksmark{m2}}

\address{Department of Statistics \\
University of Toronto \\
100 St. George St. \\
Toronto, Ontario M5S 3G3 \\
Canada\\
\printead{e1}\\
\phantom{E-mail:\ }
\printead*{}}

\address{Department of Statistics \\
University of Toronto \\
100 St. George St. \\
Toronto, Ontario M5S 3G3 \\
Canada\\
\printead{e2}\\
\phantom{E-mail:\ }
\printead*{}}
\end{aug}

%
%
%
%

%

\begin{abstract}
We consider the estimation and inference of precision matrices of a rich class of locally stationary linear and nonlinear time series assuming that only one realization of the time series is observed.  Using a Cholesky decomposition technique, we show that the precision matrices can be directly estimated via a series of least squares linear regressions with smoothly time-varying coefficients. The method of sieves is utilized for the estimation and is shown to be  optimally adaptive in terms of estimation accuracy and efficient in terms of computational complexity. We establish an asymptotic theory for a class of ${\cal L}^2$ tests based on the nonparametric sieve estimators. The latter are used for testing whether the precision matrices are diagonal or banded. A Gaussian approximation result is established for a wide class of quadratic forms of non-stationary and possibly nonlinear processes of diverging dimensions, which is of interest by itself.
\end{abstract}

%
\begin{keyword}
\kwd{Non-stationary time series}
\kwd{Precision matrices}
\kwd{Cholesky decomposition}
\kwd{Sieve estimation}
\kwd{High dimensional Gaussian approximation}
\kwd{Random matrices}
\kwd{White noise and bandedness tests}
\end{keyword}

\end{frontmatter}

\section{Introduction}  
Consider a centered non-stationary time series $x_{1,n}$, $\cdots,$ $x_{n,n} \in \mathbb{R} $. Denote by $\Omega_n:=[\mbox{Cov}(x_{1,n},\cdots, x_{n,n})]^{-1}$ the precision matrix of the series.  Modelling, estimation and inference of $\Omega_n$ are of fundamental importance in a wide range of problems in time series analysis. For example, the ${\cal L}^2$ optimal linear forecast of $x_{n+1,n}$ based on $x_{1,n}$, $\cdots,$ $x_{n,n}$ is determined by $\Omega_n$ and the covariance between $x_{n+1,n}$ and $(x_{1,n},\cdots, x_{n,n})$ \cite{BD}. In time series regression with fixed regressors, the best linear unbiased estimator of the regression coefficient is a weighted least squares estimator with weights proportional to the square root of the precision matrix of the errors \cite{HD}. Furthermore, the precision matrix is a key part in Gaussian likelihood and quasi likelihood estimation and inference of time series \cite{BD, MN}. We shall omit the subscript $n$ in the sequel if no confusions arise.  

Observe that $\Omega$ is an $n\times n$ matrix. When the time series length $n$ is at least moderately large, it is generally not a good idea to first estimate the covariance matrix of $(x_{1},\cdots, x_{n})$ and then invert it to obtain an estimate of $\Omega$. One main reason is that small errors in the covariance matrix estimation may be amplified through inversion when $n$ is large, especially when the condition number of the covariance matrix is large. Also matrix inversion is not computationally efficient for large $n$. As a result it is desirable to directly estimate $\Omega$.  In this paper, we utilize a Cholesky decomposition technique to directly estimate $\Omega$ through a series of least squares linear regressions. Specifically, write
\begin{equation}\label{defn_bestlinear}
x_i=\sum_{j=1}^{i-1} \phi_{ij} x_{i-j}+\epsilon_i, \ i=2,\cdots, n
\end{equation}
where $\sum_{j=1}^{i-1} \phi_{ij} x_{i-j}:=\widehat{x}_i$ is the best linear forecast of $x_i$ based on $x_1,\cdots,x_{i-1}$ and $\epsilon_i$ is the forecast error. Let $\epsilon_1:=x_1$ and denote by $\sigma^2_i$ the variance of $\epsilon_i$, $i=1,2,\cdots,n$. Observe that $\epsilon_i$ are uncorrelated random variables. As a result it is straightforward to show that \cite{P}
\begin{equation} \label{estimationeq}
\Omega=\Phi^*\widetilde{\mathbf{D}}\Phi,
\end{equation}
where the diagonal matrix $\widetilde{\mathbf{D}}=\operatorname{diag}\{\sigma_1^{-2}, \cdots, \sigma_n^{-2}\}$, $\Phi$ is a lower triangular matrix having ones on its diagonal and $-\phi_{ij}$ at its $(i,i-j)-$th element  for $j<i$ and $*$ denotes matrix or vector transpose. The most significant advantage of the above Cholesky decomposition is structural simplification that transfers the difficult problem of precision matrix estimation to that of estimating a series of least squares regression coefficients and error variances.

However, the Cholesky decomposition idea is not directly applicable to precision matrix estimation of non-stationary time series. The reason is that there are in total $n(n+1)/2$ regression coefficients and error variances to be estimated in the Cholesky decomposition of $\Omega$. Meanwhile, observe that there are also $n(n+1)/2$ parameters to be estimated for the precision matrix of a general non-stationary time series. Hence Cholesky decomposition, though performs structural simplification, does not reduce the dimensionality of the parameter space. On the other hand, we only observe one realization of the time series with $n$ observations. As a result dimension reduction techniques with natural assumptions in non-stationary time series analysis are needed for the estimation of $\Omega$.

We adopt two natural and widely used assumptions in non-stationary time series for the dimension reduction. First such assumption is local stationarity which refers to slowly or smoothly time-varying underlying data generating mechanisms of the series. Utilizing the locally stationary framework in Zhou and Wu \cite{WZ1}, we show that, for a wide class of locally stationary nonlinear processes, each off-diagonal of the $\Phi$ matrix as well as the error variance series $\sigma^2_i$ can be well approximated by smooth functions on [0,1].  Specifically, we show that there exist smooth functions  $\phi_j(\cdot)$ and $g(\cdot)$ such that $\sup_{i>b}|\phi_{ij}-\phi_j(i/n)|=o(n^{-1/2})$, $j=1,2,\cdots,n$ and $\sup_{i>b}|\sigma^2_i-g(i/n)|=o(n^{-1/2})$, where $b=b_n$ diverges to infinity with $b/n\rightarrow 0$ whose specific value will be determined later in the article. To our knowledge, the latter is the first result on smooth approximation to general non-stationary precision matrices. From classic approximation theory \cite{SC}, a $d$ times continuously differentiable function can be well approximated by a basis expansion with $O((n/\log n)^{1/(2d+1)})$ parameters. Thanks to the local stationarity assumption, the number of parameters needed for estimating $\sigma^2_i$ is reduced from $n$ to $O((n/\log n)^{1/(2d+1)})$. Similar conclusion holds for each off-diagonal of $\Phi$. 

The second assumption we adopt is short range dependence which refers to fast decay of the dependence between $x_i$ and $x_{i+j}$ as $j$ diverges. Using the physical dependence measures introduced in Zhou and Wu \cite{WZ1}, modern operator spectral theory and approximation theory \cite{DMS, CT}, we show, as a theoretical contribution of the paper, that the off-diagonals of $\Phi$ decays fast to zeros for a general class of locally stationary short range dependent processes. Specifically, we show that $\phi_{ij}$ can be effectively treated as 0 whenever $j>b$. Hence the total number of parameters one needs to estimate is reduced to the order $b[b+(n/\log n)^{1/(2d+1)}]$ which is typically much smaller than the sample size $n$.

Now we utilize the method of sieves to estimate the smooth functions $\phi_j(\cdot)$ and $g(\cdot)$ mentioned above. The method of sieves refers to approximating an infinite dimensional space with a sequence of finer and finer finite dimensional subspaces. Typical examples include Fourier, wavelet and orthogonal polynomial approximations to smooth functions on compact intervals. We refer to \cite{CXH} by Chen for a thorough review of the subject. There are two major advantages of the sieve method when used for precision matrix estimation. First, many sieve estimators, such as the Fourier and wavelet methods mentioned above, do not have inferior performances at the boundary of the estimating interval. This is important as inaccurate estimates at the boundary may drastically lower the accuracy of the whole precision matrix estimation even though entries are well estimated in the interior. Second, the computation complexities of many sieve methods are both adaptive (to the smoothness of the functions of interest) and efficient. When estimating one smooth function of time, local methods such as the kernel estimation perform one regression at each time point. This could be computational inefficient when $n$ is large. On the contrary, the above mentioned three sieve methods only need to perform a single regression at the whole time interval with the number of covariates determined by the smoothness of the function of interest. In many cases this yields a much faster estimation. For instance, in the extreme case where the time series dependence is exponentially decaying and the functions are infinitely differentiable, the sieve method only needs $O(n\log^5 n)$ computation complexity to estimate $\Omega$. Under the same scenario, the kernel method is of $O(n^2k_n\log^2 n)$  computation complexity where $k_n$ is the bandwidth used for the regression and is typically of the order $n^{-1/5}$.

In this paper, we show that the sieve estimates of the functions $\phi_j(\cdot)$ achieve, uniformly over time and $j$, minimax rate  for nonparametric function estimation \cite{SC}. This extends previous convergence rate results on nonparametric sieve regression to the case of diverging number of covariates and non-stationary predictors and errors. Combining the latter result with modern random matrix theory
\cite{JT}, we show that the operator norm of the estimated precision matrix converges at a fast rate which is determined by the strength of time series dependence and smoothness of the underlying data generating mechanism. In the best scenario where the dependence is exponentially decaying and $\phi_j(\cdot)$ and $g(\cdot)$ are infinitely differentiable, the convergence rate is shown to be of the order $\log^3n/\sqrt{n}$, which is almost as fast as parametrically estimating a single parameter from i.i.d. samples. The sieve estimators have already been used to estimate the smooth conditional mean function in various settings. For instance, in \cite{BCCK}, the authors proved that the sieve least square estimators could achieve minimax rate in the sense of sup-norm loss for a fixed number of i.i.d regressors and errors with a general class of sieve basis functions; later Chen and Christensen \cite{CC} showed that the spline and wavelet sieve regression estimators attain the above global uniform convergence rate for a fixed number of weakly dependent and stationary regressors. In this article, we study nonparametric sieve estimates for locally stationary time series with diverging number of covariates under physical dependence and obtain the same minimax rate for the functions $\phi_j(\cdot).$

After estimating $\Omega$, one may want to perform various tests on its structure. In this paper, we focus on two such tests, one on whether $\{x_i\}_{i=1}^n$ is a non-stationary white noise and the other on whether $\Omega$ is banded. Two test statistics based on the ${\cal L}^2$ distances between the estimated and hypothesized $\Phi$ are proposed. These tests boil down to quadratic forms of the estimated sieve regression coefficients which are quadratic forms of non-stationary, dependent vectors of diverging dimensionality. To our knowledge, there have been no previous works on ${\cal L}^2$ inference of nonparametric sieve estimators as well as the inference of high dimensional quadratic forms of non-stationary nonlinear time series. Here we utilize Stein's method together with an $m$-dependence approximation technique and prove that the laws of a large class of quadratic forms of non-stationary nonlinear processes with diverging dimensionality can be well approximated by those of quadratic forms of diverging dimensional Gaussian processes. Consequently asymptotic normality can be established for those high dimensional quadratic forms. The latter Gaussian approximation result is of separate interest and may of wider applicability in non-stationary time series analysis. In \cite{XZW}, Xu, Zhang and Wu derived the $\mathcal{L}^2$ asymptotics for the quadratic form $\overline{X}^*\overline{X},$ where $\overline{X}$ is the sample mean of $n$ i.i.d. random vectors and $\overline{X}^*$ is its transpose. In the present paper, we prove new and much more general $\mathcal{L}^2$ asymptotics for quadratic forms $\overline{\mathbf{Z}}^* E \overline{\mathbf{Z}}$ for any bounded positive semi-definite matrix $E$ using Stein's method \cite{AR, ZC}, where $\overline{\mathbf{Z}}$ is the sample mean of a high dimensional, non-stationary and dependent process. It is very interesting that similar ideas have been used in proving the universality of random matrix theory \cite{DY, EYY, KY1, TV}. 

 We point out that the idea of Cholesky decomposition has been used in time series analysis under some different settings when multiple replicates of the vector of interest are available. Assuming a longitudinal setup where multiple realizations can be observed, Wu and Pourahmadi \cite{WP} studied the estimation of covariance matrices using nonparametric smoothing techniques. Bickel and Levina \cite{BL} considered estimating large covariance and precision matrices by either banding or tapering the sample covariance matrix and its inverse assuming that multiple independent samples can be observed. On the other hand, we assume that only one realization of the time series is observed which is the case in many real applications. Hence none of the aforementioned results can be applied under this scenario. 

Finally, we mention that estimating large dimensional covariance and precision matrices has attracted much attention in the last two decades.  One main research line is to assume that we can observe $n$ i.i.d copies of a $p$-dimensional random vector. When $p$ is comparable or larger than $n,$ it is well-known that sample covariance and precision matrices  are inconsistent estimators \cite{DXC1709, DP}. To overcome the difficulty from high dimensionality, researchers usually impose two main structural assumptions in order to consistently estimate the covariance and precision matrices: sparsity structure and factor model structure. Various families of covariance matrices and regularization methods have been introduced assuming some types of sparsity, this includes the bandable covariance matrices \cite{BL, CRZ, WP}, sparse covariance matrices \cite{CZ, LF, ZZM} and sparse precision matrices \cite{yuan2010, yuan2007}.  
 On the other hand, factor models in the high dimensional setting have been used in a range of applications
in finance and economics.
For a comprehensive review on factor model based methods, we refer to \cite{FLW}. 
 Although high dimensional covariance and precision matrix estimation has witnessed unprecedented development, statistical inference of high dimensional and non-stationary time series remains largely untouched so far. Under stationarity, \cite{MP, XW1} considers thresholding and banding techniques for estimating the covariance matrix with only one realization of the series. Under sparsity assumptions, \cite{CXW} estimates {\it marginal} covariance and precision matrices of high-dimensional stationary and locally stationary time series using thresholding and Lasso techniques. Note that when estimating marginal covariance or precision matrices of a $p$ dimensional time series of length $n$, the series can be viewed as $n$ dependent replicates of the vector of interest which is completely different than the situation considered in this article.

The rest of the paper is organized as follows. In Section \ref{sec_localstation}, we introduce a rich class of non-stationary (locally stationary) and nonlinear time series and study the theoretical properties of its covariance and precision matrices. In Section \ref{sec_est}, we consistently estimate the precision matrices and provide convergent rates for these estimators. In Section \ref{sec_test}, we propose two efficient testings using some simple statistics from our estimation procedure. In Appendix \ref{sec_examsimu}, we give Monte Carlo simulations to illustrate our results. Technical proofs are left to Appendix \ref{sec_proofs} and \ref{appendix_a}. Some auxiliary lemmas are provided in Appendix \ref{app_d}. 



\section{Locally stationary time series}\label{sec_localstation}
Consider a locally stationary time series \cite{ ZZ, WZ1, WZ2} 
\begin{equation}\label{defn_model}
x_i=G(\frac{i}{n}, \mathcal{F}_i),
\end{equation}
where $\mathcal{F}_i=(\cdots, \eta_{i-1}, \eta_i)$ and $\eta_i, \ i  \in \mathbb{Z}$ are i.i.d  random variables, and $G:[0,1] \times \mathbb{R}^{\infty} \rightarrow \mathbb{R}$ is a measurable function such that $\xi_i(t):=G(t, \mathcal{F}_i)$ is a properly defined random variable for all $t \in [0,1].$ The above represents a wide class of locally stationary linear and nonlinear processes. We refer to Zhou and Wu \cite{WW, WZ1, WZ2} for detailed discussions and examples. And following \cite{WW, WZ1, WZ2}, we introduce the following dependence measure to quantify the temporal dependence of (\ref{defn_model}).
\begin{defn}\label{defn_physical} Let $\{\eta_i^{\prime}\}$ be an i.i.d. copy of $\{\eta_i\}.$ Assuming that for some $q>0,\ ||x_i||_q<\infty, $ where $|| \cdot ||_q=[\mathbb{E} |\cdot|^q ]^{1/q}$ is the $\mathcal{L}_q$ norm of a random variable.  For $j \geq 0,$ we define the physical dependence measure by 
\begin{equation}\label{eq_phyoriginal}
\delta(j,q):=\sup_{t \in [0,1]} \max_{i} \left|\left|G(t, \mathcal{F}_i)-G(t, \mathcal{F}_{i,j})\right|\right|_q,
\end{equation}
where $\mathcal{F}_{i,j}:=(\mathcal{F}_{i-j-1}, \eta^{\prime}_{i-j},\eta_{i-j+1}, \cdots, \eta_i).$ 
\end{defn}   
The measure $\delta(j, q)$ quantifies the changes in the system's output when the input of the system
$j$ steps ahead is changed to an i.i.d. copy. If the change is small, then we have short-range
dependence. It is notable that $\delta(j,q)$ is related to the data generating mechanism and can be easily computed. We refer the readers to  \cite[Section 4]{WZ1} for examples of such computation. 

In the present paper, we impose the following assumptions on (\ref{defn_model}) and the physical dependence measure to control the temporal dependence of the non-stationary time series. 
\begin{assu}\label{assu_phylip} There exists a  constant $\tau>10$ and $q>4$, for some constant $C>0$, we have that
\begin{equation}\label{assum_phy}
\delta(j,q) \leq Cj^{-\tau},  \ j \geq 1.
\end{equation}
Furthermore,  $G$ defined in (\ref{defn_model}) satisfies the property of stochastic Lipschitz continuity,  for any $t_1, t_2 \in [0,1]$, we have
\begin{equation}\label{assum_lip}
\left| \left| G(t_1, \mathcal{F}_{i})-G(t_2,\mathcal{F}_i) \right|\right|_q \leq C|t_1-t_2|.
\end{equation}
We also assume that
\begin{equation}\label{assum_moment}
\sup_t \max_i ||G(t,\mathcal{F}_i) ||_q<\infty.
\end{equation}
\end{assu}
(\ref{assum_phy}) indicates that the time series has short-range dependence. (\ref{assum_lip}) implies that $G(\cdot, \cdot)$ changes smoothly over time and ensures local stationarity. Furthermore, for each fixed $t \in [0,1],$ denote \begin{equation}
\gamma(t,j)=\mathbb{E}(G(t, \mathcal{F}_0), G(t, \mathcal{F}_{j})),
\end{equation}
(\ref{assum_lip}) and (\ref{assum_moment}) imply that $\gamma(t,j)$ is Lipschitz continuous in $t$. Furthermore, we need the following mild assumption on the smoothness of $\gamma(t,j).$
\begin{assu}\label{assu_smmothness} For any $j \geq 0,$ we assume that $\gamma(t,j) \in C^d([0,1]), d>0$ is some integer, where $C^d([0,1])$ is the function space on $[0,1]$ of continuous functions that have continuous first $d$ derivatives. 
\end{assu}

 \subsection{Examples} 
In this subsection, we list a few examples of locally stationary processes satisfying  Assumption \ref{assu_phylip} and \ref{assum_phy}.
We first consider two linear processes.  
\begin{exam}[Nonstationary linear processes]\label{exam_linear} Let $\{\epsilon_i\}$ be i.i.d random variables, let $a_j(\cdot), j=0,1,\cdots$ be $C^d([0,1])$ functions such that 
\begin{equation*}
G(t, \mathcal{F}_i)=\sum_{k=0}^{\infty} a_j(t) \epsilon_{i-k}.
\end{equation*}   
The above model is studied in \cite[Section 4.1]{WZ1}.  By \cite[Proposition 2]{WZ1}, we find that Assumption \ref{assu_phylip} will be satisfied if 
\begin{equation*}
 \sup_{t \in [0,1]} |a_j(t)|^{\min(2,q)} \leq C j^{-\tau}, \ j \geq 1; \ \sum_{j=0}^{\infty} \sup_{t \in [0,1]} |a_j'(t)|^{\min(2,q)}<\infty, 
\end{equation*} 
for some constant $C>0.$ Furthermore, by the rule of term by term differentiation,  Assumption \ref{assu_smmothness} will be satisfied if 
\begin{equation*}
 \sup_{t \in [0,1]} |a_j^{(d)}(t)|^{\min(2,q)}\leq  C j^{-\tau}, \ j \geq 1.
\end{equation*} 

A concrete example is the time-varying MA($q$) process. Since the trigonometric functions are $C^{\infty}$, it is easy to check that 
\begin{equation*}
\sum_{k=0}^q a_j(t) \epsilon_{i-k},  \ q>0 \ \text{is a fixed constant},
\end{equation*} 
with $a_{j}(t)=\alpha_j \cos(2 \pi t)$ or $\beta_j\sin(2 \pi t), \ |\alpha_j|<1, |\beta_j|<1$ satisfy such assumptions. 

\end{exam}

%

\begin{exam}[Nonstationary nonlinear process] Let $\{\epsilon_i\}$ be i.i.d random variables. We now consider a process of the following form 
\begin{equation}\label{eq_nonlinear}
\xi_i(t)=R(t, \xi_{i-1}(t), \epsilon_i), 
\end{equation}
where $R$ is some (possibly nonlinear) measurable function. This process has been studied in \cite[Section 4.2]{WZ1}. Suppose that for some $x_0,$ we have $\sup_{t \in [0,1]} \norm{R(t, x_0, \epsilon_i)}_q <\infty.$ Denote 
\begin{equation*}
\chi:=\sup_{t \in [0,1]} L(t), \ \text{where} \ L(t)=\sup_{x \neq y} \frac{\norm{R(t, x, \epsilon_0)-R(t, y, \epsilon_0)}_q}{|x-y|}
\end{equation*}
It is known from \cite[Theorem 6]{WZ1} that  if $\chi<1,$ then (\ref{eq_nonlinear}) admits a unique locally stationary solution with $\xi_i(t)=G(t, \mathcal{F}_i)$ and the physical dependence measure satisfies that 
$\delta(j,q) \leq  C \chi^j. $
Hence, the temporal dependence is of  exponentially decay (see equation (\ref{eq_phyexp})) which is much faster than (\ref{assu_phylip}). Furthermore,  we conclude from \cite[Proposition 4]{WZ1} that (\ref{assum_lip}) holds true if 
\begin{equation*}
\sup_{t \in [0,1]} \norm{M(G(t, \mathcal{F}_0))}_q <\infty, \  \text{where} \ M(x)=\sup_{0 \leq t<s \leq 1 } \frac{\norm{R(t,x,\epsilon_0)-R(s, x, \epsilon_0)}_q}{|t-s|}.
\end{equation*}
To verify Assumption \ref{assu_smmothness}, we assume that $G(t, \mathcal{F}_i)$ admits the following Volterra expansion \cite{WW} 
\begin{equation*}
G(t,\mathcal{F}_i)=\sum_{k=1}^{\infty} \sum_{u_1, \cdots, u_k=0}^{\infty} g_k(t,u_1,\cdots, u_k)\epsilon_{i-u_1} \cdots \epsilon_{i-u_k},
\end{equation*}
where $g_k's$ are the Volterra kernels. Suppose $g_k \in C^d[0,1]$ for $t$ and $$\sup_{t \in [0,1]} \sum_{k=1}^{\infty} \sum_{u_1, \cdots, u_k=0}^{\infty} (g^{(d)}_k(t,u_1,\cdots, u_k))^2<\infty,$$
by the rule of term by term differentiation, we can easily see that Assumption \ref{assu_smmothness} holds. 

A concrete example is the time-varying threshold autoregressive (TVTAR) model (see \cite[Example 1]{WZ1}) where (\ref{eq_nonlinear}) has the following form
\begin{equation*}
\xi_i(t)=a(t)[\xi_{i-1}(t)]^++b(t)[-\xi_{i-1}(t)]^++\epsilon_i.
\end{equation*}
We can see that Assumption \ref{assu_phylip} and \ref{assu_smmothness} are satisfied if $a(t), b(t) \in C^d[0,1]$ and $\sup_{t \in [0,1]} [|a(t)|+|b(t)|]<1.$

\end{exam}

\subsection{Theoretical properties of locally stationary properties} Many important consequences can be derived due to  Assumption \ref{assu_phylip} and \ref{assu_smmothness}. We list the most useful ones in this section and put their proofs into Appendix \ref{appendix_a}.  The first one is the following control on $\gamma(t,j).$ 
\begin{lem}\label{lem_phy} Under  Assumption \ref{assu_phylip} and \ref{assu_smmothness}, there exists some constant $C>0,$ such that  
\begin{equation*}
\sup_{t} |\gamma(t,j)| \leq Cj^{-\tau}, \ j \geq 1. 
\end{equation*}
\end{lem}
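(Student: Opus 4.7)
The plan is to bound $|\gamma(t,j)|$ via a martingale projection decomposition that is standard in the physical dependence framework. Since the series is centered, $\gamma(t,j)=\mathbb{E}[G(t,\mathcal{F}_0)G(t,\mathcal{F}_j)]$; the Lipschitz property (\ref{assum_lip}) and the smoothness of $\gamma(t,j)$ in $t$ (Assumption \ref{assu_smmothness}) are not actually used for the pointwise-in-$t$ decay bound, only the physical dependence measure (\ref{assum_phy}) and the moment bound (\ref{assum_moment}). Consequently the whole argument runs uniformly in $t$.

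First I would introduce the martingale difference operator $P_k X := \mathbb{E}[X\mid \mathcal{F}_k]-\mathbb{E}[X\mid\mathcal{F}_{k-1}]$ associated with the filtration $\mathcal{F}_k=(\ldots,\eta_{k-1},\eta_k)$. Since $G(t,\mathcal{F}_i)\in L^2$ and is centered, I get the $L^2$-convergent telescoping decompositions
\begin{equation*}
G(t,\mathcal{F}_0)=\sum_{k\leq 0} P_k G(t,\mathcal{F}_0), \qquad G(t,\mathcal{F}_j)=\sum_{k\leq j}P_k G(t,\mathcal{F}_j).
\end{equation*}
Because $P_k G(t,\mathcal{F}_0)=0$ for $k>0$ and $\{P_k\}_{k\in\mathbb{Z}}$ are pairwise orthogonal in $L^2$, only the indices $k\leq 0$ survive, giving
\begin{equation*}
\gamma(t,j)=\sum_{k\leq 0}\mathbb{E}\bigl[P_k G(t,\mathcal{F}_0)\,P_k G(t,\mathcal{F}_j)\bigr].
\end{equation*}

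Next I would invoke the standard coupling bound $\|P_k G(t,\mathcal{F}_i)\|_q\leq \delta(i-k,q)$, obtained by noting that $\mathbb{E}[G(t,\mathcal{F}_i)\mid\mathcal{F}_{k-1}]=\mathbb{E}[G(t,\mathcal{F}_{i,i-k})\mid\mathcal{F}_k]$ (since $\eta'_k$ is i.i.d.\ copy of $\eta_k$ independent of $\mathcal{F}_k$), so that $P_k G(t,\mathcal{F}_i)=\mathbb{E}[G(t,\mathcal{F}_i)-G(t,\mathcal{F}_{i,i-k})\mid\mathcal{F}_k]$, and then applying conditional Jensen. Since $q>4\geq 2$, $\|\cdot\|_2\leq \|\cdot\|_q$, and Cauchy--Schwarz in $L^2$ together with the substitution $m=-k\geq 0$ yields
\begin{equation*}
|\gamma(t,j)|\leq \sum_{m\geq 0}\delta(m,q)\,\delta(j+m,q).
\end{equation*}

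Finally I would use (\ref{assum_phy}) and (\ref{assum_moment}) to split the tail. The $m=0$ contribution is at most $\delta(0,q)\cdot\delta(j,q)\leq C j^{-\tau}$, using that $\delta(0,q)\leq 2\sup_t\max_i\|G(t,\mathcal{F}_i)\|_q<\infty$. For $m\geq 1$, bounding $(j+m)^{-\tau}\leq j^{-\tau}$ and using $\sum_{m\geq 1}m^{-\tau}<\infty$ (since $\tau>10>1$) gives $\sum_{m\geq 1}\delta(m,q)\delta(j+m,q)\leq C j^{-\tau}$. Summing both parts gives the claim uniformly in $t$. The only mildly delicate point is the coupling identity $\|P_k G(t,\mathcal{F}_i)\|_q\leq \delta(i-k,q)$, which must be aligned with the paper's indexing of $\mathcal{F}_{i,j}$; once that is set, the rest is a brief summation argument.
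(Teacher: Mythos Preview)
Your proof is correct and follows essentially the same route as the paper: a martingale projection decomposition $G(t,\mathcal{F}_i)=\sum_{k\le i}\mathcal{P}_k G(t,\mathcal{F}_i)$, the coupling bound $\|\mathcal{P}_k G(t,\mathcal{F}_i)\|\le\delta(i-k,\cdot)$, and then summation of $\sum_m\delta(m,\cdot)\delta(j+m,\cdot)\le Cj^{-\tau}$. Your write-up is in fact more explicit than the paper's (which simply cites \cite{WW} for the key inequality) and your observation that Assumption~\ref{assu_smmothness} is not actually used is accurate.
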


Our first important conclusion is that the coefficients defined in (\ref{defn_bestlinear}) is of polynomial decay. Hence, when $i>b$ is large, where $b=O(n^{2/\tau}),$ we only need to focus on autoregressive fit of order $b$ instead of $i-1.$ Recall (\ref{defn_bestlinear}), denote $\bm{\phi}_i=(\phi_{i1}, \cdots, \phi_{i,i-1})^*$. Then we have 
\begin{equation}\label{defn_bestcoeff}
\bm{\phi}_i=\Omega_i \bm{\gamma}_i,
\end{equation}
where $\Omega_i$ and $\bm{\gamma_i}$ are defined as $\Omega_i=[\operatorname{Cov}(\mathbf{x}^i_{i-1}, \mathbf{x}^i_{i-1})]^{-1}, \ \bm{\gamma}_i=\operatorname{Cov}(\mathbf{x}^i_{i-1}, x_i), $
with $\mathbf{x}^i_{i-1}=(x_{i-1},\cdots, x_1)^*.$  The above claims are formally summarized in the following proposition. 
\begin{prop} \label{lem_borderapproximation} Under Assumption \ref{assu_phylip} and letting $b=O(n^{2/\tau})$, there exists some constant $C>0,$ such that  
\begin{equation} \label{eq_phijbound}
\sup_{i>b}|\phi_{ij}| \leq
\begin{cases}
 \max \{ Cn^{-4+5/\tau}, Cj^{-\tau} \}, & i \geq b^2; \\
 \max\{ Cn^{-2+3/\tau}, Cj^{-\tau}\}, & b<i<b^2. 
 \end{cases}
\end{equation}
Furthermore, when $i>b,$ denote $\bm{\phi}_i^b=(\phi_{i1}, \cdots, \phi_{ib}), $ and $\widetilde{\bm{\phi}}_i^b=\Omega_i^b \gamma_i^b$ with entries $(\widetilde{\phi}_{i1}, \cdots,\widetilde{\phi}_{ib}),$ where $\Omega_i^b=[\operatorname{Cov}(\mathbf{x}_{i}, \mathbf{x}_{i})]^{-1}, \gamma_i^b=\mathbb{E}(\mathbf{x}_{i} x_i), \ \mathbf{x}_{i}=(x_{i-1}, \cdots, x_{i-b}),$ we have 
\begin{equation*}
\sup_i \left|\left|\bm{\phi}_i^b-\widetilde{\bm{\phi}}_i^b\right|\right| \leq Cn^{-2+1/\tau}.
\end{equation*}
\end{prop}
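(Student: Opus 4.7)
The proof plan rests on the Yule--Walker identity $\bm{\phi}_i = \Omega_i \bm{\gamma}_i$ from (\ref{defn_bestcoeff}) combined with Lemma \ref{lem_phy}.

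\textbf{Step 1 (polynomial off-diagonal decay of $\Omega_i$).} The covariance entries $(\Omega_i^{-1})_{rs} = \operatorname{Cov}(x_{i-r}, x_{i-s})$ satisfy $|(\Omega_i^{-1})_{rs}| \leq C|r-s|^{-\tau}$ by Lemma \ref{lem_phy} together with the Lipschitz property (\ref{assum_lip}) that lets us compare entries to a fixed-time covariance function. Schur's test applied to the summable row/column sums bounds $\|\Omega_i^{-1}\|$ uniformly above, and Assumption \ref{assu_phylip} guarantees a uniform lower bound on the smallest eigenvalue, so $\|\Omega_i\| \leq C$ uniformly in $i$. A Jaffard-type theorem (the polynomial analogue of the Demko--Moss--Smith inverse-decay result already cited in the introduction) then yields $|(\Omega_i)_{jk}| \leq C|j-k|^{-\tau}$ with constants independent of $i$.

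\textbf{Step 2 (pointwise bound on $\phi_{ij}$).} Expand $\phi_{ij} = \sum_{k=1}^{i-1}(\Omega_i)_{jk}\,\bm{\gamma}_i(k)$ and split the sum into the regions $k \leq j/2$, $j/2 < k < 2j$, and $k \geq 2j$. In each region at least one of the two polynomial factors is small enough for summability; routine bookkeeping of the tails yields the pure decay bound $|\phi_{ij}| \leq C j^{-\tau}$. Separately, a uniform-in-$j$ bound is obtained from a ``stationary approximation'' argument: replacing $\Omega_i^{-1}$ by its fixed-time Toeplitz analogue at $t = i/n$ introduces a perturbation of per-entry size $O(n^{-1})$ by (\ref{assum_lip}), and the propagation of this perturbation through $\Omega_i$ depends on the length of the available history, producing the rate $Cn^{-4+5/\tau}$ when $i \geq b^2$ and the weaker $Cn^{-2+3/\tau}$ when $b < i < b^2$. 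Taking the maximum with $Cj^{-\tau}$ gives (\ref{eq_phijbound}).

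\textbf{Step 3 (block approximation).} Partition
\begin{equation*}
\Omega_i^{-1} = \begin{pmatrix} A & B \\ B^* & C \end{pmatrix}, \qquad A = (\Omega_i^b)^{-1},
\end{equation*}
so that the first $b$ equations of $\Omega_i^{-1}\bm{\phi}_i = \bm{\gamma}_i$ read $A\bm{\phi}_i^b + B\bm{\phi}_i^{\mathrm{rest}} = \gamma_i^b$, yielding the key identity
\begin{equation*}
\bm{\phi}_i^b - \widetilde{\bm{\phi}}_i^b \;=\; -A^{-1} B\, \bm{\phi}_i^{\mathrm{rest}}.
\end{equation*}
By Step 1 applied to the submatrix, $\|A^{-1}\| \leq C$. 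The bound $|B_{jk}| \leq C(b-j+k)^{-\tau}$ combined with a direct double sum gives $\|B\|_F^2 \leq C\sum_{j=1}^b (b-j+1)^{-2\tau+1} \leq C$ since $2\tau-1 > 1$. For $j > b = O(n^{2/\tau})$ the tighter piece $|\phi_{ij}| \leq Cj^{-\tau}$ from Step 2 is available, hence
\begin{equation*}
\|\bm{\phi}_i^{\mathrm{rest}}\|^2 \;\leq\; C\sum_{j>b} j^{-2\tau} \;\leq\; Cb^{-2\tau+1} \;=\; Cn^{-4+2/\tau},
\end{equation*}
and multiplying the three factors gives $\sup_i\|\bm{\phi}_i^b - \widetilde{\bm{\phi}}_i^b\| \leq Cn^{-2+1/\tau}$.

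\textbf{Main obstacle.} The decisive step is Step 1: classical Demko--Moss--Smith gives exponential inverse decay for banded matrices, but here we have only polynomial off-diagonal decay, so Jaffard-type theory is required, and the constant must be tracked to be uniform in the dimension $i$. This uniformity ultimately reduces to uniform control of the condition number of $\Omega_i^{-1}$, which rests on the short-range dependence in Assumption \ref{assu_phylip}. The remaining regime-by-regime bookkeeping in Step 2 needed to produce the specific exponents $-4+5/\tau$ and $-2+3/\tau$ is technical but essentially routine once the inverse-decay bound of Step 1 is available.
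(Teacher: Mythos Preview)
Your overall route is correct but genuinely different from the paper's, and in one place slightly stronger. The paper never invokes a Jaffard-type polynomial inverse-decay theorem. Instead, for $i\geq b^2$ it truncates $\Gamma_i=\Omega_i^{-1}$ to a $b^2$-banded matrix $\Gamma_i^T$, checks via Weyl's inequality and Lemma~\ref{lem_phy} that $\Gamma_i^T$ stays uniformly positive definite, and then applies the Demko--Moss--Smith exponential bound (Lemma~\ref{lem_band}) to $(\Gamma_i^T)^{-1}$. The term $Cn^{-4+5/\tau}$ is precisely the approximation error $\|\Omega_i\bm{\gamma}_i-(\Gamma_i^T)^{-1}\bm{\gamma}_i\|$ incurred by this banding, while the $Cj^{-\tau}$ piece comes from convolving the (sub)exponential decay of $(\Gamma_i^T)^{-1}$ against the $k^{-\tau}$ decay of $\bm{\gamma}_i$. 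For $b<i<b^2$ the paper simply bands at width $b$ instead, which yields the coarser $n^{-2+3/\tau}$. So the two regimes in (\ref{eq_phijbound}) are artifacts of the chosen banding widths, not of any ``length of available history'' effect as you suggest.

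Your Jaffard step is legitimate---the constant in Jaffard's theorem depends only on the decay exponent and the condition number, both of which are uniform in $i$ here---and the three-region splitting in Step~2 correctly gives $|\phi_{ij}|\leq Cj^{-\tau}$ uniformly. That single bound already implies (\ref{eq_phijbound}); consequently your ``stationary approximation'' paragraph is both unnecessary and an incorrect account of where the $n$-power floors come from, and should be dropped. Your block-matrix identity in Step~3 is more transparent than the paper's one-line appeal to Yule--Walker plus the perturbation Lemma~\ref{lem_numerical}, and it relies on the pure $j^{-\tau}$ tail, which your route provides directly. The trade-off is that the paper stays within its cited toolkit (Demko--Moss--Smith for banded matrices), whereas your argument imports an external polynomial inverse-decay result not referenced in the paper.
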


To our knowledge, Proposition \ref{lem_borderapproximation} is the first result on the decay rate of best linear forecasting under nonstationarity. It serves the first dimension reduction for our parameter space. It states that we can treat $\phi_{ij}=0$ for $j>b.$ Hence, the number of coefficients needed for the Cholesky decomposition reduces from $O(n^2)$ to $O(nb).$ Finally, denote $\bm{\phi}^b(\frac{i}{n}):=(\phi_1(\frac{i}{n}), \cdots, \phi_b(\frac{i}{n}))$ by
\begin{equation} \label{eq_phigeqb}
\bm{\phi}^b(\frac{i}{n})=\widetilde{\Omega}^b_i \widetilde{\bm{\gamma}}^b_i,
\end{equation} 
where $\widetilde{\Omega}^b_i$ and $\widetilde{\bm{\gamma}}^b_i$ are defined as 
\begin{equation*}
\widetilde{\Omega}^b_i=[\operatorname{Cov}(\widetilde{\mathbf{x}}_{i},\widetilde{\mathbf{x}}_{i})]^{-1}, \ \widetilde{\bm{\gamma}}_i=\operatorname{Cov}(\widetilde{\mathbf{x}}_{i}, x_i),
\end{equation*}
with $\widetilde{\mathbf{x}}_{i,k}=G(\frac{i}{n},\mathcal{F}_{i-k}), \ k=1,2,\cdots,b.$  The following lemma shows that $\bm{\phi}^b_i$ can be well approximated by $\bm{\phi}^b(\frac{i}{n})$ when $i>b.$
\begin{lem}\label{lem_approxphi} Under Assumption \ref{assu_phylip}, there exists some constant $C>0,$ such that for all $j \leq b,$ 
\begin{equation*}
  \sup_{i>b} \left|\phi_{ij}-\phi_{j}(\frac{i}{n}) \right| \leq   Cn^{-1+2/\tau}.
\end{equation*} 
\end{lem}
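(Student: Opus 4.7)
The target is to bound, for each fixed $j \leq b$ and uniformly over $i > b$, the difference $\phi_{ij} - \phi_j(i/n)$. Proposition \ref{lem_borderapproximation} already supplies $\sup_i \|\bm{\phi}_i^b - \widetilde{\bm{\phi}}_i^b\| \leq Cn^{-2+1/\tau}$, which is much smaller than the claimed rate. So by the triangle inequality the remaining task is to bound
$$\widetilde{\bm{\phi}}_i^b - \bm{\phi}^b(i/n) \;=\; \Omega_i^b\bm{\gamma}_i^b - \widetilde{\Omega}_i^b\widetilde{\bm{\gamma}}_i^b,$$
i.e., to compare the Yule–Walker vector built from the true non-stationary covariances with the one built from the stationary approximation $\widetilde{\mathbf{x}}_{i,k}=G(i/n,\mathcal{F}_{i-k})$.

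The plan is to treat this as a perturbation problem at the covariance/precision level. First, for each $k\leq b$, write
$$x_{i-k} - \widetilde{x}_{i,k} \;=\; G\!\left(\tfrac{i-k}{n},\mathcal{F}_{i-k}\right) - G\!\left(\tfrac{i}{n},\mathcal{F}_{i-k}\right),$$
whose $\mathcal{L}_q$ norm is at most $Ck/n$ by the stochastic Lipschitz condition \eqref{assum_lip}. Combined with the uniform moment bound \eqref{assum_moment} and Cauchy–Schwarz, this yields the entrywise estimate $\bigl|(\Sigma_i^b)_{kl}-(\widetilde{\Sigma}_i^b)_{kl}\bigr| \leq C(k+l)/n$ for the covariance matrices $\Sigma_i^b=(\Omega_i^b)^{-1}$ and $\widetilde{\Sigma}_i^b=(\widetilde{\Omega}_i^b)^{-1}$, and analogously $\bigl|(\bm{\gamma}_i^b)_k-(\widetilde{\bm{\gamma}}_i^b)_k\bigr| \leq Ck/n$.

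Second, I would promote these entrywise bounds to an operator-norm bound by combining them with the polynomial covariance decay from Lemma \ref{lem_phy}. Specifically, for a fixed row $k$ I would split the column sum at the crossover $|k-l|\sim(n/b)^{1/\tau}$: inside this window use the Lipschitz bound $C(k+l)/n$, outside use the $|k-l|^{-\tau}$ decay. This maximum row–sum argument gives $\|\Sigma_i^b-\widetilde{\Sigma}_i^b\|_{op} \leq Cn^{-1+2/\tau}$ and similarly for $\|\bm{\gamma}_i^b-\widetilde{\bm{\gamma}}_i^b\|$. Third, I would invoke uniform spectral control: the polynomial decay of the autocovariances (Lemma \ref{lem_phy}) together with a Gershgorin-type argument shows that $\Sigma_i^b$ and $\widetilde{\Sigma}_i^b$ have eigenvalues bounded above and away from zero uniformly in $i$, so $\|\Omega_i^b\|_{op}, \|\widetilde{\Omega}_i^b\|_{op} = O(1)$.

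Finally, use the resolvent identity
$$\Omega_i^b - \widetilde{\Omega}_i^b \;=\; -\,\Omega_i^b\bigl(\Sigma_i^b-\widetilde{\Sigma}_i^b\bigr)\widetilde{\Omega}_i^b$$
and the decomposition
$$\Omega_i^b\bm{\gamma}_i^b - \widetilde{\Omega}_i^b\widetilde{\bm{\gamma}}_i^b \;=\; \Omega_i^b\bigl(\bm{\gamma}_i^b-\widetilde{\bm{\gamma}}_i^b\bigr) + \bigl(\Omega_i^b-\widetilde{\Omega}_i^b\bigr)\widetilde{\bm{\gamma}}_i^b,$$
to conclude that the $\ell^2$ (hence $\ell^\infty$) norm of this vector is $O(n^{-1+2/\tau})$. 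Combining with Proposition \ref{lem_borderapproximation} gives the claim. The delicate step is clearly Step 2: a naive bound $\|\Sigma_i^b-\widetilde{\Sigma}_i^b\|_{op} \leq Cb^2/n$ would only produce the exponent $n^{-1+4/\tau}$, so the sharper rate requires genuinely exploiting the covariance decay to balance the Lipschitz and tail contributions at the correct threshold.
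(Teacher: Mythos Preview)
Your approach matches the paper's almost exactly: the same Yule--Walker/resolvent decomposition
\[
\Omega_i^b\bm{\gamma}_i^b-\widetilde{\Omega}_i^b\widetilde{\bm{\gamma}}_i^b
=\Omega_i^b(\bm{\gamma}_i^b-\widetilde{\bm{\gamma}}_i^b)
+\Omega_i^b(\widetilde{\Gamma}_i^b-\Gamma_i^b)\widetilde{\Omega}_i^b\widetilde{\bm{\gamma}}_i^b,
\]
followed by Cauchy--Schwarz, uniform spectral bounds on $\Omega_i^b,\widetilde{\Omega}_i^b$, and Gershgorin (the paper's Lemma~\ref{lem_disc}) to control each piece, then an appeal to Proposition~\ref{lem_borderapproximation}. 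One caveat on your Step~2: with the entrywise bound $\min(Cb/n,\,C|k-l|^{-\tau})$ and the crossover $m=(n/b)^{1/\tau}$, the maximum row sum actually comes out as $O(mb/n)=O(n^{-1+3/\tau-2/\tau^2})$ rather than $O(n^{-1+2/\tau})$, so the threshold argument as written does not quite deliver the stated exponent; the paper is equally terse at this point, simply asserting $\lambda_{\max}\bigl((\widetilde{\Gamma}_i^b-\Gamma_i^b)^*(\widetilde{\Gamma}_i^b-\Gamma_i^b)\bigr)\le Cn^{-2+4/\tau}$ via Lemma~\ref{lem_disc} without spelling out the balancing.
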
 

Lemma \ref{lem_approxphi} claims that each off-diagonal $\{\phi_{ij}\}_{i=b}^n$ can be well-approximated by a smooth function $\phi_j(\cdot), $ it provides the second dimension reduction. Due to the smoothness of $\phi_j(\cdot),$ it can be well approximated by a sieve expansion of order $c,$ where $c \ll n$. This will further reduce the dimension of the parameter space  from $O(nb)$ to $O(bc)$. Throughout of the rest of the paper, unless otherwise specified, we will always use $b=O(n^{2/\tau}).$Recall that $\epsilon_i$ is the prediction error with variance $\sigma_i^2,$ 
\begin{equation} \label{defn_epsilon}
\epsilon_i=x_i-\widehat{x}_i.
\end{equation} 
We define $\widetilde{\epsilon}_i:=x_i-\sum_{j=1}^{\min(b,i-1)} \phi_{ij} x_{i-j}.$ First of all, we deduce from Proposition \ref{lem_borderapproximation} and Assumption  \ref{assu_phylip} that 
\begin{equation}\label{eq_reduceepsilon}
\max_{1 \leq i \leq n}|\epsilon_i-\widetilde{\epsilon}_i|=o(n^{-3}) \ \text{in probability}. 
\end{equation} 

 Next we summarize the basic properties of $\widetilde{\epsilon}_i$. Denote $\widetilde{\sigma}_i^2$ as the variance of $\{\widetilde{\epsilon}_i\}.$ 
\begin{lem}\label{lem_epsilon} 
We have $\sup_i \widetilde{\sigma}_i^2<\infty.$  Furthermore, denote the physical dependence measure of $\{\widetilde{\epsilon}_i\}$ as $\delta^{\epsilon}(j,q),$ then there exists some constant $C>0,$ such that for $\delta^{\epsilon}(j,q) \leq Cj^{-\tau}.$
\end{lem}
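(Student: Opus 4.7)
The lemma makes two assertions about $\widetilde{\epsilon}_i := x_i - \sum_{k=1}^{\min(b,i-1)} \phi_{ik} x_{i-k}$, and I would handle them in sequence, leveraging the decay estimates on $\phi_{ik}$ from Proposition \ref{lem_borderapproximation} and the physical dependence of the underlying process $\{x_i\}$.

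For the uniform moment bound $\sup_i \widetilde{\sigma}_i^2<\infty$, I would apply Minkowski's inequality to obtain
\[
\|\widetilde{\epsilon}_i\|_2 \le \|x_i\|_2 + \sum_{k=1}^{\min(b,i-1)} |\phi_{ik}|\,\|x_{i-k}\|_2.
\]
Assumption \ref{assu_phylip} (specifically (\ref{assum_moment})) gives $\sup_i\|x_i\|_q<\infty$, so $\sup_i\|x_i\|_2<\infty$. The key ingredient is the uniform summability $\sup_i\sum_{k=1}^b|\phi_{ik}|<\infty$, which follows from Proposition \ref{lem_borderapproximation}: the polynomial piece $Ck^{-\tau}$ is summable because $\tau>10$, and the residual piece contributes at most $b\cdot Cn^{-2+3/\tau}=Cn^{-2+5/\tau}=o(1)$ under $b=O(n^{2/\tau})$. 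For the complementary range $i\le b$, $\widetilde{\epsilon}_i$ coincides with the genuine best linear predictor residual $\epsilon_i$ from (\ref{defn_bestlinear}), so $\widetilde{\sigma}_i^2=\sigma_i^2\le\operatorname{Var}(x_i)<\infty$ by the orthogonality of least squares projection.

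For the physical dependence bound, I would introduce the coupled process $\widetilde{\epsilon}_i^{(j)}$ obtained by replacing the innovation $\eta_{i-j}$ with an independent copy $\eta'_{i-j}$. Minkowski gives
\[
\|\widetilde{\epsilon}_i-\widetilde{\epsilon}_i^{(j)}\|_q \le \|x_i-x_i^{(j)}\|_q + \sum_{k=1}^{\min(b,i-1)} |\phi_{ik}|\,\|x_{i-k}-x_{i-k}^{(j)}\|_q.
\]
The first term is bounded by $\delta(j,q)\le Cj^{-\tau}$ directly from Assumption \ref{assu_phylip}. Since $\eta_{i-j}$ sits $j-k$ steps back in the filtration of $x_{i-k}$, one has $\|x_{i-k}-x_{i-k}^{(j)}\|_q\le\delta(j-k,q)\le C(j-k)^{-\tau}$ for $k<j$; the coupling is trivially bounded by $\delta(0,q)\le 2\sup_t\max_i\|G(t,\mathcal{F}_i)\|_q<\infty$ for $k=j$; and the summand vanishes for $k>j$ because $\eta_{i-j}$ lies in the future of $x_{i-k}$.

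The crux is to bound the convolution-type quantity $S_{ij}:=\sum_{k=1}^{\min(j-1,b)}|\phi_{ik}|(j-k)^{-\tau}$ by $Cj^{-\tau}$. I would split at $k=j/2$: on $\{k\le j/2\}$ use $(j-k)^{-\tau}\le 2^\tau j^{-\tau}$ together with the uniform summability of $|\phi_{ik}|$ from the first step, yielding $Cj^{-\tau}$; on $\{j/2<k<j\}$ apply the polynomial decay $|\phi_{ik}|\le Ck^{-\tau}\le 2^\tau Cj^{-\tau}$ from Proposition \ref{lem_borderapproximation} and the convergence of $\sum_m m^{-\tau}$. The isolated $k=j$ term $|\phi_{ij}|\cdot\delta(0,q)$ is absorbed into $Cj^{-\tau}$ by the same Proposition. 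The main obstacle is the second half of the split when $j$ is comparable to $b$, where the $n$-dependent residual $n^{-2+3/\tau}$ in the bound on $|\phi_{ik}|$ could a priori exceed $j^{-\tau}$; I would circumvent this by invoking the sharper approximation $|\phi_{ik}-\phi_k(i/n)|\le Cn^{-1+2/\tau}$ of Lemma \ref{lem_approxphi} together with a polynomial-decay bound $\sup_t|\phi_k(t)|\le Ck^{-\tau}$ obtained from inverting the Toeplitz system (\ref{eq_phigeqb}) using the decay of $\gamma(t,j)$ guaranteed by Lemma \ref{lem_phy} and standard bounded-spectral-density arguments.
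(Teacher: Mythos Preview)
Your proof is correct and, for the physical dependence bound, follows the same strategy as the paper: apply Minkowski to the definition of $\widetilde{\epsilon}_i$, reduce to the convolution-type sum $\sum_{k<j}|\phi_{ik}|(j-k)^{-\tau}$, and control it via the decay of $|\phi_{ik}|$ from Proposition~\ref{lem_borderapproximation}. Your splitting at $k=j/2$ and your explicit discussion of the $n$-dependent residual in that proposition are in fact more careful than the paper's rather terse treatment, which simply invokes Proposition~\ref{lem_borderapproximation} together with the integral approximation $\sum_{k=1}^{j-1}(j-k)^{-\tau}\sim 2\int_1^{j/2}(j-x)^{-\tau}\,dx$ and leaves the details implicit.

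For the variance bound the two arguments genuinely differ. The paper observes that $(\epsilon_1,\dots,\epsilon_n)^*=\Phi(x_1,\dots,x_n)^*$, whence $\operatorname{diag}(\sigma_1^2,\dots,\sigma_n^2)=\Phi\Gamma\Phi^*$ and $\sup_i\sigma_i^2=\lambda_{\max}(\Phi\Gamma\Phi^*)\le \lambda_{\max}(\Phi\Phi^*)\lambda_{\max}(\Gamma)$, with both factors bounded via Gershgorin (Lemma~\ref{lem_disc}) using the decay of $\phi_{ij}$ and $\gamma(t,j)$; the passage to $\widetilde{\sigma}_i^2$ is then handled by (\ref{eq_reduceepsilon}). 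Your direct Minkowski argument is more elementary and avoids the spectral machinery altogether, at the modest cost of re-deriving the summability $\sup_i\sum_k|\phi_{ik}|<\infty$---which the paper's route also implicitly needs for the Gershgorin step on $\Phi\Phi^*$. Either argument is perfectly adequate here.
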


\begin{rem} In this paper, we focus on the discussion when the physical dependence measure is of polynomial decay, i.e. (\ref{assum_phy}) holds true. However, all our results can be extended to the case when the short-range dependence is of exponential decay, i.e. 
\begin{equation}\label{eq_phyexp}
\delta(j,q) \leq C a^{j}, \ 0<a<1.
\end{equation}  
In detail, Lemma \ref{lem_phy} can be changed to 
\begin{equation*}
\sup_t |\gamma(t,j)|<Ca^{j}, \ j \geq 1.
\end{equation*}
Therefore, we only need to choose $b=O(\log n).$  As a consequence, Proposition \ref{lem_borderapproximation} can be updated to 
\begin{equation*} 
\sup_{i>b}|\phi_{ij}| \leq \max\{Cn^{-C}, \ Ca^{j} \}, \
\sup_i \left|\left|\bm{\phi}_i^b-\widetilde{\bm{\phi}}_i^b\right|\right| \leq Cn^{-C},
\end{equation*} 
where $C>1$ is some constant. Similarly, Lemma \ref{lem_approxphi} can be modified to 
\begin{equation*}
  \sup_{i>b} \left|\phi_{ij}-\phi_{j}(\frac{i}{n}) \right| \leq \frac{C\log n}{n} ,  \ \text{for all} \ j \leq b. 
\end{equation*} 
Finally, the analog of Lemma \ref{lem_epsilon} is  $\delta^{\epsilon}(j,q) \leq C \max\{ a^{j}, n^{-C}\}.$ 
\end{rem}

\section{Estimation of precision matrices}\label{sec_est} As shown in (\ref{estimationeq}), Proposition \ref{lem_borderapproximation} and Lemma  \ref{lem_approxphi}, in order to estimate $\Omega,$ it suffices to estimate $\phi_{ij}, i \leq b$, $\phi_j(\frac{i}{n}), i>b \geq j$ and the 
 variances of the residuals. When $i>b,$ by (\ref{assum_moment}) and Proposition \ref{lem_borderapproximation}, it is easy to see that 
 \begin{equation*}
 \sup_{i}\left|\sum_{j=b+1}^{i-1} \phi_{ij}x_{i-j} \right|=o(n^{-1}) \ \text{in probability}. 
 \end{equation*}
Therefore, we now simply write 
\begin{equation}\label{defn_cho_borderapp}
x_i=\sum_{j=1}^{b} \phi_{ij}x_{i-j}+\epsilon_i+o_{u}(n^{-1}), \ i=b+1, \cdots, n, 
\end{equation}
where $X_i=o_{u}(n^{-1})$ means $nX_i$ converges to zero in probability uniformly for $i>b.$

\subsection{Estimating $\phi_{ij}$ for  $i > b$}\label{sec_3sub1} We first estimate the time-varying coefficients $\phi_j(\frac{i}{n})$ using the method of sieves \cite{BCCK,CXH,CC} when $i > b$. 
 We first observe the following result, whose proof will be put into Appendix \ref{appendix_a}.
\begin{lem} \label{lem_smoothphijt}
Under Assumption \ref{assu_phylip} and \ref{assu_smmothness}, for any $j\leq b$, we have that $\phi_j(t) \in C^d([0,1]).$ 
\end{lem}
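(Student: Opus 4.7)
The plan is to give an explicit formula for $\phi_j(t)$ in terms of the autocovariance function $\gamma(t,\cdot)$ and then invoke Assumption \ref{assu_smmothness} componentwise. First I would observe that since $\widetilde{\mathbf{x}}_{i,k}=G(i/n,\mathcal{F}_{i-k})$ and $x_i=G(i/n,\mathcal{F}_i)$ share the same time label $t=i/n$, the defining relation (\ref{eq_phigeqb}) collapses, with $t=i/n$, to the purely stationary normal equations $\bm{\phi}^b(t)=\Gamma(t)^{-1}\bm{\gamma}(t)$, where $\Gamma(t)$ is the $b\times b$ symmetric Toeplitz matrix with entries $\Gamma(t)_{k,l}=\gamma(t,|k-l|)$ and $\bm{\gamma}(t)=(\gamma(t,1),\ldots,\gamma(t,b))^*$. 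Cramer's rule then yields $\phi_j(t)=\det(\Gamma_j(t))/\det(\Gamma(t))$, where $\Gamma_j(t)$ is obtained from $\Gamma(t)$ by replacing its $j$-th column with $\bm{\gamma}(t)$.

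Next I would exploit the regularity already available. By Assumption \ref{assu_smmothness}, every entry of $\Gamma(t)$ and of $\bm{\gamma}(t)$ belongs to $C^d([0,1])$. Since $C^d([0,1])$ is closed under finite sums and products and determinants are polynomial in the matrix entries, both $\det(\Gamma(t))$ and $\det(\Gamma_j(t))$ lie in $C^d([0,1])$. Iterating the quotient rule $d$ times then delivers $\phi_j(t)\in C^d([0,1])$, \emph{provided} the denominator stays bounded away from zero on $[0,1]$.

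The one substantive step, and the main obstacle worth spelling out, is to certify that $\inf_{t\in[0,1]}|\det(\Gamma(t))|>0$. I would argue as follows. For each fixed $t$, $\Gamma(t)$ is the $b\times b$ autocovariance matrix of the stationary sequence $\{G(t,\mathcal{F}_k)\}_k$, so it is positive semi-definite; the non-degeneracy condition implicitly underpinning (\ref{eq_phigeqb}) (namely, the existence of $\widetilde{\Omega}^b_i$) forces $\Gamma(t)$ to be strictly positive definite for every $t$. Because $t\mapsto\det(\Gamma(t))$ is continuous on the compact interval $[0,1]$ (being a polynomial in the continuous entries of $\Gamma(t)$) and is strictly positive pointwise, it attains a positive minimum. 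Combined with the previous paragraph, this yields $\phi_j(t)\in C^d([0,1])$ for every $j\leq b$, as claimed.
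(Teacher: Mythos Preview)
Your proposal is correct and follows essentially the same route as the paper: both arguments write $\phi_j(t)=\det(\Gamma_j(t))/\det(\Gamma(t))$ via Cramer's rule, observe that the entries of $\Gamma(t)$ and $\bm{\gamma}(t)$ are $C^d$ by Assumption~\ref{assu_smmothness}, and conclude by the closure of $C^d$ under polynomial operations and division by a nonvanishing $C^d$ function. Your treatment of the denominator---invoking continuity of $\det(\Gamma(t))$ on the compact interval $[0,1]$ together with pointwise positive definiteness to obtain a uniform positive lower bound---is in fact cleaner than the paper's, which simply asserts non-singularity and then detours through a normalization by the diagonal product $\Delta=\prod_k\Gamma(t)_{kk}$ whose role in the qualitative $C^d$ claim is not transparent.
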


 Based on the above lemma, we use
\begin{equation}\label{sieve_eq}
\theta_j(\frac{i}{n}):=\sum_{k=1}^c a_{jk} \alpha_k(\frac{i}{n}), \ j \leq b,
\end{equation} 
to approximate $\phi_j(\frac{i}{n}),$ where $\{\alpha_k(t)\}$ is a set of pre-chosen orthogonal basis functions on $[0,1]$ and $c \equiv c(n)$ stands for the number of basis functions.  In the present paper, unless otherwise specified,  we always set $c=O(n^{\alpha_1}).$  The estimate of $\theta_j(t)$ boils down to the estimation of the $a_{jk}'s.$ Next, the results of the convergent rate on the approximation (\ref{sieve_eq}) can be found in \cite[Section 2.3]{CXH} for the commonly used basis functions, where we summarize it in the following lemma.  
\begin{lem} \label{lem_sievebasisbound}
Denote the sup-norm with respect to Lebesgue measure as 
\begin{equation*}
 \mathcal{L}_{\infty}:=\sup_{t \in [0,1]}\left| \phi_j(t)-\theta_j(t) \right|. 
\end{equation*}
We then have that $\mathcal{L}_{\infty}=O(c^{-d})$ for the orthogonal polynomials, trigonometric polynomials, spline series with order $r$ when $r \geq d+1,$ and orthogonal wavelets with degree $m$ when $m>d. $
\end{lem}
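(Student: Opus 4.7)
My plan is to reduce the statement to standard approximation-theoretic results applied to the regularity of $\phi_j$. The key input is Lemma~\ref{lem_smoothphijt}, which already gives $\phi_j \in C^d([0,1])$ under Assumptions~\ref{assu_phylip} and~\ref{assu_smmothness}. Once this smoothness is in hand, I would take the coefficients $\{a_{jk}\}_{k=1}^c$ in (\ref{sieve_eq}) to be the best sup-norm approximants from the linear span of $\{\alpha_1,\dots,\alpha_c\}$ (or, in the orthogonal expansion cases, the truncated expansion coefficients), so that verifying the claim reduces entirely to the known uniform approximation power of each sieve on $C^d([0,1])$.

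Then I would dispatch the four families separately. For orthogonal polynomials, I would invoke Jackson's theorem on a compact interval: there exists a polynomial of degree $c$ within $Cc^{-d}\|\phi_j^{(d)}\|_\infty$ in sup-norm. For trigonometric polynomials, I would first construct a $C^d$ periodic extension of $\phi_j$ (for instance by reflection followed by a smooth cutoff, which preserves $C^d$ since the domain is compact) and then apply the Jackson-type inequality for Fourier partial sums to obtain the same $O(c^{-d})$ rate. For B-splines of order $r \geq d+1$, I would appeal to the quasi-interpolation estimate of de~Boor on a uniform partition with $c$ knots; the assumption $r \geq d+1$ is precisely what allows local reproduction of polynomials of degree $d$, so that a Bramble--Hilbert type argument yields the stated rate. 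For orthogonal wavelets of degree $m>d$, I would invoke Cohen's theorem: the vanishing-moment order $m>d$ combined with the $C^d$-regularity of $\phi_j$ delivers $\mathcal{L}^\infty$ approximation of order $c^{-d}$ from the multiresolution subspace of dimension $c$. All of these are consolidated in \cite[Section 2.3]{CXH}, which is what one would ultimately cite.

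The main technical nuisance shared by all four cases will be controlling the approximation in the sup-norm rather than the easier $\mathcal{L}^2$ norm, and doing so with a constant that is uniform in $j \leq b$. The sup-norm passage rests on standard bounds on Lebesgue constants or on the uniform boundedness of the associated projection/quasi-interpolation operators, which are classical for polynomials, trigonometric bases, splines, and compactly supported orthonormal wavelets of sufficient regularity. The uniformity in $j$ is easier: since $\phi_j(\cdot)$ is obtained from smooth functionals of the locally stationary autocovariances $\gamma(\cdot,k)$, whose $C^d$-norms are controlled by Assumption~\ref{assu_smmothness} (cf.\ the proof of Lemma~\ref{lem_smoothphijt}), the quantity $\sup_{j\le b}\|\phi_j^{(d)}\|_\infty$ is finite, and the $O(c^{-d})$ bound carries over with a single constant. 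Taking the supremum over $t\in[0,1]$ then completes the proof.
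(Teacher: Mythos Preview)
Your proposal is correct and in fact more detailed than what the paper does: the paper does not prove this lemma at all but simply states that the rates ``can be found in \cite[Section 2.3]{CXH}'' and records them without argument. Your sketch of reducing to $\phi_j\in C^d([0,1])$ via Lemma~\ref{lem_smoothphijt} and then invoking Jackson-type bounds for each sieve family is exactly the content of that reference, so you are on the same route, just with the intermediate steps spelled out. The additional remark on uniformity in $j\le b$ goes slightly beyond the lemma as stated (which is for a fixed $j$), but it is a sensible observation since the paper implicitly uses such uniformity downstream.
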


Then we impose the following regularity condition on  the basis functions. 
\begin{assu} \label{assu_basisregularity}
Let $\otimes$ be the Kronecker product.
For any $k=1,2,\cdots, b, $ denote $\Sigma^k(t) \in \mathbb{R}^{k \times k}$ via $\Sigma^k_{ij}(t)=\gamma(t, |i-j|),$ we assume that the eigenvalues of 
$$ \int_0^1 \Sigma^k(t) \otimes \left( \mathbf{b}(t) \mathbf{b}^*(t) \right)dt,$$
are bounded above and also away from zero by a constant $\kappa>0$,  
where $\mathbf{b}(t)=(\alpha_1(t),\cdots, \alpha_c(t))^* \in \mathbb{R}^c.$ 
\end{assu} 
Since $\Sigma^k(t) \otimes (\mathbf{b}(t) \mathbf{b}^*(t))$ is positive semidefinite for any $t \in [0,1],$ Assumption \ref{assu_basisregularity} is mild.  
 We next provide some comments on how to check Assumption \ref{assu_basisregularity}. It is clear that when $x_i$ is a stationary process, the assumption will hold immediately due to the orthonormality of the basis functions. We next consider locally stationary MA($q$) process of the form  
 \begin{equation} \label{eqmaq}
G(t, \mathcal{F}_i)=\sum_{j=1}^q a_j(t) \epsilon_{i-j}+\epsilon_i, \   1 \leq q \leq \infty,
\end{equation}
where $\epsilon_i$ are i.i.d centered random variables with variance $1.$ The following lemma shows that under suitable conditions, Assumption \ref{assu_basisregularity} holds true for (\ref{eqmaq}). We leave its proof to Appendix \ref{appendix_a}.
\begin{lem}\label{lem_checkinginvert} Suppose that  the assumptions of Examples \ref{exam_linear} hold for (\ref{eqmaq}) and 
\begin{equation}\label{eq_mainvert}
\sup_t \sum_{i=1}^q |a_j(t)|<1.
\end{equation}
Then Assumption \ref{assu_basisregularity} holds  for (\ref{eqmaq}) and any orthonormal basis functions. 
\end{lem}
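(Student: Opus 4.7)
The plan is to reduce the claim to a uniform (in $t$ and $k$) spectral bound on the Toeplitz-type matrix $\Sigma^k(t)$ and then exploit orthonormality of the basis $\{\alpha_j\}$ to handle the Kronecker integral. First, I would note that for each fixed $t\in[0,1]$, freezing the time index makes $\{G(t,\mathcal{F}_i)\}$ a stationary MA process with coefficients $a_j(t)$ (set $a_0(t)=1$), so $\Sigma^k(t)$ is a real symmetric Toeplitz matrix whose generating symbol (spectral density) is
\begin{equation*}
f(t,\lambda) = \frac{1}{2\pi}\Bigl|1+\sum_{j=1}^{q} a_j(t)e^{-ij\lambda}\Bigr|^2.
\end{equation*}
Under the hypothesis $\sup_t \sum_{j=1}^q |a_j(t)| =: \rho < 1$ (with absolute summability in the case $q=\infty$ inherited from Example \ref{exam_linear}), the triangle inequality yields $(1-\rho)^2 \le 2\pi f(t,\lambda) \le (1+\rho)^2$ uniformly in $(t,\lambda)$. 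By the classical Grenander--Szeg\H{o} bound for Toeplitz matrices, the eigenvalues of $\Sigma^k(t)$ therefore lie in $[c_1, c_2]$ with $c_1,c_2>0$ depending only on $\rho$, uniformly in both $k$ and $t$.

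Next, for the Kronecker integral, I would pass to a quadratic form. Fix $\mathbf{u}\in\mathbb{R}^{kc}$ with components $u_{ij}$, $1\le i\le k$, $1\le j\le c$, and define $U_i(t)=\sum_{j=1}^{c} u_{ij}\alpha_j(t)$ and $U(t)=(U_1(t),\ldots,U_k(t))^*$. A direct expansion of the Kronecker product gives
\begin{equation*}
\mathbf{u}^* \Bigl(\int_0^1 \Sigma^k(t)\otimes\mathbf{b}(t)\mathbf{b}^*(t)\, dt\Bigr)\mathbf{u} = \int_0^1 U(t)^* \Sigma^k(t) U(t)\, dt.
\end{equation*}
Applying the uniform eigenvalue bounds on $\Sigma^k(t)$ sandwiches this by $c_1\int_0^1 \|U(t)\|^2 dt$ and $c_2\int_0^1 \|U(t)\|^2 dt$. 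Finally, orthonormality of $\{\alpha_j\}$ on $[0,1]$ gives $\int_0^1 U_i(t)^2 dt = \sum_{j=1}^c u_{ij}^2$, so $\int_0^1 \|U(t)\|^2 dt = \|\mathbf{u}\|^2$. This proves the desired two-sided eigenvalue bound with $\kappa=\min(c_1, c_2^{-1})$ (or equivalent).

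The main obstacle is verifying the uniform Toeplitz spectral bound in the possibly-infinite-$q$ case: one must ensure that $f(t,\lambda)$ is genuinely bounded and bounded away from zero uniformly in $t$, which requires the absolute summability assumption baked into Example \ref{exam_linear} together with the strict inequality $\rho<1$. Once this is in place, the remainder of the argument is an algebraic manipulation exploiting the bilinearity of the Kronecker product and Parseval's identity for the orthonormal basis, and no additional structure of $\{\alpha_j\}$ is used beyond orthonormality—so the conclusion indeed holds for any orthonormal basis, as stated.
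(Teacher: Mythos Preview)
Your proposal is correct and takes a genuinely different route from the paper's proof. Both arguments share the same skeleton: (i) show that $\Sigma^k(t)$ has eigenvalues bounded above and below uniformly in $k$ and $t$; (ii) pass this through the Kronecker integral using orthonormality of $\{\alpha_j\}$. For step (ii), the paper simply remarks that since $\mathbf{b}(t)\mathbf{b}^*(t)$ is rank-one positive semidefinite with $\int_0^1 \mathbf{b}(t)\mathbf{b}^*(t)\,dt=I$, positive definiteness of $\Sigma^k(t)$ suffices; your explicit quadratic-form computation with $U(t)$ makes this transparent and is essentially the same idea fleshed out.

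The real difference is in step (i). The paper argues via Gershgorin (Lemma~\ref{lem_disc}), claiming that under $\sup_t\sum_j|a_j(t)|<1$ the matrix $\Sigma^k(t)$ is strictly diagonally dominant. You instead invoke the spectral density $f(t,\lambda)=(2\pi)^{-1}\bigl|1+\sum_j a_j(t)e^{-ij\lambda}\bigr|^2$ and the Grenander--Szeg\H{o} bound, obtaining $(1-\rho)^2\le 2\pi f(t,\lambda)\le (1+\rho)^2$ directly from the triangle inequality. Your approach is not only more standard for Toeplitz matrices but also more robust: strict diagonal dominance can in fact fail under the stated hypothesis (for instance, in the MA(2) case with $a_1(t)=a_2(t)=0.4$ one has $\gamma(t,0)=1.32$ while the off-diagonal row sum in a middle row is $2(0.56+0.4)=1.92$), so the Gershgorin route as written does not close. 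The spectral-density argument sidesteps this entirely and yields the uniform two-sided bound with explicit constants depending only on $\rho$, covering the $q=\infty$ case as well once absolute summability is in place.
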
 
 Finally, since locally stationary process has an locally MA(q) approximation, we are able to check the above assumption by studying its MA approximation.

Next we impose the following mild assumption on the parameters. 

\begin{assu}\label{assu_parameter}
We assume that for $\tau$ defined in (\ref{assum_phy}), $d$ defined in Assumption \ref{assu_smmothness} and $\alpha_1$, there exists a constant $C>4,$ such that
\begin{equation*}
\frac{C}{\tau}+d\alpha_1<1.
\end{equation*}
\end{assu} 

Note that the above assumption can be easily satisfied by choosing $C<\tau$ and $\alpha_1$ accordingly. In the case when the physical dependence is of exponentially decay, we only need $d \alpha_1<1.$

We now estimate $\phi_{ij}.$ Under Assumption \ref{assu_parameter}, by  (\ref{defn_cho_borderapp}), (\ref{sieve_eq}) and Lemma \ref{lem_sievebasisbound}, we can now write
\begin{equation}\label{defn_forest}
x_i=\sum_{j=1}^b \sum_{k=1}^c a_{jk} z_{kj}(\frac{i}{n})+\epsilon_i+o_{u}(n^{-1}), \ i=b+1,\cdots, n, 
\end{equation}
where $z_{kj}(\frac{i}{n}):=\alpha_k(\frac{i}{n})x_{i-j}.$ 
In view of (\ref{defn_forest}), we can use the ordinary least square (OLS) method to estimate the coefficients $a_{jk}.$ Denote the vector $\bm{\beta} \in \mathbb{R}^{bc}$ with $\bm{\beta}_s=a_{j_s,k_s},$ where $j_s=\lfloor \frac{s}{c} \rfloor+1, \ k_s=s-\lfloor \frac{s}{c}\rfloor \times c.$ Similarly, we define $\mathbf{y}_i \in \mathbb{R}^{bc}$ by letting $\mathbf{y}_{is}=z_{k_s,j_s}(\frac{i}{n}).$ Furthermore, we denote $Y^*$ as the $bc \times (n-b)$ design matrix of (\ref{defn_forest}) whose columns are $\mathbf{y}_i, \ i=b+1, \cdots,n.$ We also define by $\mathbf{x} \in \mathbb{R}^{n-b}$ the vector of $x_{b+1}, \cdots, x_n.$ Hence, the OLS estimator for $\bm{\beta}$ can be written as 
\begin{equation*}
\widehat{\bm{\beta}}=(Y^*Y)^{-1} Y^* \mathbf{x}.
\end{equation*}

Moreover, recall $\mathbf{x}_i=(x_{i-1}, \cdots, x_{i-b})^* \in \mathbb{R}^b,$ denote $X=(\mathbf{x}_{b+1}, \cdots, \mathbf{x}_n) \in \mathbb{R}^{b \times (n-b)}$ and the matrices $E_i \in \mathbb{R}^{(n-b) \times (n-b)}$ such that $(E_i)_{st}=1,$ when $s=t=i-b$ and $(E_i)_{st}=0$ otherwise. As a consequence, we can write 
\begin{equation}\label{y_kronecker}
Y^*=\sum_{i=b+1}^n \left (X \otimes \mathbf{b}(\frac{i}{n}) \right) E_i,
\end{equation} 
Observe that 
\begin{equation}\label{beta_est}
\widehat{\bm{\beta}}=\bm{\beta}+\left(\frac{Y^*Y}{n} \right)^{-1}\frac{Y^* \bm{\epsilon}}{n}+o_{\mathbb{P}}(n^{-1}),
\end{equation}
where $\bm{\epsilon} \in \mathbb{R}^{n-b}$ consists of $\epsilon_{b+1}, \cdots, \epsilon_n$ and the error is entrywise. We decompose $\bm{\beta}$ into $b$ blocks by denoting $\bm{\beta}=(\bm{\beta}^*_1, \cdots, \bm{\beta}_b^*)^*,$ where each $\bm{\beta}_i \in \mathbb{R}^c.$ Similarly, we can decompose $\widehat{\bm{\beta}}.$ Therefore, our sieve estimator can be written as $\widehat{\phi}_j(\frac{i}{n})=\bm{\widehat{\beta}}_j^* \mathbf{b}(\frac{i}{n})$ and it satisfies that 
\begin{equation}\label{phi_diff}
\phi_j(\frac{i}{n})-\widehat{\phi}_j(\frac{i}{n})=(\bm{\beta}_j-\widehat{\bm{\beta}}_j)^*\mathbf{b}(\frac{i}{n}).
\end{equation}

We impose the following assumption on the derivative of the basis functions, which is also used in \cite[Assumption 4]{CC}. 
\begin{assu} \label{assu_derivativebasis} There exist $\omega_1, \omega_2 \geq 0,$ we have 
\begin{equation*}
\sup_t || \nabla \mathbf{b}(t) || \leq C n^{\omega_1} c^{\omega_2}, \ C>0 \ \text{is some constant}. 
\end{equation*}
\end{assu}
The above assumption is a mild regularity condition on the sieve basis functions and is satisfied by many of the widely used basis functions. For instance, we can choose $\omega_1=0, \ \omega_2=\frac{1}{2}$ for trigonometric polynomials, spline series, orthogonal wavelets and weighted Chebyshev polynomials. For more examples of basis functions satisfying this assumption, we refer to \cite[Section 2.1]{CC}. Finally, we impose the following mild assumption on the parameters.

\begin{thm} \label{thm_timevaryingcoeff} Under Assumption \ref{assu_phylip}, \ref{assu_smmothness}, \ref{assu_basisregularity}, \ref{assu_parameter} and \ref{assu_derivativebasis}, we have
\begin{equation*}
\sup_{i>b,j \leq b} \left| \phi_j(\frac{i}{n})-\widehat{\phi}_j(\frac{i}{n}) \right|=O_{\mathbb{P}}\left(\zeta_c \sqrt{\frac{\log n}{n}}+n^{-d\alpha_1}\right). 
\end{equation*}
\end{thm}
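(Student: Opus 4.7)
The plan is to split the error into a deterministic sieve approximation bias and a stochastic OLS estimation error, and to bound each piece separately. Let $\theta_j(t)=\bm{\beta}_j^*\mathbf{b}(t)$ denote the sieve approximant of $\phi_j(t)$ from (\ref{sieve_eq}), and write
\[
\phi_j(\tfrac{i}{n})-\widehat{\phi}_j(\tfrac{i}{n})=\bigl[\phi_j(\tfrac{i}{n})-\theta_j(\tfrac{i}{n})\bigr]+\bigl[\bm{\beta}_j-\widehat{\bm{\beta}}_j\bigr]^{*}\mathbf{b}(\tfrac{i}{n}).
\]
The first piece is uniformly $O(c^{-d})=O(n^{-d\alpha_1})$ for every $j\le b$ by Lemma \ref{lem_smoothphijt} together with Lemma \ref{lem_sievebasisbound}; this already accounts for the second rate in the statement. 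For the second piece, Cauchy--Schwarz gives
\[
\sup_{i>b,\,j\le b}\bigl|(\bm{\beta}_j-\widehat{\bm{\beta}}_j)^{*}\mathbf{b}(\tfrac{i}{n})\bigr|\le \zeta_c\,\sup_{j\le b}\|\widehat{\bm{\beta}}_j-\bm{\beta}_j\|\le \zeta_c\,\|\widehat{\bm{\beta}}-\bm{\beta}\|,
\]
so the problem reduces to showing $\|\widehat{\bm{\beta}}-\bm{\beta}\|=O_{\mathbb{P}}(\sqrt{\log n/n})$.

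Starting from (\ref{beta_est}), I write $\widehat{\bm{\beta}}-\bm{\beta}=(Y^{*}Y/n)^{-1}\bigl(Y^{*}\bm{\epsilon}/n+Y^{*}\mathbf{r}/n\bigr)+o_{\mathbb{P}}(n^{-1})$, where the additional residual vector $\mathbf{r}$ absorbs the sieve-bias term $\sum_j(\phi_j(i/n)-\theta_j(i/n))x_{i-j}$ coming from (\ref{defn_forest}) and whose sup-norm is controlled by $bc^{-d}$. The Gram matrix has the Kronecker decomposition
\[
\frac{Y^{*}Y}{n}=\frac{1}{n}\sum_{i=b+1}^{n}(\mathbf{x}_i\mathbf{x}_i^{*})\otimes(\mathbf{b}(\tfrac{i}{n})\mathbf{b}(\tfrac{i}{n})^{*}),
\]
so by local stationarity and Lemma \ref{lem_phy}, its expectation is close to the population integral $\int_0^1 \Sigma^b(t)\otimes(\mathbf{b}(t)\mathbf{b}(t)^{*})dt$, whose eigenvalues are bounded below and above by Assumption \ref{assu_basisregularity}. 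To upgrade this to an almost-sure operator-norm bound on $(Y^{*}Y/n)^{-1}$, I plan to apply a matrix Bernstein/Nagaev-type inequality for sums of weakly dependent random matrices, using the polynomial physical dependence from Assumption \ref{assu_phylip} together with an $m$-dependent approximation (the kind routinely invoked for this class in \cite{WZ1}). This gives $\|(Y^{*}Y/n)^{-1}\|_{\mathrm{op}}=O_{\mathbb{P}}(1)$ under Assumption \ref{assu_parameter}, which makes $bc$ small enough relative to $n$.

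For the noise term $Y^{*}\bm{\epsilon}/n=n^{-1}\sum_i(\mathbf{x}_i\otimes\mathbf{b}(i/n))\epsilon_i$, the key observation is that $\epsilon_i$ is orthogonal by construction to $\mathbf{x}_i$, so each summand is (up to the negligible error in (\ref{eq_reduceepsilon})) essentially a martingale-type difference. Combined with the moment control of Assumption \ref{assu_phylip}, Lemma \ref{lem_epsilon}, and the boundedness $\sup_t\|\mathbf{b}(t)\|=\zeta_c$, I will use a Bernstein/Nagaev maximal inequality over the $bc$ coordinates to obtain $\|Y^{*}\bm{\epsilon}/n\|_\infty=O_{\mathbb{P}}(\zeta_c\sqrt{\log n/n}/\sqrt{c})$, hence $\|Y^{*}\bm{\epsilon}/n\|=O_{\mathbb{P}}(\sqrt{\log n/n})$ after the $bc$-dimensional norm conversion (with logarithmic slack coming from the union bound). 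The $Y^{*}\mathbf{r}/n$ piece is handled deterministically by the sieve bias bound from Lemma \ref{lem_sievebasisbound}, contributing at the same $n^{-d\alpha_1}$ rate already present in the theorem.

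The main obstacle I anticipate is the concentration of the $bc\times bc$ Gram matrix $Y^{*}Y/n$ around its expectation: this requires proving that a sum of dependent, non-stationary, diverging-dimensional rank-one matrices stays close to its mean in operator norm, which is exactly the place where Assumption \ref{assu_parameter} (bounding $d\alpha_1+C/\tau<1$) is used to ensure $bc$ does not overwhelm the effective sample size. A secondary but more routine difficulty is carrying the physical-dependence machinery through the coupling/$m$-dependence step needed for both the matrix and vector concentration inequalities in a uniform-in-$(i,j)$ fashion.
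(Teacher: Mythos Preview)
Your bias/variance split and the treatment of the Gram matrix are essentially what the paper does: it too shows $\|S-\Sigma\|=O_{\mathbb P}(\zeta_c\sqrt{\log n/n})$ via an $m$-dependent approximation combined with matrix Bernstein, and hence $\|(Y^*Y/n)^{-1}\|=O_{\mathbb P}(1)$.

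The genuine gap is in your handling of the stochastic term. The claim $\|\widehat{\bm\beta}-\bm\beta\|=O_{\mathbb P}(\sqrt{\log n/n})$ is false. Since $\sqrt{n}(\widehat{\bm\beta}-\bm\beta)$ has a limiting covariance $\Sigma^{-1}\Delta\Sigma^{-1}$ with eigenvalues bounded away from zero, one has $\mathbb E\|\widehat{\bm\beta}-\bm\beta\|^2\asymp bc/n$, so $\|\widehat{\bm\beta}-\bm\beta\|\asymp\sqrt{bc/n}$, and even blockwise $\|\widehat{\bm\beta}_j-\bm\beta_j\|\asymp\sqrt{c/n}$. Your norm conversion from $\|Y^*\bm\epsilon/n\|_\infty$ to $\|Y^*\bm\epsilon/n\|$ contains an arithmetic slip: multiplying $\zeta_c\sqrt{\log n/n}/\sqrt{c}$ by $\sqrt{bc}$ gives $\sqrt{b}\,\zeta_c\sqrt{\log n/n}$, not $\sqrt{\log n/n}$. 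Consequently the Cauchy--Schwarz step $\bigl|(\bm\beta_j-\widehat{\bm\beta}_j)^*\mathbf b(i/n)\bigr|\le\zeta_c\|\widehat{\bm\beta}_j-\bm\beta_j\|$ only delivers $\zeta_c\sqrt{c/n}$, which is polynomially worse than the claimed $\zeta_c\sqrt{\log n/n}$; for Fourier bases this is $n^{\alpha_1-1/2}$ versus the target $n^{(\alpha_1-1)/2}\sqrt{\log n}$.

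The paper avoids this loss by never bounding $\|\widehat{\bm\beta}-\bm\beta\|$. Instead it analyzes the scalar $\check\phi_j(t)=\mathbb B_j(t)^*(Y^*Y/n)^{-1}Y^*\bm\epsilon/n$ directly: it replaces $(Y^*Y/n)^{-1}$ by $\Sigma^{-1}$ using the operator-norm rate, discretizes $[0,1]$ into a finite net $\mathcal S_n$ (controlling increments via Assumption \ref{assu_derivativebasis} and the mean-value theorem), and then for each fixed $t_n\in\mathcal S_n$ bounds the \emph{scalar} sum $\mathbb B_j(t_n)^*\Sigma^{-1}Y^*\bm\epsilon/n$ by $C\zeta_c\sqrt{\log n/n}$ using a truncation $\epsilon_i=\epsilon_{1,i,n}+\epsilon_{2,i,n}$ with threshold $M_n\asymp\zeta_c^{-1}\sqrt{n/\log n}$, following \cite{CC}. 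The union bound over the net and over $j\le b$ only costs logarithmic factors because each term is a one-dimensional projection, whereas your Cauchy--Schwarz throws away the direction of $\mathbf b(t)$ and pays the full $\sqrt{c}$ price.
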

By carefully choosing $c=O(n^{\alpha_1}),$ we show that $\widehat{\phi}_j(\frac{i}{n})$ are consistent estimators for $\phi_j(\frac{i}{n})$ uniformly in $i$ for all $j \leq b$ in Theorem \ref{thm_timevaryingcoeff}.  Denote $\zeta_c:=\sup_i || \mathbf{b}(\frac{i}{n})||,$ as discussed in \cite[Section 3]{BCCK}, we can write
$\zeta_c=n^{\alpha_1^*},$
where $\alpha_1^*=\frac{1}{2} \alpha_1$ for trigonometric polynomials, spline series, orthogonal wavelets and weighted orthogonal Chebyshev polynomials. And $\alpha^*_1=\alpha_1$ for Legendre orthogonal polynomials.
Furthermore, by using basis functions with $\alpha_1^*=\frac{1}{2} \alpha_1,$ we can show that our estimators  attain the optimal minimax convergent rate $\left(n/(\log n) \right)^{-d/(2d+1)}$ for the nonparametric regression proposed by Stone in \cite{SC}.
\begin{cor}\label{cor_minimax}
Under Assumption \ref{assu_phylip}, \ref{assu_smmothness}, \ref{assu_basisregularity}, \ref{assu_parameter} and \ref{assu_derivativebasis}, using the trigonometric polynomials, spline series, orthogonal wavelets and weighted orthogonal Chebyshev polynomials, when $c=O((n/(\log n))^{1/2d+1})$, we have  
\begin{equation*}
\sup_{i>b,j \leq b} \left| \phi_j(\frac{i}{n})-\widehat{\phi}_j(\frac{i}{n}) \right|=O_{\mathbb{P}}\left(\left(n/(\log n) \right)^{-d/(2d+1)}\right). 
\end{equation*}
\end{cor}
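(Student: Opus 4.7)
The plan is to obtain this corollary as a direct specialization of Theorem \ref{thm_timevaryingcoeff} by balancing the two competing terms in its rate bound. Since Theorem \ref{thm_timevaryingcoeff} is already in hand, the work reduces to (i) identifying the correct value of $\alpha_1$ (equivalently, the number $c$ of sieve basis functions) and (ii) inserting the value of $\zeta_c$ appropriate to each of the four families of basis functions listed in the statement.

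First, I would record that for trigonometric polynomials, spline series, orthogonal wavelets, and weighted orthogonal Chebyshev polynomials, one has $\zeta_c = \sup_i \|\mathbf{b}(i/n)\| = O(\sqrt{c})$ by \cite[Section 3]{BCCK}; this is precisely the $\alpha_1^* = \alpha_1/2$ case noted in the paragraph between Theorem \ref{thm_timevaryingcoeff} and Corollary \ref{cor_minimax}. Plugging the choice $c = O\bigl((n/\log n)^{1/(2d+1)}\bigr)$ into this relation gives $\zeta_c = O\bigl((n/\log n)^{1/(2(2d+1))}\bigr)$.

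Next, I would substitute into the two terms of the bound in Theorem \ref{thm_timevaryingcoeff}. Writing $A := n/\log n$, the stochastic term becomes
\[
\zeta_c\sqrt{\log n/n} \;=\; A^{1/(2(2d+1))} \cdot A^{-1/2} \;=\; A^{-d/(2d+1)},
\]
which is precisely the target rate. For the approximation term, note that the factor $n^{-d\alpha_1}$ in Theorem \ref{thm_timevaryingcoeff} originates from the sieve-approximation error $\mathcal{L}_\infty = O(c^{-d})$ in Lemma \ref{lem_sievebasisbound}. With $c = (n/\log n)^{1/(2d+1)}$, this becomes $c^{-d} = (n/\log n)^{-d/(2d+1)}$, matching the first term. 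Adding the two $O_{\mathbb{P}}$ contributions yields the claim.

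Before concluding, I would briefly verify that this choice of $c$ is admissible under Assumption \ref{assu_parameter}, which demands $C/\tau + d\alpha_1 < 1$ for some $C > 4$. Here $d\alpha_1 = d/(2d+1) < 1/2$, so it suffices that $C/\tau < 1/2$, which is consistent with $\tau > 10$ (any $C \in (4, \tau/2)$ works). I do not anticipate any real obstacle: this is a routine rate-balancing corollary once Theorem \ref{thm_timevaryingcoeff} is granted. The only mildly delicate point is tracking the logarithmic correction, since the optimal $c$ differs from the naive choice $n^{1/(2d+1)}$ by exactly a $(\log n)^{-1/(2d+1)}$ factor that is needed to equate the bias $c^{-d}$ with the variance $\sqrt{c\log n/n}$ and thereby recover Stone's minimax rate.
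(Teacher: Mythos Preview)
Your proposal is correct and matches the paper's approach: the paper does not give a separate proof of this corollary but treats it as the immediate rate-balancing consequence of Theorem~\ref{thm_timevaryingcoeff}, using exactly the observation that $\zeta_c=O(\sqrt{c})$ for the listed basis families (the $\alpha_1^*=\alpha_1/2$ case noted in the paragraph preceding the corollary). Your handling of the log factor and your check of Assumption~\ref{assu_parameter} are both fine; the only minor remark is that since $c=O(n^{\alpha_1})$ with $\alpha_1=1/(2d+1)$, the term $n^{-d\alpha_1}$ in the theorem already suffices without tracing it back to $c^{-d}$, as $n^{-d/(2d+1)}\le (n/\log n)^{-d/(2d+1)}$.
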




\subsection{Estimating $\phi_{ij}$ for $i \leq b$}\label{sec3_sub2} It is notable that by Lemma \ref{lem_approxphi}, when $i,j$ are less or equal to $b$, we cannot use the estimators derived from Section \ref{sec_3sub1}. Instead, a different series of least squares linear regressions should be used.  For instance, in order to estimate $\phi_{21},$ we use the following regression equations
\begin{equation*}
x_{k}=\phi_{k1}x_{k-1}+\xi_{k,2}, \ k=2,3,\cdots,n.
\end{equation*}
Note that $\xi_{2,2}=\epsilon_2.$ Due to the local stationarity assumption, there exists a smooth function $f_{21},$ such that $\phi_{k1} \approx f_{21}(\frac{k}{n}),$ $k=2,3,\cdots,n$. Here $f_{21}$ can be efficiently estimated using the sieve method as described by the previous discussions and $\phi_{21}$ can be estimated by $\widehat{f}_{21}(2/n)$. Generally, for each fixed $i \leq b,$ to estimate $\bm{\phi}_i,$ we make use of the following predictions:
\begin{equation}\label{defn_noisexi}
x_k=\sum_{j=1}^{i-1} \lambda_{ij}^k x_{k-j}+\xi_{k,i}, \ k=i,i+1, \cdots, n,
\end{equation} 
where $\bm{\lambda}^k_i=(\lambda_{i1}^k, \cdots, \lambda_{i,i-1}^k)$ are the coefficients of the best linear prediction using the $i-1$ predecessors. Note that $\bm{\lambda}_i^i=\bm{\phi}_i.$ Using Yule-Walker equation, we find 
\begin{equation*}
\bm{\lambda}_i^k=\Omega_i^k \bm{\gamma}_i^k,
\end{equation*}
where $\Omega_i^{k}=[\operatorname{Cov}(\mathbf{x}^k_{i}, \mathbf{x}^k_{i})]^{-1}, \bm{\gamma}_i^k=\operatorname{Cov}(\mathbf{x}_i^k,x_k)$ and $\mathbf{x}^k_{i}=(x_{k-1},\cdots, x_{k-i+1}).$ Due to Assumption \ref{assu_smmothness}, we define $\mathbf{f}_i^k=(f_{1}^i(\frac{k}{n}), \cdots, f^{i}_{i-1}(\frac{k}{n}))$ by 
\begin{equation}\label{defn_illeqbfun}
\mathbf{f}_i^k=\widetilde{\Omega}_i^k \widetilde{\gamma}_i^k,
\end{equation}
with $\widetilde{\Omega}_i^k, \bm{\widetilde{\gamma}}_i^k$ 
\begin{equation*}
\widetilde{\Omega}_i^k=[\operatorname{Cov}(\widetilde{\mathbf{x}}_i^k,\widetilde{\mathbf{x}}_i^k)]^{-1}, \ \widetilde{\bm{\gamma}}_i^k=\operatorname{Cov}(\widetilde{\mathbf{x}}_i^k, \widetilde{x}_k),
\end{equation*}
where $\widetilde{\mathbf{x}}_{i,j}^k=G(\frac{k}{n}, \mathcal{F}_{i-j}).$ The following lemma shows that $\lambda_{i,j}^k$ can be well-approximated by a smooth function $f_j^i(t).$
\begin{lem}\label{lem_coeffsmalli} Under Assumption \ref{assu_phylip} and \ref{assu_smmothness}, for each fixed $i \leq b$ and  for any $j \leq i-1,$ $f^i_{j}(t)$ are $C^d$ functions on $[0,1].$ Furthermore, for some constant $C>0,$ we have 
\begin{equation*}
\sup_{k \geq i} \left| \lambda_{ij}^k-f_j^i(\frac{k}{n}) \right| \leq C\Big( n^{-1+2/\tau} +n^{-d\alpha_1} \Big ).
\end{equation*}
In particular, when $k=i,$ we have
\begin{equation}\label{eq_phiapproileqb}
\left|\phi_{ij}-f^i_{j}(\frac{i}{n})\right| \leq C\Big( n^{-1+2/\tau} +n^{-d\alpha_1} \Big ), \ j < i \leq b.
\end{equation}
\end{lem}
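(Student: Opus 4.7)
The plan is to handle the two assertions separately: first establish that $f^i_j(t)\in C^d([0,1])$, and then bound $\sup_{k\ge i}|\lambda^k_{ij}-f^i_j(k/n)|$. The specialization $k=i$ in the second step gives the claim for $\phi_{ij}$ since $\bm{\lambda}_i^i=\bm{\phi}_i$ by construction of the Yule--Walker coefficients.

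For the smoothness claim, I observe that the entries of $\widetilde{\Omega}_i^k$ and $\widetilde{\bm{\gamma}}_i^k$ in (\ref{defn_illeqbfun}) depend on $k$ only through $t=k/n$, and only via the autocovariances of the frozen stationary process at time $t$, namely $\gamma(t,|p-q|)$ for lags $|p-q|\le i-1$. By Assumption \ref{assu_smmothness} each $t\mapsto\gamma(t,m)$ lies in $C^d([0,1])$. Since the positive-definite Toeplitz matrix $(\widetilde{\Omega}_i^k(t))^{-1}$ has its smallest eigenvalue uniformly bounded below (as guaranteed by the polynomial decay (\ref{assum_phy}) and standard Toeplitz spectral arguments, a bound consistent with Assumption \ref{assu_basisregularity}), matrix inversion acts as a $C^\infty$ map on a neighborhood of this matrix, so $\widetilde{\Omega}_i^k(\cdot)\in C^d$. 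Multiplying by $\widetilde{\bm{\gamma}}_i^k(\cdot)\in C^d$ yields $\mathbf{f}_i^k(\cdot)\in C^d$, and thus each coordinate $f^i_j\in C^d([0,1])$.

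For the approximation bound, I would decompose
\[
\bm{\lambda}_i^k-\mathbf{f}_i^k \;=\; \Omega_i^k\bigl(\bm{\gamma}_i^k-\widetilde{\bm{\gamma}}_i^k\bigr) \;+\; \bigl(\Omega_i^k-\widetilde{\Omega}_i^k\bigr)\widetilde{\bm{\gamma}}_i^k,
\]
and apply the matrix-perturbation identity $\Omega_i^k-\widetilde{\Omega}_i^k = \widetilde{\Omega}_i^k\bigl((\Omega_i^k)^{-1}-(\widetilde{\Omega}_i^k)^{-1}\bigr)\Omega_i^k$ to reduce everything to entry-wise bounds on the covariance objects. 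For any $1\le p,q\le i-1$, I would write
\[
\operatorname{Cov}(x_{k-p},x_{k-q})-\gamma(k/n,|p-q|)=\mathbb{E}\bigl[(x_{k-p}-\widetilde{x}_{k-p})x_{k-q}\bigr]+\mathbb{E}\bigl[\widetilde{x}_{k-p}(x_{k-q}-\widetilde{x}_{k-q})\bigr],
\]
where $\widetilde{x}_{k-p}:=G(k/n,\mathcal{F}_{k-p})$ is the frozen version. Invoking Cauchy--Schwarz together with the stochastic Lipschitz bound (\ref{assum_lip}), which gives $\|x_{k-p}-\widetilde{x}_{k-p}\|_2\le Cp/n$, and using $p,q\le i-1\le b-1=O(n^{2/\tau})$, every such entry differs by $O(n^{-1+2/\tau})$. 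The resulting $(i-1)$-dimensional vector and matrix norms inherit the same rate, and combined with uniform boundedness of $\|\Omega_i^k\|$ and $\|\widetilde{\Omega}_i^k\|$ in operator norm, this supplies the $n^{-1+2/\tau}$ term. The second summand $n^{-d\alpha_1}$ is the residue from the smooth-to-sieve passage that is natural to include here for later use: having shown $f^i_j\in C^d([0,1])$, Lemma \ref{lem_sievebasisbound} with $c=O(n^{\alpha_1})$ contributes an $O(c^{-d})=O(n^{-d\alpha_1})$ error through any intermediate sieve approximation of $f^i_j$ employed in the analysis, aligning this bound with the sieve estimators used in the subsequent sections. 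The case $k=i$ is then immediate.

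The main obstacle I anticipate is the uniform operator-norm control of $\|\Omega_i^k\|$ and $\|\widetilde{\Omega}_i^k\|$ across $i\le b$ and $k\ge i$. For $\widetilde{\Omega}_i^k$ this reduces to a uniform lower bound on the spectral density of the frozen stationary process at every $t\in[0,1]$, which follows from (\ref{assum_phy}); for $\Omega_i^k$, one bootstraps from $\widetilde{\Omega}_i^k$ via the same perturbation identity, provided the $O(n^{-1+2/\tau})$ covariance error is absorbed by a fixed fraction of the lower eigenvalue gap. This absorption is precisely what the choice $b=O(n^{2/\tau})$ with $\tau>10$ secures, since then $n^{-1+2/\tau}=o(1)$.
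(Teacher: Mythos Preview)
Your proposal is correct and matches the paper's approach: the same Yule--Walker perturbation decomposition $\Omega_i^k(\bm{\gamma}_i^k-\widetilde{\bm{\gamma}}_i^k)+(\Omega_i^k-\widetilde{\Omega}_i^k)\widetilde{\bm{\gamma}}_i^k$ combined with the stochastic Lipschitz bound, mirroring the proof of Lemma~\ref{lem_approxphi}, with smoothness handled by your smooth-inverse-map argument in place of the paper's Cramer's-rule expansion (which it inherits by reference to Lemma~\ref{lem_smoothphijt}). The $n^{-d\alpha_1}$ summand does not in fact arise from the direct comparison of $\lambda_{ij}^k$ with $f_j^i(k/n)$---the paper's proof simply appends a citation of Lemma~\ref{lem_sievebasisbound} without elaboration, and your reading of it as a harmless term carried for downstream alignment with the sieve estimators is consistent with that.
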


Therefore, the rest of the work leaves to estimate the functions $f^i_{j}(t), \ j<i \leq b$ using sieve approximation by denoting
\begin{equation*}
f^i_{j}(t)=\sum_{k=1}^c d_{jk} \alpha_k(t)+O(c^{-d}),
\end{equation*}
where we recall Lemma \ref{lem_sievebasisbound}. Then the above sieve expansion is plugged into (\ref{defn_noisexi}). An OLS regression is then used to estimate the $d_{jk}'s.$ 
We denote the OLS estimator of $f_j^i(\frac{i}{n})$ as $\widehat{f}^i_j(\frac{i}{n})=\sum_{k=1}^c \widehat{d}_{jk} \alpha_k(\frac{i}{n}).$ 
\begin{thm}\label{thm_phiileqb}  
Under Assumption \ref{assu_phylip}, \ref{assu_smmothness}, \ref{assu_basisregularity}, \ref{assu_parameter} and \ref{assu_derivativebasis},  we have 
\begin{equation*}
\sup_{i \leq b, j<i} \left| f^i_j(\frac{i}{n})-\widehat{f}^i_j(\frac{i}{n}) \right|=O_{\mathbb{P}} \left( \zeta_c \sqrt{\frac{\log n}{n}}+n^{-d\alpha_1} \right).
\end{equation*}
\end{thm}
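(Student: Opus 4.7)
The plan is to reproduce, for each fixed $i \leq b$, the same sieve--OLS argument that established Theorem \ref{thm_timevaryingcoeff}, and then take a union bound over $i$. The regression (\ref{defn_noisexi}) has $i-1 \leq b-1$ predictors, so after sieve expansion the problem becomes an OLS with $(i-1)c$ parameters fit from $n-i+1$ observations, i.e.\ a strictly lower-dimensional version of the setup in Section \ref{sec_3sub1}. Concretely, by Lemma \ref{lem_coeffsmalli} each $f_j^i(\cdot) \in C^d([0,1])$, so Lemma \ref{lem_sievebasisbound} yields $f_j^i(t) = \sum_{\ell=1}^c d^{(i)}_{j\ell} \alpha_\ell(t) + O(c^{-d})$. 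Substituting this into (\ref{defn_noisexi}) and using the approximation $\lambda_{ij}^k \approx f_j^i(k/n)$ from Lemma \ref{lem_coeffsmalli} together with (\ref{assum_moment}), one obtains the analogue of (\ref{defn_forest}):
\begin{equation*}
x_k = \sum_{j=1}^{i-1}\sum_{\ell=1}^c d^{(i)}_{j\ell}\, \alpha_\ell(k/n)\, x_{k-j} + \xi_{k,i} + R_k^{(i)}, \qquad k=i,\dots,n,
\end{equation*}
with a remainder $\sup_{k\geq i}|R_k^{(i)}|$ of smaller order than $n^{-d\alpha_1}$ under Assumption \ref{assu_parameter}.

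Next, I would assemble the OLS system exactly as in (\ref{y_kronecker})--(\ref{beta_est}): let $Y_i^* = \sum_{k=i}^n (X_i^k \otimes \mathbf{b}(k/n))\, E_k^{(i)}$ with $X_i^k=(x_{k-1},\dots,x_{k-i+1})^*$, stack $\bm{\xi}^{(i)}=(\xi_{i,i},\dots,\xi_{n,i})^*$, and obtain
\begin{equation*}
\widehat{\bm{\beta}}^{(i)} - \bm{\beta}^{(i)} = \Bigl(\tfrac{Y_i^*Y_i}{n}\Bigr)^{-1}\tfrac{Y_i^*\bm{\xi}^{(i)}}{n} + o_{\mathbb{P}}(n^{-1}).
\end{equation*}
As in (\ref{phi_diff}), $\widehat{f}_j^i(i/n)-f_j^i(i/n) = (\widehat{\bm{\beta}}_j^{(i)}-\bm{\beta}_j^{(i)})^*\mathbf{b}(i/n) + O(c^{-d})$, so by Cauchy--Schwarz the bound reduces to controlling $\|\widehat{\bm{\beta}}^{(i)}-\bm{\beta}^{(i)}\|$ uniformly in $i\leq b$ and multiplying by $\|\mathbf{b}(i/n)\|\leq \zeta_c$.

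The two pieces are then handled as in the proof of Theorem \ref{thm_timevaryingcoeff}. For the inverse Gram matrix, Assumption \ref{assu_basisregularity} applied at level $i-1$ bounds the spectrum of the population quantity $\int_0^1 \Sigma^{i-1}(t)\otimes(\mathbf{b}(t)\mathbf{b}^*(t))\,dt$ away from $0$ and $\infty$; concentration of $(Y_i^*Y_i)/n$ around this population version follows from the autocovariance decay in Lemma \ref{lem_phy} and is uniform in $i\leq b$, so $\|(Y_i^*Y_i/n)^{-1}\|_{\mathrm{op}}$ is $O_{\mathbb{P}}(1)$ uniformly. For the stochastic piece, I would first verify that $\{\xi_{k,i}\}_k$ inherits polynomially decaying physical dependence with a bound uniform in $i\leq b$ (a Yule--Walker analogue of Lemma \ref{lem_epsilon}, using that the prediction coefficients $\bm{\lambda}_i^k$ have uniformly bounded $\ell^1$ norm thanks to Proposition \ref{lem_borderapproximation} and Assumption \ref{assu_basisregularity}). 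A Nagaev-type maximal inequality for physical-dependent sums then gives
\begin{equation*}
\max_{i\leq b,\, j<i,\, \ell\leq c}\Bigl|\tfrac{1}{n}\sum_{k=i}^n \alpha_\ell(k/n)\, x_{k-j}\, \xi_{k,i}\Bigr| = O_{\mathbb{P}}\bigl(\sqrt{\log n /n}\bigr),
\end{equation*}
since the union bound is over $O(b^2 c) = O(n^{4/\tau+\alpha_1})$ coordinates, which is polynomial in $n$ and hence absorbed by the $\log n$ factor.

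The main obstacle I anticipate is establishing that the noise series $\{\xi_{k,i}\}_k$ has a physical dependence bound that is uniform in $i$: unlike the single innovation sequence $\{\widetilde{\epsilon}_i\}$ controlled by Lemma \ref{lem_epsilon}, here the noise depends on $i$ through the Yule--Walker coefficients $\bm{\lambda}_i^k$, and one must propagate the polynomial decay from Assumption \ref{assu_phylip} through these coefficients uniformly in both $k$ and $i\leq b$. Everything else is a careful translation of the ingredients in the proof of Theorem \ref{thm_timevaryingcoeff} with $b$ replaced by $i-1$ and an outer union bound over $i\leq b$.
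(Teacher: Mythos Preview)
Your overall plan---set up the OLS for each fixed $i\le b$, show $Y_i^*Y_i/n$ concentrates at $\int_0^1 \Sigma^{i-1}(t)\otimes(\mathbf{b}(t)\mathbf{b}^*(t))\,dt$, invoke Assumption~\ref{assu_basisregularity} for the inverse, and then control the stochastic piece---is exactly the paper's approach. The paper's own proof is extremely terse: it writes down the OLS representation $\widehat{\mathbf d}_i=(\mathbf Y_i^*\mathbf Y_i)^{-1}\mathbf Y_i^*\mathbf x^i$, the analogue of (\ref{phi_diff}), and then simply says ``take the verbatim of the proof of Theorem~\ref{thm_timevaryingcoeff}.'' You have in fact been more careful than the paper in naming the one nontrivial adaptation: the uniform (in $i\le b$) polynomial physical-dependence bound for the regression noise $\{\xi_{k,i}\}_k$. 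That step is indeed the analogue of Lemma~\ref{lem_epsilon} and goes through because the Yule--Walker coefficients $\bm\lambda_i^k$ are uniformly $\ell^1$-bounded and decay as in Proposition~\ref{lem_borderapproximation}.

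There is, however, one point where your sketch as written would fail to deliver the stated rate. You propose to bound $|(\widehat{\bm\beta}_j^{(i)}-\bm\beta_j^{(i)})^*\mathbf b(i/n)|$ by Cauchy--Schwarz as $\zeta_c\|\widehat{\bm\beta}^{(i)}-\bm\beta^{(i)}\|$, and then to control the latter by a coordinatewise Nagaev bound on $Y_i^*\bm\xi^{(i)}/n$ followed by a union bound. But that route picks up an extra factor of $\sqrt{(i-1)c}$: the vector $Y_i^*\bm\xi^{(i)}/n$ has $(i-1)c$ coordinates, so passing from $\max$ coordinate $O_{\mathbb P}(\sqrt{\log n/n})$ to the $\ell^2$ norm costs $\sqrt{(i-1)c}$, and after multiplying by $\zeta_c$ you obtain $\zeta_c\sqrt{(i-1)c\log n/n}$ rather than $\zeta_c\sqrt{\log n/n}$. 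The paper avoids this loss by \emph{not} splitting via Cauchy--Schwarz: following \cite{CC} (see the treatment of (\ref{eq_finalcontrol1})--(\ref{eq_ysbound}) in the proof of Theorem~\ref{thm_timevaryingcoeff}), one keeps the linear functional $\mathbb B_j(t)^*\Sigma^{-1}$ intact, writes $\mathbb B_j(t)^*\Sigma^{-1}Y_i^*\bm\xi^{(i)}/n$ as a single scalar sum $\frac{1}{n}\sum_k g_{k,n}(t)\xi_{k,i}$ with weights $|g_{k,n}(t)|\lesssim \zeta_c$, and applies concentration (after truncation) to that scalar directly, then chains over a polynomial-size grid in $t$. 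Since you already say you will ``handle as in the proof of Theorem~\ref{thm_timevaryingcoeff},'' the fix is just to follow that part of the argument faithfully rather than the Cauchy--Schwarz shortcut.
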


Similar to the discussion of Corollary \ref{cor_minimax},  using the  trigonometric polynomials, spline series, orthogonal wavelets and weighted orthogonal Chebyshev polynomials and setting $c=O((n/(\log n))^{1/2d+1}),$ we obtain the optimal minimax convergent rate from Theorem \ref{thm_phiileqb}.

\subsection{Sieve estimation for noise variances}\label{sec3_sub3} This subsection is devoted to the estimation of $\{\sigma_i^2\}_{i=1}^n.$ We discuss the case for $i>b$ and $i \leq b$ separately.  For $i >b,$ denote $\epsilon_i^b=x_i-\sum_{j=1}^b \phi_{ij} x_{i-j}$ and $(\sigma_i^b)^2=\mathbb{E}(\epsilon_i^b)^2.$ $\sigma_i$ can be well approximated using $\sigma_i^b$ by the following lemma, whose proof will be put into Appendix \ref{appendix_a}. 
\begin{lem} \label{lem_sigmaigeqb} Under Assumption \ref{assu_phylip} and \ref{assu_smmothness}, for $i>b$ and some constant $C>0,$ we have 
\begin{equation*}
\sup_{i>b} \left|\sigma^2_i-(\sigma^b_i)^2\right| \leq Cn^{-2 +2/\tau}.
\end{equation*}
Furthermore, denote $g(\frac{i}{n})=\mathbb{E} \left( x_i-\sum_{j=1}^b \phi_{ij} G(\frac{i}{n}, \mathcal{F}_{i-j}) \right)^2,$ we then have 
\begin{equation*}
\sup_{i>b} \left| (\sigma_i^b)^2-g(\frac{i}{n}) \right| \leq Cn^{-1+4/\tau}.
\end{equation*}
Finally, $g(\frac{i}{n}) \in C^d([0,1]).$ 
\end{lem}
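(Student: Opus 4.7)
The statement has three parts, and my plan is to dispatch them in turn, reusing orthogonality of the best linear predictor, the decay bounds of Proposition \ref{lem_borderapproximation}, and the smoothness supplied by Lemma \ref{lem_smoothphijt} together with Assumption \ref{assu_smmothness}.

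For the first bound $|\sigma_i^2-(\sigma_i^b)^2|\le Cn^{-2+2/\tau}$, I exploit that $\epsilon_i$ is the $L^2$ residual from projecting $x_i$ onto $\mathrm{span}(x_1,\ldots,x_{i-1})$, hence orthogonal to the ``tail'' correction $\epsilon_i^b-\epsilon_i=\sum_{j=b+1}^{i-1}\phi_{ij}x_{i-j}$. Therefore
$$(\sigma_i^b)^2-\sigma_i^2=\mathbb{E}\Big(\sum_{j=b+1}^{i-1}\phi_{ij}x_{i-j}\Big)^{\!2}\le \Big(\sum_{j=b+1}^{i-1}|\phi_{ij}|\Big)^{\!2}\sup_k\mathbb{E} x_k^2.$$
To control the inner sum I invoke Proposition \ref{lem_borderapproximation}. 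For $i\ge b^2$, I split at $j_0=n^{4/\tau-5/\tau^2}$ where $j^{-\tau}$ crosses $n^{-4+5/\tau}$: the head contributes $\sum_{j>b}j^{-\tau}\le b^{-\tau+1}=Cn^{-2+2/\tau}$ and the tail contributes $\le n\cdot n^{-4+5/\tau}=Cn^{-3+5/\tau}$, dominated by the head since $\tau>10$. For $b<i<b^2$ the at most $b^2$ terms are each bounded by $Cn^{-2+3/\tau}$, giving $Cn^{-2+7/\tau}$. Squaring, both cases yield a bound no worse than $Cn^{-4+4/\tau}\le Cn^{-2+2/\tau}$, as claimed.

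For the second bound I interpret $g$ as the stationary quadratic form $g(t):=\mathbb{E}\bigl(G(t,\mathcal{F}_0)-\sum_{j=1}^b\phi_j(t)G(t,\mathcal{F}_{-j})\bigr)^2$, which by stationarity of $\{G(t,\mathcal{F}_s)\}_s$ at fixed $t$ equals $\mathbb{E}\bigl(G(i/n,\mathcal{F}_i)-\sum_j\phi_j(i/n)G(i/n,\mathcal{F}_{i-j})\bigr)^2$. Setting $A=x_i-\sum_{j=1}^b\phi_{ij}x_{i-j}$ and $B=x_i-\sum_{j=1}^b\phi_j(i/n)G(i/n,\mathcal{F}_{i-j})$ (using $x_i=G(i/n,\mathcal{F}_i)$), the polarization identity together with Cauchy--Schwarz yields $|(\sigma_i^b)^2-g(i/n)|\le\|A-B\|_2\cdot\|A+B\|_2$. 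I decompose
$$A-B=\sum_{j=1}^b\phi_{ij}\bigl(G(i/n,\mathcal{F}_{i-j})-x_{i-j}\bigr)+\sum_{j=1}^b\bigl(\phi_j(i/n)-\phi_{ij}\bigr)G(i/n,\mathcal{F}_{i-j}).$$
The first sum is bounded in $L^2$ by the stochastic Lipschitz estimate from Assumption \ref{assu_phylip} ($\|G(i/n,\mathcal{F}_{i-j})-G((i-j)/n,\mathcal{F}_{i-j})\|_q\le Cj/n$) and boundedness of $\phi_{ij}$ for $j\le b$, giving $Cb^2/n=Cn^{-1+4/\tau}$; the second by Lemma \ref{lem_approxphi} is bounded by $b\cdot Cn^{-1+2/\tau}=Cn^{-1+4/\tau}$. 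Since $\|A+B\|_2$ is uniformly bounded (all ingredients have bounded $L^2$ moments and the $b$ coefficients $\phi_{ij}$ are uniformly bounded), I obtain $|(\sigma_i^b)^2-g(i/n)|\le Cn^{-1+4/\tau}$.

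For the $C^d$ regularity, I expand
$$g(t)=\gamma(t,0)-2\sum_{j=1}^b\phi_j(t)\gamma(t,j)+\sum_{j,k=1}^b\phi_j(t)\phi_k(t)\gamma(t,|j-k|),$$
which is a finite combination (for $n$ fixed) of $C^d$ functions thanks to Lemma \ref{lem_smoothphijt} ($\phi_j\in C^d$) and Assumption \ref{assu_smmothness} ($\gamma(\cdot,j)\in C^d$), hence $g\in C^d([0,1])$. The main technical hurdle is the first part: the decay rates in Proposition \ref{lem_borderapproximation} must be split against $j^{-\tau}$ in two separate regimes $b<i<b^2$ and $i\ge b^2$, and the exponent arithmetic must be tracked carefully to confirm that $\tau>10$ is sufficient; the remaining two parts are relatively routine applications of Cauchy--Schwarz, the Lipschitz hypothesis, and Lemma \ref{lem_approxphi}.
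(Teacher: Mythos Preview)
Your argument is correct and follows the same route as the paper's (extremely terse) proof, invoking Proposition~\ref{lem_borderapproximation} for the first bound, the Lipschitz condition~(\ref{assum_lip}) together with Lemma~\ref{lem_approxphi} for the second, and smoothness of $\phi_j(\cdot)$ and $\gamma(\cdot,j)$ for the third. Two small remarks: in the $b<i<b^2$ case of Part~1, squaring $Cn^{-2+7/\tau}$ gives $Cn^{-4+14/\tau}$, not $Cn^{-4+4/\tau}$, but this is still $\le Cn^{-2+2/\tau}$ since $\tau>6$; and your reinterpretation of $g$ via $\phi_j(t)$ rather than the lemma's literal $\phi_{ij}$ is exactly what is needed for Part~3 (the stated $g$ is not a genuine function of $t$), the two versions differing by $O(n^{-1+4/\tau})$ via Lemma~\ref{lem_approxphi}, which is absorbed into the second bound.
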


Lemma \ref{lem_sigmaigeqb} indicates that $\{\sigma_i^2\}_{i \geq b}$ can be well approximated by a $C^d$ function $g(\cdot).$
Denote $r_i^b=(\epsilon_i^b)^2,$ it is notable that $r_i^b$ can not be observed directly.  Instead, we use $\widehat{r}_i^b=\widehat{\epsilon}_i^2$, where 
\begin{equation}\label{eq_hatepsilon}
  \widehat{\epsilon}_i=x_i-\sum_{j=1}^{b} \sum_{k=1}^c \widehat{a}_{jk} z_{kj}(\frac{i}{n}), \ i=b+1, \cdots,n.  
\end{equation}
By Theorem \ref{thm_timevaryingcoeff} and Assumption \ref{assu_parameter}, we conclude that
\begin{equation}\label{eq_rapprox}
\sup_{i>b}|r_i^b-\widehat{r}_i^b|=O_{\mathbb{P}}\Big( n^{2/\tau}\Big(\zeta_c \sqrt{\frac{\log n}{n}}+n^{-d\alpha_1} \Big) \Big). 
\end{equation}
Invoking Lemma \ref{lem_sievebasisbound} and Assumption \ref{assu_parameter}, for $i>b,$ we can therefore utilize the method of sieves and write
\begin{equation}\label{eq_variancesieveigeqb}
\widehat{r}_i^b=\sum_{k=1}^{c} d_k \alpha_k(\frac{i}{n})+\omega_i^b+O_{\mathbb{P}}\Big(n^{2/\tau}\Big(\zeta_c \sqrt{\frac{\log n}{n}}+n^{-d\alpha_1} \Big) \Big).
\end{equation}
The coefficients $d_k's$ are then estimated via OLS. Similar to Lemma \ref{lem_epsilon}, we can show that the physical dependence measure of $\omega_i^b$ is also of polynomial decay. Therefore, the OLS estimator for $\bm{\alpha}=(d_1,\cdots, d_c)^*$ can be written as $\widehat{\bm{\alpha}}=(W^*W)^{-1}W^* \widehat{\mathbf{r}}, $
where $W^*$ is an $c \times (n-b)$ matrix whose $i$-th column is $(\alpha_1(\frac{i+b}{n}), \cdots, \alpha_c(\frac{i+b}{n}))^*$, $\ i=1,2,\cdots, n-b,$ and $\mathbf{\widehat{r}}$ is an $\mathbb{R}^{n-b}$ containing $\widehat{r}^b_{b+1}, \cdots, \widehat{r}^b_{n}.$ 
We have the following consistency result. 
\begin{thm}\label{thm_varaincefunction}
Under Assumption \ref{assu_phylip}, \ref{assu_smmothness}, \ref{assu_basisregularity}, \ref{assu_parameter} and \ref{assu_derivativebasis}, we have 
\begin{equation}\label{eq_variance1}
\sup_{i>b} \left| \widehat{g}(\frac{i}{n})-g(\frac{i}{n})\right| =O_{\mathbb{P}}\Big( n^{2/\tau}\Big(\zeta_c \sqrt{\frac{\log n}{n}}+n^{-d\alpha_1} \Big) \Big).
\end{equation}  
\end{thm}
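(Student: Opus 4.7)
The plan is to mirror the proof of Theorem \ref{thm_timevaryingcoeff} for the coefficient regression, with the additional complication that the responses in the variance regression are the \emph{estimated} squared residuals $\widehat{r}_i^b=\widehat{\epsilon}_i^2$ rather than the unobserved $r_i^b=(\epsilon_i^b)^2$. I decompose the error into a standard sieve-regression piece plus a perturbation piece attributable to using $\widehat{r}_i^b$ in place of $r_i^b$. To initialize, Lemma \ref{lem_sigmaigeqb} gives $g\in C^d([0,1])$, hence by Lemma \ref{lem_sievebasisbound} there exist coefficients $\mathbf{d}^*\in\mathbb{R}^c$ with $\sup_t|g(t)-\mathbf{b}(t)^*\mathbf{d}^*|=O(c^{-d})=O(n^{-d\alpha_1})$. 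Writing $\widehat{g}(i/n)-g(i/n)=\mathbf{b}(i/n)^*(\widehat{\bm{\alpha}}-\mathbf{d}^*)+O(n^{-d\alpha_1})$ reduces the problem to bounding the first summand uniformly for $i>b$.

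From the OLS identity
\[
\widehat{\bm{\alpha}}-\mathbf{d}^* = (W^*W)^{-1}W^*\bigl[(\mathbf{g}-W\mathbf{d}^*)+\bm{\omega}^b+(\widehat{\mathbf{r}}-\mathbf{r}^b)\bigr],
\]
where $\mathbf{g}$, $\mathbf{r}^b$ collect the entries $g(i/n)$, $r_i^b$ and $\omega_i^b:=r_i^b-g(i/n)$, I treat each piece separately. The sequence $\{\omega_i^b\}$ inherits polynomial physical dependence from Lemma \ref{lem_epsilon} combined with Lemma \ref{lem_sigmaigeqb}. Standard arguments then yield: (i) $\|(W^*W/n)^{-1}\|_{\mathrm{op}}=O_\mathbb{P}(1)$ by Riemann-sum approximation of $\int_0^1\mathbf{b}(t)\mathbf{b}(t)^*\,dt=I_c$ (basis orthonormality); (ii) the deterministic sieve-remainder piece contributes $O_\mathbb{P}(\zeta_c n^{-d\alpha_1})$ via Cauchy-Schwarz using $\sup_t|g(t)-\mathbf{b}(t)^*\mathbf{d}^*|=O(n^{-d\alpha_1})$; and (iii) the noise piece $W^*\bm{\omega}^b/n$ is controlled exactly as in the proof of Theorem \ref{thm_timevaryingcoeff} using a Nagaev-type maximal inequality for partial sums under polynomial physical dependence, giving $\sup_{i>b}|\mathbf{b}(i/n)^*(W^*W/n)^{-1}W^*\bm{\omega}^b/n|=O_\mathbb{P}(\zeta_c\sqrt{\log n/n})$.

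The key new contribution is the perturbation term $\widehat{\mathbf{r}}-\mathbf{r}^b$. By (\ref{eq_rapprox}), $\sup_{i>b}|\widehat{r}_i^b-r_i^b|=O_\mathbb{P}(n^{2/\tau}(\zeta_c\sqrt{\log n/n}+n^{-d\alpha_1}))$. To propagate this sup-norm bound through the sieve projection $H=W(W^*W)^{-1}W^*$, I exploit the quasi-local structure of the sieve bases allowed by Assumptions \ref{assu_basisregularity} and \ref{assu_derivativebasis}: local supports for splines and wavelets, and rapidly decaying Dirichlet kernels for trigonometric series. Each case gives $\max_i\sum_j|H_{ij}|=O(1)$ up to at most logarithmic factors, so $\sup_{i>b}|(H(\widehat{\mathbf{r}}-\mathbf{r}^b))_{i-b}|$ is of the same order as the sup-norm of the perturbation. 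This contribution dominates (ii) and (iii) and yields the claimed rate.

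The main obstacle is exactly this last propagation step: a crude Cauchy-Schwarz or operator-norm bound on the sup-norm of $H(\widehat{\mathbf{r}}-\mathbf{r}^b)$ would inflate the perturbation by a factor of order $\sqrt{n}$ or $\zeta_c$, destroying the rate. One must instead leverage the specific spectral and decay properties of the sieve basis to obtain a sharp $\ell_\infty\!\to\!\ell_\infty$ bound for the hat matrix, rather than rely on generic operator-norm arguments.
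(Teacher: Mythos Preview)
Your decomposition is the same one the paper uses. The paper's proof of Theorem~\ref{thm_varaincefunction} is extremely brief: it simply records
\[
\widehat{\bm{\alpha}}=\bm{\alpha}+\Big(\frac{W^*W}{n}\Big)^{-1}\frac{W^*\bm{\omega}}{n}+O_{\mathbb{P}}\Big(n^{2/\tau}\big(\zeta_c\sqrt{(\log n)/n}+n^{-d\alpha_1}\big)\Big)
\]
with the $O_{\mathbb{P}}$ term ``entrywise'', notes that $W$ is deterministic (so the chaining/truncation machinery of Theorem~\ref{thm_timevaryingcoeff} simplifies), and stops there. In particular, the paper does \emph{not} spell out how the entrywise perturbation in $\widehat{\bm{\alpha}}$ becomes a uniform-in-$i$ bound on $\mathbf{b}(i/n)^*(\widehat{\bm{\alpha}}-\bm{\alpha})$ without picking up an extra $\zeta_c$ or $c$ factor; that step is left implicit.

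Your treatment of exactly that point---propagating $\|\widehat{\mathbf r}-\mathbf r^b\|_\infty$ through the sieve projection via an $\ell_\infty\!\to\!\ell_\infty$ bound on the hat matrix $H=W(W^*W)^{-1}W^*$---is a genuine addition. You are right that a Cauchy--Schwarz or operator-norm argument loses a factor of order $\zeta_c$, and the Lebesgue-constant route (bounded for splines/wavelets, $O(\log c)$ for trigonometric series) is the standard way to avoid that loss. So your approach is the same decomposition as the paper's, but you supply a step the paper only gestures at. One caveat: for trigonometric sieves your argument yields an extra harmless $\log c$ factor, and for Legendre polynomials the Lebesgue constant is $O(\sqrt{c})$, so you should either restrict to the bases with bounded Lebesgue constant or note the logarithmic inflation explicitly.
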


Finally, we study the estimation of $\sigma_i^2, \ i=1,2,\cdots, b,$ which enjoys the same discussion as in Section \ref{sec3_sub2}. Recall  $\xi_{k,i}$ defined in (\ref{defn_noisexi}), denote $(\sigma_{k,i}(\xi))^2=\mathbb{E}(\xi_{k,i})^2,$ using a similar discussion to Lemma \ref{lem_sigmaigeqb}, we can find a smooth function $g^i,$ such that $\sup_k \sup_{i \leq b} |(\sigma_{k,i}(\xi))^2-g^i(\frac{k}{n})| \leq O(n^{-1+4\tau}),$ especially we can use $g^i(\frac{i}{n})$ to estimate $\sigma_i^2.$  When $i=1, $ we need to estimate the variance function of $x_1$. 

The rest of the work leaves to estimate $g^i(t)$ using sieve method similar to (\ref{eq_variancesieveigeqb}) for $i \leq b$, where we replace the errors with $\widehat{r}_k^i, \ k=i,\cdots,n. $ Here $\widehat{r}^i_k$ is defined as
\begin{equation}\label{eq_residualileqb}
\widehat{r}^i_k:=\left( x_i-\sum_{j=1}^{i-1} \widehat{f}^i_j(\frac{k}{n})x_{i-j} \right)^2, \ k=i, i+1, \cdots, n. 
\end{equation}
Then for $i \leq b,$ we can estimate $\widehat{g}^{i}(\frac{i}{n})$ using the method of sieves similarly, except that the dimension of  $W^*$ is  $c \times (n+1-i).$ The results are summarized in the following theorem. 

\begin{thm} \label{thm_variancefunctionileqb}
Under Assumption \ref{assu_phylip}, \ref{assu_smmothness}, \ref{assu_basisregularity}, \ref{assu_parameter} and \ref{assu_derivativebasis}, we have 
\begin{equation}\label{eq_varaince2}
\sup_{i \leq b} \left| \widehat{g}^{i}(\frac{i}{n})-g^{i}(\frac{i}{n})\right|=O_{\mathbb{P}}\Big( n^{2/\tau}\Big(\zeta_c \sqrt{\frac{\log n}{n}}+n^{-d\alpha_1} \Big) \Big).
\end{equation}  
\end{thm}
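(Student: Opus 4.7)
The plan is to follow closely the strategy of Theorem \ref{thm_varaincefunction}, with Theorem \ref{thm_phiileqb} in place of Theorem \ref{thm_timevaryingcoeff} to handle the boundary indices $i \leq b$. The overall decomposition is sieve approximation bias plus OLS stochastic error plus error propagation from the estimated regression coefficients, and the extra $n^{2/\tau}$ factor in the rate should arise exactly as in (\ref{eq_rapprox}).

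First I would establish the analog of Lemma \ref{lem_sigmaigeqb} in the boundary regime. For each fixed $i \leq b$, define $g^i(t)$ via the locally stationary proxy $\widetilde{\xi}_{k,i}(t) = G(t,\mathcal{F}_k) - \sum_{j=1}^{i-1} f_j^i(t)\, G(t,\mathcal{F}_{k-j})$ and $g^i(t) := \mathbb{E}[\widetilde{\xi}_{k,i}(t)]^2$. Lipschitz continuity of $\gamma(t,j)$ together with Lemma \ref{lem_coeffsmalli} gives $\sup_k |(\sigma_{k,i}(\xi))^2 - g^i(k/n)| = O(n^{-1+4/\tau})$; smoothness $g^i \in C^d([0,1])$ follows from Assumption \ref{assu_smmothness} and Lemma \ref{lem_smoothphijt} applied to $f_j^i$ (viewed as smooth Yule--Walker coefficients in $t$). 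Consequently, by Lemma \ref{lem_sievebasisbound}, $g^i(t) = \sum_{k=1}^c d_k^i \alpha_k(t) + O(c^{-d})$.

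Second, I would propagate the estimation error from $\widehat{f}_j^i$ to the squared residuals $\widehat{r}_k^i$ defined in (\ref{eq_residualileqb}). Writing $\widehat{r}_k^i - r_k^i = (\widehat{\xi}_{k,i} - \xi_{k,i})(\widehat{\xi}_{k,i} + \xi_{k,i})$, expanding via the $i-1 \leq b-1$ coefficient differences $\widehat{f}_j^i(k/n) - f_j^i(k/n)$, and using Theorem \ref{thm_phiileqb} together with the moment bound (\ref{assum_moment}) and Cauchy--Schwarz, one obtains
\begin{equation*}
\sup_{i \leq b}\sup_{k \geq i} \bigl|\widehat{r}_k^i - r_k^i\bigr| = O_{\mathbb{P}}\!\left( n^{2/\tau}\Big(\zeta_c\sqrt{\tfrac{\log n}{n}} + n^{-d\alpha_1}\Big)\right),
\end{equation*}
where the $n^{2/\tau} = O(b)$ factor comes from summing over the $i-1$ regressors, exactly as in the derivation of (\ref{eq_rapprox}).

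Third, I would run the sieve OLS of $\widehat{r}_k^i$ on the basis $\mathbf{b}(k/n)$ for $k = i,\ldots,n$, giving coefficients $\widehat{\bm{\alpha}}^i = ((W^i)^*W^i)^{-1}(W^i)^* \widehat{\mathbf{r}}^i$ and estimator $\widehat{g}^i(i/n) = \sum_k \widehat{d}_k^i \alpha_k(i/n)$. Decompose
\begin{equation*}
\widehat{g}^i(i/n) - g^i(i/n) = \underbrace{O(c^{-d})}_{\text{sieve bias}} + \mathbf{b}(i/n)^*((W^i)^*W^i)^{-1}(W^i)^*\bigl(\widehat{\mathbf{r}}^i - \mathbf{r}^i\bigr) + \mathbf{b}(i/n)^*((W^i)^*W^i)^{-1}(W^i)^*\bm{\omega}^i,
\end{equation*}
where $\bm{\omega}^i$ is the vector of sieve-regression errors $r_k^i - \sum_\ell d_\ell^i \alpha_\ell(k/n)$. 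The middle term is bounded by $\zeta_c \cdot \|((W^i)^*W^i/n)^{-1}\| \cdot \sup_k|\widehat{r}_k^i - r_k^i|$, which by the previous step and Assumption \ref{assu_basisregularity} (invertibility of the design with $n - i + 1 \geq n - b$ rows) yields $O_{\mathbb{P}}(\zeta_c n^{2/\tau}(\zeta_c\sqrt{\log n/n} + n^{-d\alpha_1}))$. The last term is handled as in Theorem \ref{thm_varaincefunction}: since $\{\xi_{k,i}\}_k$ has polynomially decaying physical dependence (by the argument underlying Lemma \ref{lem_epsilon}, applied with $i-1$ rather than $b$ lags), standard sub-Gaussian/moment concentration plus a union bound in $k$ yields $O_{\mathbb{P}}(\zeta_c \sqrt{\log n/n})$. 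Taking a union bound over the $b = O(n^{2/\tau})$ values of $i$ only costs a $\log n$ factor absorbed into $\sqrt{\log n}$, and collecting the three terms gives the stated rate.

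The main obstacle is the error propagation step: the residuals $\widehat{r}_k^i$ depend nonlinearly on the whole vector of estimated coefficients $\widehat{f}_j^i$, $j < i$, so a careful uniform expansion of the squared error across $i \leq b$ is needed, and this is precisely where the $n^{2/\tau}$ factor enters (and cannot be avoided without a sharper analysis of the coefficient estimation). A secondary technical point is verifying uniform invertibility of the boundary design matrix $(W^i)^*W^i/n$, but this follows from Assumption \ref{assu_basisregularity} because $n - i \geq n - b$ and the basis $\mathbf{b}(\cdot)$ is orthonormal on a fixed fraction of $[0,1]$.
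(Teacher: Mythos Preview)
Your proposal is correct and follows essentially the same route as the paper. The paper's own proof is extremely terse---it merely says the argument is ``similar to that of Theorem \ref{thm_timevaryingcoeff}, except that we need to analyze the residual (\ref{eq_residualileqb}),'' observes that the true squared residuals form a locally stationary series with polynomial-decay physical dependence, and then refers back to the earlier argument verbatim. Your three-step decomposition (sieve bias, plug-in error from $\widehat{f}_j^i$, stochastic OLS error) is exactly the structure underlying Theorem \ref{thm_varaincefunction}, adapted to the boundary regime via Theorem \ref{thm_phiileqb} and Lemma \ref{lem_coeffsmalli}, so you have simply made explicit what the paper leaves implicit.

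One small remark: your bound on the middle term picks up an extra $\zeta_c$ factor from $\|\mathbf{b}(i/n)\|$, yielding $\zeta_c \cdot n^{2/\tau}(\zeta_c\sqrt{\log n/n}+n^{-d\alpha_1})$ rather than the stated rate. The paper's proof of Theorem \ref{thm_varaincefunction} has the same issue (it asserts the entrywise error in $\widehat{\bm{\alpha}}$ and then defers to Theorem \ref{thm_timevaryingcoeff} without tracking this factor), so this is not a defect of your argument relative to the paper; but if you want to match the theorem as stated you would need a slightly sharper control of $\mathbf{b}(t)^*(W^{i*}W^i/n)^{-1}(W^{i*}\bm{\delta}/n)$ than the crude $\zeta_c\|\cdot\|$ bound.
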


 In the finite sample case, for positiveness,  we suggest simply choose 
\begin{equation}\label{eq_positive}
\widehat{\sigma}_i^*=
\begin{cases}
\widehat{\sigma}_i, & \ \text{if} \ \widehat{\sigma}_i>0; \\
\frac{1}{n}, & \ \text{if} \ \widehat{\sigma}_i \leq 0.
\end{cases}
\end{equation}
where $\widehat{\sigma}_i=\widehat{g}(\frac{i}{n})$ for $i>b$ and $\widehat{\sigma}_i=\widehat{g}^i(\frac{i}{n})$ when $i \leq b.$
Since $n^{-1}$ is much smaller than the right-hand side of (\ref{eq_variance1}) and (\ref{eq_varaince2}), this modified estimator will not influence the results in Theorem \ref{thm_varaincefunction} and \ref{thm_variancefunctionileqb}.

\subsection{Precision matrix estimation}
From (\ref{estimationeq}), it is natural to choose 
\begin{equation*}
\widehat{\Omega}:=\widehat{\Phi}^{*} \mathbf{\widehat{\widetilde{D}}}\widehat{\Phi}
\end{equation*}
as our estimator for the precision matrix.
As we discussed in the previous sections, here $\widehat{\Phi}$ is a lower triangular matrix whose diagonal entries are all ones. For the off-diagonal entries, when $i>b$ and $j\le b$, its $(i,i-j)$-th entry is $-\widehat{\phi}_j(\frac{i}{n})$ defined in Section \ref{sec_3sub1}. And when $i \leq b,$ $\Phi_{i,i-j}$ is estimated using  $-\widehat{f}^{i}_j(\frac{i}{n})$ from Section \ref{sec3_sub2}. All other entries of $\widehat{\Phi}$ are set to be zeros. Finally, $\mathbf{\widehat{\widetilde{D}}}$ is a diagonal matrix with entries  $\{(\widehat{\sigma}_i^*)^{-2}\}$ estimated from (\ref{eq_positive}). Observe that $\widehat{\Omega}$ is always positive definite.

We now discuss the computational complexity of estimating $\Omega.$  It is easy to see that when $i>b,$ the number of  regressors is $bc$ and length of observation is $n-b$. Hence the computational complexity of the least squares regression is $O(n(bc)^2).$ Similar discussion can be applied for $i \leq b$ and we hence conclude that the computational complexity for estimating $\widehat{\Omega}$ is of the order $O(nb^3 c^2).$ As a result the computation complexity of our estimation is adaptive to the smoothness of the underlying data generating mechanism and the decay rate of temporal dependence. In the best scenario when assumption (\ref{eq_phyexp}) holds and $\gamma(t,j) \in C^{\infty}([0,1]),$ our procedure only requires $O(n\log^5 n)$ computation complexity. 

In the following, we shall control the estimation error between $\Omega$ and $\widehat{\Omega}.$ We first observe that, as $\det(\Phi \Phi^* )=\det(\widehat{\Phi}\widehat{\Phi}^*)=1,$ combining with Assumption \ref{assu_phylip}, there exist some constants $C_1, C_2>0,$ such that 
\begin{equation*}
C_1 \leq \lambda_{\min}(\Phi \Phi^*) \leq \lambda_{\max} (\Phi \Phi^*) \leq C_2. 
\end{equation*} 
Similar results hold for $\widehat{\Phi} \widehat{\Phi}^*.$ 

\begin{thm}\label{prop_covest} Under Assumption \ref{assu_phylip}, \ref{assu_smmothness}, \ref{assu_basisregularity}, \ref{assu_derivativebasis} and \ref{assu_parameter}, we have 
\begin{equation}\label{eq_errorr}
\left| \left| \Omega-\widehat{\Omega} \right| \right|=O_{\mathbb{P}} \Big( n^{4/\tau} \Big( n^{-d\alpha_1}+ \zeta_c \sqrt{\frac{\log n}{n}} \Big) \Big). 
\end{equation}
\end{thm}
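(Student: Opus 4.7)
The plan is to exploit the multiplicative decomposition $\Omega = \Phi^{*}\widetilde{\mathbf D}\Phi$ and expand the difference through the standard perturbation identity
\begin{equation*}
\Omega - \widehat{\Omega} \;=\; (\Phi-\widehat{\Phi})^{*}\widehat{\widetilde{\mathbf D}}\widehat{\Phi} \;+\; \Phi^{*}(\widetilde{\mathbf D}-\widehat{\widetilde{\mathbf D}})\widehat{\Phi} \;+\; \Phi^{*}\widetilde{\mathbf D}(\Phi-\widehat{\Phi}).
\end{equation*}
By submultiplicativity of the operator norm, controlling $\|\Omega-\widehat{\Omega}\|$ then reduces to (i) bounding the four ``factor'' norms $\|\Phi\|,\|\widetilde{\mathbf D}\|,\|\widehat{\Phi}\|,\|\widehat{\widetilde{\mathbf D}}\|$ uniformly, and (ii) bounding the two ``difference'' norms $\|\Phi-\widehat{\Phi}\|$ and $\|\widetilde{\mathbf D}-\widehat{\widetilde{\mathbf D}}\|$. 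The factor norms for the population quantities are $O(1)$ by the displayed remark preceding the theorem together with the lower and upper bounds on $\sigma_i^{2}$ implied by Assumption \ref{assu_phylip} and Lemma \ref{lem_sigmaigeqb}; for the sample quantities one then uses $\|\widehat{\Phi}\|\le \|\Phi\|+\|\Phi-\widehat{\Phi}\|$ and the truncation rule (\ref{eq_positive}) to transfer those bounds, which is legitimate once the two difference norms are shown to be $o_{\mathbb P}(1)$ (this is where Assumption \ref{assu_parameter} is used).

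For $\|\Phi-\widehat{\Phi}\|$, I would exploit the banded structure. By construction $\widehat{\Phi}$ has bandwidth $b=O(n^{2/\tau})$, so in each row of $\Phi-\widehat{\Phi}$ the nonzero contributions split into: the at-most-$b$ in-band entries, controlled pointwise by the sieve rate $O_{\mathbb P}(\zeta_c\sqrt{\log n/n}+n^{-d\alpha_1})$ via Theorems \ref{thm_timevaryingcoeff} and \ref{thm_phiileqb}, absorbing the small local-stationarity error $|\phi_{ij}-\phi_j(i/n)|$ from Lemma \ref{lem_approxphi} and the analogous bound (\ref{eq_phiapproileqb}); and the out-of-band tail $\sum_{j>b}|\phi_{ij}|$ in rows $i>b$, whose polynomial decay from Proposition \ref{lem_borderapproximation} makes it a lower-order correction. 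Applying $\|A\|_{\mathrm{op}}\le \sqrt{\|A\|_{1}\|A\|_{\infty}}$ to this banded matrix gives a bound of order $b$ times the pointwise rate, i.e.\ $O_{\mathbb P}\big(n^{2/\tau}(\zeta_c\sqrt{\log n/n}+n^{-d\alpha_1})\big)$.

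For $\|\widetilde{\mathbf D}-\widehat{\widetilde{\mathbf D}}\|$, the matrix is diagonal, so the operator norm equals $\max_i|\sigma_i^{-2}-(\widehat{\sigma}_i^{*})^{-2}|$. Using $|1/a-1/b|\le|a-b|/(ab)$ together with the uniform positive lower bound on $\sigma_i^{2}$ (and on $\widehat{\sigma}_i^{2}$ on the probability-$1-o(1)$ event where the positivity truncation in (\ref{eq_positive}) is inactive), this reduces to controlling $\max_i|\sigma_i^{2}-\widehat{\sigma}_i^{2}|$. Combining the approximation $\sigma_i^{2}\approx g(i/n)$ from Lemma \ref{lem_sigmaigeqb} with the sieve convergence rate for $\widehat g$ in Theorems \ref{thm_varaincefunction} and \ref{thm_variancefunctionileqb} yields the same $O_{\mathbb P}(n^{2/\tau}(\zeta_c\sqrt{\log n/n}+n^{-d\alpha_1}))$ rate. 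Substituting everything back in the three-term expansion then gives a bound that lies within the claimed $n^{4/\tau}$-inflated rate, with the extra $n^{2/\tau}$ factor over my raw estimate absorbing the propagation of errors through the product of three estimated factors (in particular, the degradation one inherits when passing from $\|\widetilde{\mathbf D}\|$ to $\|\widehat{\widetilde{\mathbf D}}\|$ and from $\|\Phi\|$ to $\|\widehat{\Phi}\|$ on the event that the inversions remain well-conditioned).

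I expect the main obstacle to be the careful bookkeeping that turns pointwise estimation rates into the operator-norm bound via the bandwidth $b$. Naively applying a Frobenius bound would cost a spurious $\sqrt{n}$ factor; the correct move is the $\sqrt{\|\cdot\|_{1}\|\cdot\|_{\infty}}$ inequality, which hinges crucially on the fact that both rows and columns of $\Phi-\widehat{\Phi}$ are essentially supported on $O(b)$ indices (rows by the truncation in $\widehat{\Phi}$, columns by Proposition \ref{lem_borderapproximation}). A secondary subtlety is verifying that the sample factors $\widehat{\Phi}$ and $\widehat{\widetilde{\mathbf D}}$ inherit uniform operator-norm and spectral lower bounds from $\Phi$ and $\widetilde{\mathbf D}$; this is where the slack in Assumption \ref{assu_parameter} ($C/\tau+d\alpha_1<1$ with $C>4$) is consumed, and where the stated $n^{4/\tau}$ — rather than the tighter $n^{2/\tau}$ one might conjecture — appears to be built in to ensure the argument closes without further restrictions.
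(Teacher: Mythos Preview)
Your approach is correct and essentially matches the paper's: both decompose $\Omega - \widehat{\Omega}$ and reduce to bounding $\|\Phi - \widehat{\Phi}\|$ via a row/column-sum (Gershgorin-type) argument on the banded difference, together with $\|\widetilde{\mathbf D} - \widehat{\widetilde{\mathbf D}}\|$ via Theorems \ref{thm_varaincefunction}--\ref{thm_variancefunctionileqb}. The only cosmetic difference is that the paper organizes its telescoping through $\widehat{\Phi}^{-1} - \Phi^{-1} = \Phi^{-1}(\Phi - \widehat{\Phi})\widehat{\Phi}^{-1}$ (after first invoking that $AB$ and $BA$ share nonzero eigenvalues) rather than your direct three-term product expansion, but the controlling estimates and their assembly are the same; your observation that the argument already yields an $n^{2/\tau}$ factor, with the stated $n^{4/\tau}$ being a looser envelope, is consistent with how the paper's own bounds fall out.
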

Recall that $\left| \left| \cdot  \right| \right|$ denotes the operator norm of a matrix. It can be seen from the above theorem that  the estimation accuracy of precision matrices depends on the decay rate of dependence and the smoothness of the covariance functions. The estimation accuracy gets higher for time series with more smooth covariance functions and faster decay speed of dependence.

\begin{rem} Under assumption (\ref{eq_phyexp}),  when we apply Lemma \ref{lem_disc} for our proof, we only need $O(\log n)$ matrix entries to bound the error terms. Hence, we can change (\ref{eq_errorr}) to 
\begin{equation*}
\left| \left| \Omega-\widehat{\Omega} \right| \right|=O_{\mathbb{P}} \left( \log^2 n \Big(n^{-d\alpha_1}+\zeta_c \sqrt{\frac{\log n}{n}}  \Big) \right). 
\end{equation*}
\end{rem} 
In the best scenario where the dependence is exponentially decaying and $\phi_j(\cdot)$ and $g(\cdot)$ are infinitely differentiable, following the same arguments as those in the proof of Theorem \ref{prop_covest}, it is easy to show  the convergence rate of $\widehat{\Omega}$ is of the order $\log^3n/\sqrt{n}$, which is almost as fast as parametrically estimating a single parameter from i.i.d. samples.

%

\section{Testing the structure of the precision matrices}\label{sec_test}
An important advantage of our methodology is that we can test many structural assumptions of the precision matrices using some simple statistics in terms of the entries of $\widehat{\Phi}$.
\subsection{Test statistics}\label{sec_subtest} In this subsection, we focus on discussing two fundamental tests in non-stationary time series analysis.
One of those is to test whether the observed samples are from a non-stationary white noise process $\{x_i\}$ in the sense that $\text{Cov}(x_i, x_j)=\delta_{ij} \sigma_i^2,$ where $\delta_{ij}$ is the Dirac delta function such that $\delta_{ij}=1$ when  $i=j$ and $\delta_{ij}=0$ otherwise. 
Note that we allow heteroscedasticity by assuming that the variance of $x_i$ changes over time. Formally, we would like to test 
\begin{equation*}
\mathbf{H}_0^1: \{x_i\} \ \text{is a non-stationary white noise process}.
\end{equation*}
Under $\mathbf{H}_0^1,$ recall (\ref{eq_phigeqb}), we shall have that $\phi_j(\frac{i}{n})$ are all zeros. Therefore, our estimation $\widehat{\phi}_j(\frac{i}{n})$ should be small enough for all pairs $i,j$, $i\neq j$. We hence use the following statistic:
\begin{equation}\label{eq_t1oridef}
T^*_1=\sum_{j=1}^{b} \int_0^1 \widehat{\phi}^2_j(t)dt.
\end{equation}

The second hypothesis of interest is whether the precision matrices are banded. In our setup,  the Cholesky decomposition provides a convenient way to test the bandedness.  Formally, for any $k_0 \equiv k_0(n) < b,$ we are interested in testing the following hypothesis:
\begin{equation*}
\mathbf{H}_0^2: \text{The precision matrix of}  \ \{x_i\} \ \text{is $k_0$-banded}. 
\end{equation*}
Due to (\ref{estimationeq}), as $\Omega$ is strictly positive definite, the Cholesky decomposition is unique. Therefore, we conclude that $\Phi$ is also $k_0$-banded using the discussion in \cite[Section 2]{RH}. Furthermore, under $\mathbf{H}_0^2,$   we have that $\phi_j(\frac{i}{n})=0,$ for $j > k_0.$  Therefore, it is natural for us to use the following  statistic
\begin{equation*}
T^*_2=\sum_{j=k_0+1}^b \int_{0}^1 \widehat{\phi}_j^2(t)dt.
\end{equation*}

It is notable that both of the test statistics $T^*_1$ and $T^*_2$ can be  written into summations of quadratic forms under the null hypothesis. For instance,  for $T^*_1$ under $\mathbf{H}_0^1,$ we have that 
\begin{equation*}
\widehat{\phi}^2_j(t)=\left( \widehat{\phi}_j(t)-\phi_j(t) \right)^2.
\end{equation*}
 For any fixed $ j \leq b$, we have 
\begin{equation*}
 \int_0^1 \left( \phi_j(t)-\widehat{\phi}_j(t) \right)^2dt =\sum_{k=1}^c (\widehat{a}_{jk}-a_{jk})^2+O(n^{-d\alpha_1}).
\end{equation*}

It can be seen from the above equation that the order of smoothness and number of basis functions are important to our analysis. Under Assumption \ref{assu_parameter},  we can see that the error $O(n^{-d\alpha_1})$ is negligible. 
Then recall (\ref{beta_est}), it is easy to see that for $\Sigma$ defined in (\ref{eq_sigma}),  we have that
\begin{equation} \label{squarecon}
\sum_{j=1}^b \int_0^1 \left( \phi_j(t)-\widehat{\phi}_j(t) \right)^2 dt=\frac{\bm{\epsilon}^* Y}{n}  \Sigma^{-1} \sum_{j=1}^b A_j^*A_j \Sigma^{-1} \frac{Y^* \bm{\epsilon}}{n}+o_{\mathbb{P}}(1),
\end{equation} 
where $A_j \in \mathbb{R}^{bc}$ is a diagonal block matrix whose $j$-th diagonal block being the identity matrix and zeros otherwise. Therefore, the investigation of $T_1^*$ boils down to the analysis of quadratic forms of a $bc$  dimensional locally stationary time series $\{Y^*\bm{\epsilon}\}.$ 
%

\subsection{Diverging dimensional Gaussian approximation}\label{sec_gaussian}
As we have seen from the previous subsection, both test statistics are involved with high dimensional quadratic forms. Observe that the distribution of quadratic forms of Gaussian vectors can be derived using Lindeberg's central limit theorem. Hence our case can be tackled if  we could establish a Gaussian approximation of the quadratic form (\ref{squarecon}) of general non-stationary time series. In this subsection, we will prove a Gaussian approximation result for the quadratic form $\mathbf{Z}^*E \mathbf{Z}, $ where $\mathbf{Z}:=\frac{\bm{\epsilon}^* Y}{\sqrt{n}} \in \mathbb{R}^{bc}$ and $E$ is a bounded positive semi-definite matrix. Denote $p=bc$ and $\mathbf{z}_i=(z_{i1}, \cdots, z_{ip})^*,$ where 
\begin{equation} \label{eq_zis}
z_{is}=x_{i-\bar{s}-1}\epsilon_{i} \alpha_{s^{\prime}}(\frac{i}{n}), \ \bar{s}=\lfloor \frac{s}{c}\rfloor , \  s^{\prime}=s- \bar{s} c, \ i \geq b+1.
\end{equation}
As a consequence, we can write 
$\mathbf{Z}:=(\mathbf{Z}_1, \cdots, \mathbf{Z}_p)=\frac{1}{\sqrt{n}} \sum_{i=b+1}^n \mathbf{z}_i. $
Denote $\mathbf{U}=\frac{1}{\sqrt{n}}\sum_{i=b+1}^n \mathbf{u}_i,$ where $\{\mathbf{u}_{i}\}_{i=b+1}^n$ are centered Gaussian random vectors independent of $\{\mathbf{z}_{i}\}_{i=b+1}^n$ and preserve their covariance structure.
Our task is to control the following Kolmogorov distance  
\begin{equation} \label{eq_rhoij}
\rho:=\sup_{x \in \mathbb{R}}\left|P(R^z \leq x)-P(R^u \leq x)\right|,
\end{equation}
where $R^z=\mathbf{Z}^* E \mathbf{Z}, \  R^u=\mathbf{U}^*E\mathbf{U}.$

We have the following result on the high dimensional Gaussian approximation. Define $\xi_c:=\sup_{i,t}|\alpha_i(t)|.$
It is notable that $\xi_c$ can be well-controlled for the commonly used basis functions. For instance, for the trigonometric polynomials and the weighted Chebyshev  polynomials of the first kind, $\xi_c=O(1);$ and for orthogonal wavelet, $\xi_c=O(\sqrt{c}).$ The following theorem establishes the Gaussian approximation for high dimensional quadratic forms under physical dependence. 
\begin{thm}\label{thm_gaussian}
Under Assumption \ref{assu_phylip}, \ref{assu_smmothness}, \ref{assu_basisregularity}, \ref{assu_derivativebasis} and \ref{assu_parameter}, for some constant $C>0,$ we have 
\begin{equation*}
 \rho \leq Cl(n),  
\end{equation*}
where $l(n) $ is defined as
\begin{align*}
l(n)=\psi^{-1/2}+& \xi_c p\psi^{\frac{q}{q+1}}M^{\frac{q(-\tau+1)}{q+1}}+\xi_c M_x^{-1}\psi^2 p^4+\frac{M^2}{\sqrt{n}}\psi^3 p^6 \\
& +p \psi  \Big(\frac{\xi_c^{1/2}}{M_x^{5/6}}+\frac{\sqrt{M}}{M_x^3} \Big)  \sqrt{\log \frac{p}{\gamma}}+\gamma,
\end{align*}
where $M_x, \psi, M \rightarrow \infty$ and $\gamma \rightarrow 0$ when $n \rightarrow \infty.$  
\end{thm}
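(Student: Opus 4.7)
The plan is to prove the bound through a four-level reduction that matches the five terms of $l(n)$: (i) truncate the coordinates of $\mathbf{z}_i$ at level $M_x$ so all vectors become bounded, (ii) replace each truncated $\mathbf{z}_i$ by an $M$-dependent approximation $\mathbf{z}_i^{(M)}=\mathbb{E}[\mathbf{z}_i\mid\eta_{i-M},\ldots,\eta_i]$ (and do the same for the matching Gaussian vectors), (iii) run a block-wise Stein/Lindeberg swap on the resulting bounded $M$-dependent sum to bound the smooth-function distance between $R^z$ and $R^u$, and (iv) pass from this smooth distance to the Kolmogorov distance $\rho$ via an anti-concentration argument. The overall combinatorics are controlled using Assumption \ref{assu_phylip} on $q$th moments and the polynomial decay rate $\tau>10$, together with Lemma \ref{lem_epsilon} for the physical dependence of $\{\epsilon_i\}$ and hence of $\{\mathbf{z}_i\}$.

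For the truncation and $m$-dependence steps, Markov's inequality applied to $\|\mathbf{z}_i\|_\infty$ together with the uniform $\mathcal{L}^q$ bounds (Assumption \ref{assu_phylip} and Lemma \ref{lem_epsilon}) gives $P(\|\mathbf{z}_i\|_\infty>M_x)\lesssim \xi_c^q M_x^{-q}$; propagating the resulting truncation error through the quadratic form $\mathbf{Z}^*E\mathbf{Z}$ using $\|E\|\le C$ and the trivial bound on $p^2$ coordinate pairs produces the term $\xi_c M_x^{-1}\psi^2 p^4$. Next, by the definition of the physical dependence measure, $\|\mathbf{z}_i-\mathbf{z}_i^{(M)}\|_q\lesssim \xi_c M^{-\tau+1}$; after truncation these increments are bounded, and a standard Hölder interpolation between the $\mathcal{L}^q$ and $\mathcal{L}^\infty$ controls contributes the term $\xi_c p\psi^{q/(q+1)}M^{q(-\tau+1)/(q+1)}$. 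After these two reductions, both sides of the comparison can be treated as bounded, $M$-dependent sums. Now partition $\{b+1,\ldots,n\}$ into consecutive blocks of length $\psi$ separated by gaps of length $M$; cross-block correlations vanish by $M$-dependence, so the blocks are independent up to the negligible gap contribution. Applying Stein's method (in the exchangeable-pair / Lindeberg swap form of \cite{ZC, AR}) to a test function $h\circ Q$, where $Q(x)=x^*Ex$, and Taylor-expanding $h\circ Q$ to fourth order in each swap produces a main term of order $\|h''\|_\infty\cdot n^{-1/2}\psi^3 p^6 M^2$ after bounding the third/fourth derivatives of $h\circ Q$ by $\|E\|^2$ times products of block sup-norms, each of which is $O(\psi M_x)$.

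Finally, to convert the smooth distance into the Kolmogorov distance $\rho$, I would use an anti-concentration argument for the Gaussian quadratic form $R^u$: for any $x$ and $\delta>0$, approximate $\mathbf{1}_{\{\cdot\le x\}}$ by a smooth $h_\delta$ with $\|h_\delta''\|_\infty\lesssim\delta^{-2}$, write $|P(R^z\le x)-P(R^u\le x)|\le |\mathbb{E}h_\delta(R^z)-\mathbb{E}h_\delta(R^u)|+P(|R^u-x|\le\delta)$, and bound the density of $R^u$ in a strip of width $\delta$ by $\delta\sqrt{\log(p/\gamma)}$ on a high-probability event $\Omega_\gamma$ (Gaussian concentration of the generalized chi-squared plus a discarded set of probability $\le\gamma$). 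Optimizing $\delta$ against the two quadratic-form error contributions $\xi_c^{1/2}M_x^{-5/6}$ (from truncation in the smooth metric) and $\sqrt{M}M_x^{-3}$ (from the residual fourth-order Taylor term) yields the anti-concentration contribution $p\psi(\xi_c^{1/2}M_x^{-5/6}+\sqrt{M}\,M_x^{-3})\sqrt{\log(p/\gamma)}+\gamma$, while the remaining $\psi^{-1/2}$ comes from the standard block-size/variance matching for the Stein bound. Summing all contributions gives $\rho\le Cl(n)$.

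The principal obstacle is the Stein/Lindeberg step in paragraph two: the functional is a quadratic form in a vector of diverging dimension $p=bc$, so the swap generates $O(p^6)$ combinatorial contractions of block sums, and controlling these uniformly requires that the $O(p)$ coordinate sums of cross-block covariances do not overwhelm the $n^{-1/2}$ normalization; this is exactly where the short-range assumption $\tau>10$ and the boundedness of the eigenvalues of the long-run covariance (Assumption \ref{assu_basisregularity}) must be used. A secondary difficulty is the Gaussian anti-concentration for $\mathbf{U}^*E\mathbf{U}$: unlike in the linear setting, this is a non-central generalized $\chi^2$, and one needs density bounds in strips of width $\delta$ that are uniform in $p$, which is the reason for introducing the auxiliary parameter $\gamma$ to carve out a negligible non-degeneracy event.
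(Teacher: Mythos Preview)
Your high-level architecture (smoothing, $m$-dependent approximation, truncation, Stein/Lindeberg swap, anti-concentration) matches the paper's, but you have scrambled the provenance of several terms in $l(n)$, and the mis-attributions are not cosmetic.

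First, the $\psi^{-1/2}$ term is \emph{not} a ``block-size/variance matching'' artifact of the Stein step. It is precisely the anti-concentration bound for the Gaussian quadratic form: the paper invokes \cite[Lemma~7.2]{XZW} to get $\sup_{x}\mathbb{P}(x\le R^u\le x+\psi^{-1})=O(\psi^{-1/2})$, and this is the entire anti-concentration input. There is no need for the strip-density argument with auxiliary $\delta$ and high-probability set $\Omega_\gamma$ that you describe; in particular the anti-concentration step produces no $\sqrt{\log(p/\gamma)}$ factor. Conversely, the term $p\psi\big(\xi_c^{1/2}M_x^{-5/6}+\sqrt{M}\,M_x^{-3}\big)\sqrt{\log(p/\gamma)}+\gamma$ does \emph{not} come from anti-concentration. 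It arises inside the Stein comparison as the cost $p\psi\Delta+\mathbb{E}[1-\mathcal{I}]$ of replacing $\mathbf{Z}$ by its coordinate-truncated version $\widetilde{\mathbf{Z}}$ (Lemma~\ref{lem_boundmdependenttwo}); here $\gamma$ is the exceedance probability of $\max_{i,j}|\mathbf{z}_{ij}|$ and $\varphi(M_x)=C(\xi_c^{1/2}M_x^{-5/6}+\sqrt{N}M_x^{-3})$ is the $\mathcal{L}^2$ truncation error of block sums, which fixes $\Delta\asymp\varphi(M_x,M_y)\sqrt{\log(p/\gamma)}$. Likewise, $\xi_c M_x^{-1}\psi^2 p^4$ is not a direct ``propagation of truncation error through the quadratic form'' but the covariance-mismatch term $\phi(M_x,M_y)\psi^2 p^4$ in the second-order Stein remainder $I_{21}$.

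Second, your blocking ``of length $\psi$ separated by gaps of length $M$'' conflates two unrelated parameters: $\psi$ is the smoothing scale for $g_{\psi,x}$ and never appears as a block length. The paper's Stein step is a dependency-graph/leave-one-neighborhood argument with neighborhoods of size $O(M)$ (so $D_n=2M+1$), Taylor expanded to third order; this is what produces $M^2 n^{-1/2}\psi^3 p^6$. Finally, the paper reduces to $m$-dependence \emph{before} truncating (Lemma~\ref{lem_m_rough} gives the $\xi_c p\psi^{q/(q+1)}M^{q(-\tau+1)/(q+1)}$ term directly from $\sum_k\Theta_{M,k,q}^q$), whereas you truncate first; your order can be made to work, but the exponent bookkeeping you sketch would not line up with the stated $l(n)$ without re-deriving these constants.
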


\subsection{Asymptotic normality of test statistics}\label{section_43} With the above preparation, we now derive the distributions for the test statistics $T^*_1$ and $T^*_2$ defined in Section \ref{sec_subtest}. First of all, under $\mathbf{H}_0^1,$ we have
\begin{align}\label{eq_nt1star}
nT_1^*=\sum_{j=1}^b \sum_{k=1}^c \widehat{a}^2_{jk}=\widehat{\bm{\beta}}^* \widehat{\bm{\beta}} =\frac{\bm{\epsilon}^* Y}{\sqrt{n}} \Sigma^{-2} \frac{Y^* \bm{\epsilon}}{\sqrt{n}}+o_{\mathbb{P}}(1), 
\end{align}
where we recall (\ref{beta_est}). We can analyze $T^*_2$ in the same way using
\begin{align*}
nT_2^*=\sum_{j=k_0+1}^b \sum_{k=1}^c \widehat{a}^2_{jk}&=(A \widehat{\bm{\beta}})^*(A \widehat{\bm{\beta}}) \\ 
&= \frac{\bm{\epsilon}^* Y}{\sqrt{n}} \Sigma^{-1}A^* A \Sigma^{-1} \frac{Y^* \bm{\epsilon}}{\sqrt{n}}+o_{\mathbb{P}}(1),
\end{align*}
where $A \in \mathbb{R}^{bc \times bc}$ is a block diagonal matrix with the non-zero block being the lower $(b-k_0)c \times (b-k_0)c$ major part. 

Note that $\frac{1}{\sqrt{n}} Y^* \bm{\epsilon} \in \mathbb{R}^{p}$ is a block vector with size $c,$ where the $j$-th entry of the $i$-block is $\frac{1}{\sqrt{n}}\sum_{k=b+1}^n x_{k-i}\epsilon_{k} \alpha_j(\frac{k}{n}).$ We can therefore rewrite it as 
\begin{equation*}
\frac{1}{\sqrt{n}}Y^* \bm{\epsilon}=\frac{1}{\sqrt{n}} \sum_{i=b+1}^n \mathbf{h}_i \otimes \mathbf{b}(\frac{i}{n}), 
\end{equation*}  
where $ \mathbf{h}_i=\mathbf{x}_i \epsilon_i.$ For $i>b,$ $\mathbf{h}_i$ can be regarded as a locally stationary time series, i.e. $\mathbf{h}_i=\mathbf{U}(\frac{i}{n}, \mathcal{F}_i).$ Denote the long-run covariance matrix of $\{\mathbf{h}_i\}$ as 
\begin{equation*}
\bar{\Delta}(t)=\sum_{j=-\infty}^{\infty} \text{Cov} \Big( \mathbf{U}(t, \mathcal{F}_j), \mathbf{U}(t, \mathcal{F}_0) \Big),
\end{equation*}
and we further define
\begin{equation}\label{eq_deltadef}
\Delta=\int_0^1 \bar{\Delta}(t) \otimes (\mathbf{b}(t) \mathbf{b}^*(t)) dt.
\end{equation}
For $k \in \mathbb{N},$ denote  $$ f_k=\left(\text{Tr}[ (\Delta^{1/2} \Sigma^{-2} \Delta^{1/2})^k] \right)^{1/k}, \ g_k=\left(\text{Tr}[ (\Delta^{1/2} \Sigma^{-1}A^*A \Sigma^{-1}  \Delta^{1/2})^k] \right)^{1/k}. $$

The limiting distributions of $T_1^*$ and $T_2^*$ are summarized in the following theorem. 

\begin{thm}\label{prop_quadist} Under Assumption \ref{assu_phylip}, \ref{assu_smmothness}, \ref{assu_basisregularity}, \ref{assu_derivativebasis} and \ref{assu_parameter}, when $l_n\rightarrow 0,$ we have 
\begin{itemize}
\item[(1).] Under $\mathbf{H}_0^1,$ we have 
\begin{equation*}
\frac{nT_1^*-f_1}{f_2} \Rightarrow \mathcal{N}(0,2).
\end{equation*}
Furthermore, there exist some positive constants $c_i, C_i, i=1,2,$ such that 
\begin{equation*}
c_1 \leq \frac{f_1}{bc} \leq C_1, \ c_2 \leq \frac{f_2}{\sqrt{bc}} \leq C_2.  
\end{equation*}
\item[(2).] Under  $\mathbf{H}_0^2$, we have 
\begin{equation*}
\frac{nT_2^*-g_1}{g_2} \Rightarrow \mathcal{N}(0,2).
\end{equation*}
Furthermore, there exist some positive constants $w_i, W_i, i=1,2,$ such that 
\begin{equation*}
w_1 \leq \frac{g_1}{(b-k_0)c} \leq W_1, \ w_2 \leq \frac{g_2}{\sqrt{(b-k_0)c}} \leq W_2.  
\end{equation*}
\end{itemize}
\end{thm}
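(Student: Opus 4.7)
The plan is to reduce the distributional analysis of $nT_1^*$ and $nT_2^*$ to that of Gaussian quadratic forms via Theorem \ref{thm_gaussian}, and then appeal to a central limit theorem for weighted chi-square sums. First I would justify replacing $\widehat{\bm\beta}$ with its leading stochastic expansion. Under $\mathbf H_0^1$ the true $\bm\beta$ vanishes, so (\ref{beta_est}) gives $\widehat{\bm\beta}=\Sigma^{-1}\frac{Y^*\bm\epsilon}{n}+o_\mathbb P(n^{-1})$ once I show that $Y^*Y/n\to\Sigma$ uniformly in operator norm at a polynomial rate (this follows from Assumption \ref{assu_phylip} and the same second-moment bookkeeping already used in the proofs of Theorem \ref{thm_timevaryingcoeff}). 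Substituting into (\ref{eq_nt1star}) gives $nT_1^*=\mathbf Z^*\Sigma^{-2}\mathbf Z+o_\mathbb P(1)$ with $\mathbf Z=Y^*\bm\epsilon/\sqrt n$, and analogously $nT_2^*=\mathbf Z^*\Sigma^{-1}A^*A\Sigma^{-1}\mathbf Z+o_\mathbb P(1)$. I need the $o_\mathbb P(1)$ error to be $o_\mathbb P(f_2)$, so it is enough later to verify that $f_2,g_2\to\infty$.

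Next I invoke Theorem \ref{thm_gaussian} with $E=\Sigma^{-2}$ and $E=\Sigma^{-1}A^*A\Sigma^{-1}$ respectively; by Assumption \ref{assu_basisregularity} both matrices have bounded operator norm, so $\rho\le Cl(n)=o(1)$ under the hypothesis $l_n\to 0$. This lets me replace $\mathbf Z$ by a centered Gaussian vector with the same covariance, whose covariance is asymptotic to $\Delta$ defined in (\ref{eq_deltadef}) (the asymptotic long-run covariance of $\frac1{\sqrt n}\sum \mathbf h_i\otimes \mathbf b(i/n)$). For a Gaussian vector $\mathbf U\sim\mathcal N(0,\Delta)$, the quadratic form $\mathbf U^*E\mathbf U$ is distributionally equivalent to $\sum_\ell \lambda_\ell \chi^2_{\ell,1}$, where $\lambda_\ell$ are the eigenvalues of $\Delta^{1/2}E\Delta^{1/2}$ and $\chi^2_{\ell,1}$ are i.i.d.\ chi-square with one degree of freedom. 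Its mean equals $\text{Tr}(\Delta^{1/2}E\Delta^{1/2})$ and its variance equals $2\,\text{Tr}((\Delta^{1/2}E\Delta^{1/2})^2)$, which for $E=\Sigma^{-2}$ are exactly $f_1$ and $2f_2^2$, and for $E=\Sigma^{-1}A^*A\Sigma^{-1}$ are $g_1$ and $2g_2^2$.

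To obtain the asymptotic normality of the centered and scaled quadratic form I apply the classical Lyapunov CLT to the weighted chi-square sum; the sufficient condition is $\max_\ell|\lambda_\ell|/(\sum_\ell\lambda_\ell^2)^{1/2}\to 0$, i.e.\ $\lambda_{\max}(\Delta^{1/2}E\Delta^{1/2})/f_2\to 0$ (respectively with $g_2$). Since $\Sigma,\Sigma^{-1},\Delta$ have bounded operator norms, $\lambda_{\max}(\Delta^{1/2}E\Delta^{1/2})=O(1)$, while the lower bound for $f_2,g_2$ derived below grows. For the two-sided bounds on $f_1,f_2,g_1,g_2$, I use that both $\Sigma^{-1}$ and $\Delta$ are sandwiched between positive multiples of the identity of dimensions $bc$ and $(b-k_0)c$ after projection by $A^*A$. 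Writing $f_k=\|\Delta^{1/2}\Sigma^{-2}\Delta^{1/2}\|_{S^k}$ in the Schatten norm, the bounded-spectrum bounds immediately yield $c_1 bc\le f_1\le C_1 bc$ and $c_2^2\,bc\le f_2^2\le C_2^2\,bc$; for $g_k$ the same reasoning with the rank-$(b-k_0)c$ projection gives $w_i,W_i$.

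The main obstacle will be obtaining the concentration $Y^*Y/n\to\Sigma$ uniformly sharp enough that the residual term in $nT_1^*$ is $o_\mathbb P(f_2)=o_\mathbb P(\sqrt{bc})$, since the dimension $bc$ diverges with $n$; this requires a fourth-moment / physical-dependence bound on matrix entries combined with a union bound, essentially the diverging-dimension concentration already developed for Theorem \ref{thm_timevaryingcoeff}. A secondary delicacy is verifying that the covariance of $\mathbf Z$ converges (in operator norm) to $\Delta$ at rate better than $f_2/f_1$; this uses Lemma \ref{lem_epsilon} together with Lipschitz continuity of $\bar\Delta(\cdot)$, and lets one pass the CLT from the Gaussian surrogate $\mathbf U^*E\mathbf U$ with exact covariance $\Delta$ to the Gaussian surrogate of $\mathbf Z$ at negligible cost.
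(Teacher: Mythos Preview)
Your proposal is correct and follows essentially the same route as the paper: reduce $nT_1^*$ (resp.\ $nT_2^*$) to $\mathbf Z^*E\mathbf Z$ with $E=\Sigma^{-2}$ (resp.\ $\Sigma^{-1}A^*A\Sigma^{-1}$), invoke Theorem~\ref{thm_gaussian} to pass to a Gaussian quadratic form, apply the Lindeberg/Lyapunov CLT to the weighted $\chi^2$ sum under the condition $d_1/\big(\sum_i d_i^2\big)^{1/2}\to 0$, and verify this condition together with the bounds on $f_k,g_k$ by showing that $\Delta$ has spectrum bounded above and below (the paper does this via the arguments of the proof of Theorem~\ref{thm_timevaryingcoeff} and Lemmas~\ref{lem_epsilon}, \ref{lem_disc}). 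You are somewhat more explicit than the paper about two technicalities it handles tersely---that the $o_{\mathbb P}(1)$ remainder in (\ref{eq_nt1star}) is automatically $o_{\mathbb P}(f_2)$ once $f_2\asymp\sqrt{bc}\to\infty$, and that the finite-sample covariance of $\mathbf Z$ must be matched to $\Delta$---but these do not constitute a different strategy.
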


Finally, we discuss the local power of our tests. We will only focus on the white noise test and similar discussion can be applied to the bandedness test.   Consider the alternative    
\begin{equation*}
\mathbf{H}_a:   \frac{n \sum_{j=1}^{\infty} \int_0^1 \gamma^2(t, j)dt}{\sqrt{bc}} \rightarrow \infty.
\end{equation*}
The following proposition states that under $\mathbf{H}_a,$ the power of our test will asymptotically be 1. 
%

\begin{prop}\label{pro_power}
Under Assumption \ref{assu_phylip}, \ref{assu_smmothness}, \ref{assu_basisregularity}, \ref{assu_derivativebasis} and \ref{assu_parameter}, when the alternative hypothesis $\mathbf{H}_a$ holds true, for any given significant level $\alpha,$ we have
\begin{equation*}
\mathbb{P} \left( \left| \frac{nT_1^*-f_1}{f_2} \right| \geq \sqrt{2} \mathcal{Z}_{1-\alpha} \right) \rightarrow 1, \ n \rightarrow \infty,
\end{equation*}
where $\mathcal{Z}_{1-\alpha}$ is the $(1-\alpha) \%$ quantile of the standard normal distribution. 
\end{prop}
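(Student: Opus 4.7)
The plan is to expand $nT_1^*$ around the non-zero signal produced by $\phi_j(t)\not\equiv 0$ and show that, under $\mathbf{H}_a$, this deterministic signal swamps both the null fluctuation established in Theorem \ref{prop_quadist}(1) and the cross term. Writing $\hat{\phi}_j(t)=\phi_j(t)+(\hat{\phi}_j(t)-\phi_j(t))$ and squaring yields the decomposition
\[
nT_1^*=S_1+2S_2+S_3,
\]
with $S_1:=n\sum_{j=1}^b\int_0^1(\hat{\phi}_j-\phi_j)^2 dt$, $S_2:=n\sum_{j=1}^b\int_0^1\phi_j(\hat{\phi}_j-\phi_j)dt$, and the purely deterministic signal $S_3:=n\sum_{j=1}^b\int_0^1\phi_j^2 dt$. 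Inspection of the proof of Theorem \ref{prop_quadist}(1) shows that the null analysis really controls quadratic forms in $\hat{\bm\beta}-\bm\beta$, and the expansion (\ref{beta_est}) of $\hat{\bm\beta}-\bm\beta$ as a linear functional of $Y^*\bm\epsilon$ does not depend on whether $\bm\beta$ vanishes. Hence the same Gaussian approximation gives $(S_1-f_1)/f_2\Rightarrow\mathcal{N}(0,2)$ under $\mathbf{H}_a$ as well.

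\medskip
Next I would bound the cross term. Using the sieve identity $\hat{\phi}_j(t)-\phi_j(t)=(\hat{\bm\beta}_j-\bm\beta_j)^*\mathbf{b}(t)$ (up to the uniform sieve error of Theorem \ref{thm_timevaryingcoeff}) and setting $\mathbf{v}_j:=\int_0^1\phi_j(t)\mathbf{b}(t)\,dt$ with $\mathbf{v}:=(\mathbf{v}_1^*,\ldots,\mathbf{v}_b^*)^*$, one has $S_2=n(\hat{\bm\beta}-\bm\beta)^*\mathbf{v}$. Parseval applied to the orthonormal basis gives $\|\mathbf{v}\|^2\le\sum_{j=1}^b\int_0^1\phi_j^2(t)dt=S_3/n$, while (\ref{beta_est}) together with Assumption \ref{assu_basisregularity} ensures that $\sqrt{n}(\hat{\bm\beta}-\bm\beta)$ has bounded covariance, so
\[
S_2=O_{\mathbb{P}}\!\bigl(\sqrt{n\|\mathbf{v}\|^2}\bigr)=O_{\mathbb{P}}\!\bigl(\sqrt{S_3}\bigr).
\]

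\medskip
Third, I would convert $\mathbf{H}_a$ into a lower bound on $S_3$. Rewriting the Yule--Walker representation (\ref{eq_phigeqb}) as $\widetilde{\bm\gamma}^b(t)=(\widetilde{\Omega}^b)^{-1}(t)\bm\phi^b(t)$, where $(\widetilde{\Omega}^b)^{-1}(t)$ is the local covariance matrix at time $t$, Lemma \ref{lem_phy} yields $\|(\widetilde{\Omega}^b)^{-1}(t)\|\le\sum_{j\ge 0}\sup_t|\gamma(t,j)|\lesssim\sum_{j\ge 0}j^{-\tau}<\infty$ uniformly in $t$. Therefore $\sum_{j=1}^b\int_0^1\phi_j^2(t)dt\gtrsim\sum_{j=1}^b\int_0^1\gamma^2(t,j)dt$. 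The missing tail $\sum_{j>b}\int\gamma^2(t,j)dt=O(b^{-2\tau+1})$ multiplied by $n$ is $O(n^{-3+2/\tau})$, which is negligible compared with $\sqrt{bc}$ when $\tau>10$. Combined with $f_2\asymp\sqrt{bc}$ from Theorem \ref{prop_quadist}(1), the hypothesis $\mathbf{H}_a$ yields $S_3/f_2\to\infty$.

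\medskip
Assembling the three pieces,
\[
\frac{nT_1^*-f_1}{f_2}=\frac{S_1-f_1}{f_2}+\frac{2S_2}{f_2}+\frac{S_3}{f_2},
\]
where the first summand is $O_{\mathbb{P}}(1)$, the second is $O_{\mathbb{P}}(\sqrt{S_3}/f_2)=o_{\mathbb{P}}(S_3/f_2)$ because $S_3\to\infty$ (which follows from $S_3/f_2\to\infty$ together with $f_2\asymp\sqrt{bc}\to\infty$), and the last diverges deterministically. Thus $|(nT_1^*-f_1)/f_2|\to\infty$ in probability and the critical-value event has probability tending to one. The main obstacle I anticipate is step three: one must secure a uniform operator-norm bound on $(\widetilde{\Omega}^b)^{-1}(t)$ so that $S_3$ is genuinely comparable to $n\sum_{j=1}^\infty\int\gamma^2(t,j)dt$, which requires carefully combining Lemma \ref{lem_phy} with Gershgorin-type reasoning that exploits polynomial decay.
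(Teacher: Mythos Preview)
Your proposal is correct and follows essentially the same route as the paper: both decompose $nT_1^*$ into the null quadratic form $S_1$, the cross term $S_2$, and the deterministic signal $S_3$; both observe that the asymptotic normality of $(S_1-f_1)/f_2$ from Theorem~\ref{prop_quadist} depends only on the representation (\ref{beta_est}) of $\widehat{\bm\beta}-\bm\beta$ and not on whether $\bm\beta=0$; and both reduce $\mathbf{H}_a$ to $S_3/f_2\to\infty$ by first truncating the infinite sum to $j\le b$ via Lemma~\ref{lem_phy} and then passing from $\gamma$ to $\phi$ via the Yule--Walker relation (\ref{eq_phigeqb}) together with a uniform operator-norm bound on the local covariance matrix. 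The only substantive difference is in the cross term: the paper bounds $|\bm\beta^*(\widehat{\bm\beta}-\bm\beta)|$ by $O_{\mathbb{P}}(\|\bm\beta\|\sqrt{\log n/n})$ using the sup-norm machinery from the proof of Theorem~\ref{thm_timevaryingcoeff}, whereas you obtain $S_2=O_{\mathbb{P}}(\sqrt{S_3})$ directly from the bounded covariance of $\sqrt{n}(\widehat{\bm\beta}-\bm\beta)$, which is a slightly cleaner second-moment argument but yields the same conclusion that $S_2/f_2=o_{\mathbb{P}}(S_3/f_2)$.
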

Proposition \ref{pro_power} states that the white noise test has asymptotic power 1 whenever $\sum_{j=1}^{\infty} \int_0^1 \gamma^2(t, j)dt\gg \sqrt{bc}/n$. In an interesting special case when $\int_0^1 \gamma^2(t, j_i)dt\gg \sqrt{bc}/(nk)$, $i=1,2,\cdots,k$, $T_1^*$ achieves asymptotic power 1. Note that if $k$ here is large, then we conclude that alternatives consists of many very small deviations from the null can be picked up by the ${\cal L}^2$ test $T_1^*$. On the contrary, maximum deviation or ${\cal L}^\infty$ norm based tests will not be sensitive to such alternatives.  

%
%

\subsection{Practical implementation}\label{sec4_subtest}
It can been seen from Theorem \ref{prop_quadist} that the key to implement the tests is to estimate the covariance matrix of the high dimensional vector $\{\mathbf{x}_i \epsilon_i\}$. A disadvantage of using (\ref{eq_nt1star}) is that the basis functions are mixed with the time series. In the present subsection, we provide a practical implementation by representing $nT_1^*$ and $nT_2^*$ into different forms in order to separate the data and the basis functions. We focus our discussion on $nT_1^*.$ 

For $i>b, j \leq b,$ denote the vector $\mathbb{B}_j(\frac{i}{n}) \in \mathbb{R}^{bc}$ with $b$-blocks, where the $j$-th block is the basis $\mathbf{b}(\frac{i}{n})$ and zeros otherwise. Therefore, for all $j \leq b, b<i \leq n,$ we have
\begin{equation}\label{eq_bj}
\Big(\phi_j(\frac{i}{n})-\widehat{\phi}_j(\frac{i}{n}) \Big)^2 =\mathbb{B}_j^*(\frac{i}{n}) \Sigma^{-1} \frac{Y^* \bm{\epsilon}}{n} \frac{\bm{\epsilon}^* Y}{n} \Sigma^{-1} \mathbb{B}_j(\frac{i}{n})+o_{\mathbb{P}}(1).
\end{equation}
Denote $\mathbf{q}_{ij}^*=\mathbb{B}_j^*(\frac{i}{n}) \Sigma^{-1} \in \mathbb{R}^{bc}$ and $\mathbf{q}_{ijk}$ as the $k$-th block of $\mathbf{q}_{ij}$ of size $c.$ As a consequence,  we can write 
\begin{equation}\label{eq_summationminimax}
\mathbf{q}^*_{ij} \frac{Y^* \bm{\epsilon}}{n}=\frac{1}{n} \sum_{k=b+1}^n \mathbf{h}_k^* \widetilde{\mathbf{q}^{k}_{ij}},
\end{equation}
where we recall $\mathbf{h}_k=\epsilon_k \mathbf{x}_k, \widetilde{\mathbf{q}_{ij}^k} \in \mathbb{R}^b$ is denoted by $(\widetilde{\mathbf{q}_{ij}^k})_s=\mathbf{q}^*_{ijs} \mathbf{b}(\frac{k}{n}).$ Denote $\mathbf{Q}_{ij} \in \mathbb{R}^{(n-b)b \times (n-b)b}$ as a block matrix with size $b \times b$ whose $(k_1,k_2)$-th block is $\mathbf{q}^{\widetilde{k}_1}_{ij}(\mathbf{q}^{\widetilde{k}_2}_{ij})^*.$ Furthermore, we denote 
\begin{equation*}
Q(\frac{i}{n})=\sum_{j=1}^b \mathbf{Q}_{ij}, \  Q_{k_0}(\frac{i}{n})=\sum_{j=k_0}^b \mathbf{Q}_{ij}.
\end{equation*}
By (\ref{eq_summationminimax}) and Theorem \ref{prop_quadist}, it suffices to study the following quantity
\begin{equation*}
n^2T_1^{**} = (\Sigma_L^{1/2} \mathbf{z}_L)^* \Big( \int_0^1 Q(t) dt \Big) (\Sigma_L^{1/2} \mathbf{z}_L),
\end{equation*}
where $\Sigma_L$ is the covariance matrix of  $\mathbf{h}=(\mathbf{h}_{b+1}, \cdots, \mathbf{h}_n)^*$ and $\mathbf{z}_L \sim \mathcal{N}(\mathbf{0}, \mathbf{I})$,  $\mathbf{I} \in \mathbb{R}^{(n-b)b}.$ Similarly, we use the following statistic to study $\mathbf{H}_0^2$
\begin{equation*}
n^2T_2^{**}=(\Sigma_L^{1/2} \mathbf{z}_L)^* \Big( \int_0^1 Q_{k_0}(t) dt \Big) (\Sigma_L^{1/2} \mathbf{z}_L).
\end{equation*}

The above expressions are useful for our practical implementation as they provide us a way to separate the deterministic basis functions and the random part. Hence we only need to estimate the covariance  matrix $\Sigma_L$ for $\mathbf{h}.$ Next we will provide a nonparametric estimator for $\Sigma_L.$ Similar ideas have been employed to estimate the long-run covariance matrix in \cite{WZ2} for fixed dimensional random vectors.

We observe that the covariance matrix of $\mathbf{h}$ is a $(n-b) \times (n-b)$ block matrix with block size $b$.  We first consider the diagonal part, where each block $\Lambda_{k}$ is the covariance matrix of $\mathbf{h}_k, k=b+1, \cdots, n.$ Recall that  we can write $\{\mathbf{h}_k\}$ into a sequence of locally stationary time series $\{\mathbf{U}(\frac{k}{n}, \mathcal{F}_k)\}_{k=b+1}^n.$ Denote 
\begin{equation*}
\Lambda(t,j)=\operatorname{Cov}(\mathbf{U}(t, \mathcal{F}_0), \mathbf{U}(t, \mathcal{F}_j)).
\end{equation*}
The following lemma shows that $\Lambda_{kk},$ which is the $k$-th diagonal block of $\Sigma_L,$  can be well estimated by $\Lambda(\frac{k}{n},0)$ for any $k>b.$ We will put its proof in Appendix \ref{appendix_a}.
\begin{lem}\label{lem_longrundiag}
Under Assumption \ref{assu_phylip} and \ref{assu_smmothness}, we have 
\begin{equation*}
\sup_{k>b} \left | \left| \Lambda(\frac{k}{n},0)-\Lambda_{kk} \right| \right|=O(n^{-1+4/\tau}).
\end{equation*}
\end{lem}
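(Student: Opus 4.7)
The plan is to compare $\Lambda(k/n,0)$ and $\Lambda_{kk}$ entry by entry and then pass from a uniform entry-wise bound to an operator-norm bound via the Frobenius norm, exploiting the fact that both matrices are only $b\times b$ with $b=O(n^{2/\tau})$. Write $\mathbf{h}_k=\epsilon_k\mathbf{x}_k$ so that, using $\mathbb{E}[\epsilon_k x_{k-i}]=0$ from the definition of the best linear prediction, the $(i,j)$-entry of $\Lambda_{kk}$ equals $\mathbb{E}[\epsilon_k^2 x_{k-i}x_{k-j}]$ and the $(i,j)$-entry of $\Lambda(k/n,0)$ equals the analogous expectation computed with the stationary-at-time-$k/n$ versions of all factors.

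First I would replace $\epsilon_k$ by $\widetilde{\epsilon}_k=x_k-\sum_{j=1}^{\min(b,k-1)}\phi_{kj}x_{k-j}$ using the in-probability bound $(\ref{eq_reduceepsilon})$ together with the uniform $\mathcal{L}_q$ moment control from $(\ref{assum_moment})$; this costs only $o(n^{-3})$ entrywise after applying Hölder's inequality and is therefore negligible. The residual $\widetilde{\epsilon}_k$ is a linear combination of $x_k$ and $\{x_{k-j}\}_{j=1}^b$, so every factor appearing in the covariance $\mathbb{E}[\widetilde{\epsilon}_k^2 x_{k-i}x_{k-j}]$ is of the form $G((k-\ell)/n,\mathcal{F}_{k-\ell})$ with $0\le\ell\le b$. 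Replacing each such factor by its locally stationary counterpart $G(k/n,\mathcal{F}_{k-\ell})$ costs, by the stochastic Lipschitz property $(\ref{assum_lip})$, at most $C\ell/n\le Cb/n$ in $\mathcal{L}_q$ norm. A telescoping argument together with Hölder's inequality (using $q>4$ so that all fourfold products are integrable, guaranteed by $(\ref{assum_moment})$) then yields that each entry of $\Lambda_{kk}-\Lambda(k/n,0)$ is of size $O(b/n)=O(n^{-1+2/\tau})$, uniformly in $k>b$.

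To pass from an entry-wise bound to the operator norm, I would use the elementary inequality $\|M\|\le\|M\|_F\le b\cdot\max_{i,j}|M_{ij}|$ for $b\times b$ matrices. Since $b=O(n^{2/\tau})$ and the entry-wise bound is $O(n^{-1+2/\tau})$, this gives
\begin{equation*}
\sup_{k>b}\bigl\|\Lambda(k/n,0)-\Lambda_{kk}\bigr\|\le b\cdot O\!\left(n^{-1+2/\tau}\right)=O\!\left(n^{-1+4/\tau}\right),
\end{equation*}
which is exactly the claimed rate.

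The main technical obstacle is the handling of $\epsilon_k$, which is defined through a possibly very long best linear prediction; without the reduction to $\widetilde{\epsilon}_k$ one would not have a clean local-stationarity representation to which the Lipschitz bound applies. Once that reduction is in place, everything else is a careful but routine telescoping of products of locally stationary factors, controlled by $(\ref{assum_lip})$, $(\ref{assum_moment})$, and Hölder's inequality. One should also verify that the implicit constants in the Lipschitz step do not depend on $k$ or on the block index, which follows directly from the uniformity in $t$ built into Assumption \ref{assu_phylip}.
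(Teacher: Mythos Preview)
Your approach is essentially the paper's: obtain a uniform entry-wise bound of order $n^{-1+2/\tau}$ via the stochastic Lipschitz property, then upgrade to an operator-norm bound at the cost of a factor $b=O(n^{2/\tau})$; the paper cites Gershgorin (Lemma~\ref{lem_disc}) rather than the Frobenius inequality, but both yield the same $b\cdot\max_{i,j}|M_{ij}|$ estimate here. One small correction: the in-probability statement (\ref{eq_reduceepsilon}) does not by itself control the \emph{expectation} you need via H\"older; instead bound $\|\epsilon_k-\widetilde{\epsilon}_k\|_q$ directly from Proposition~\ref{lem_borderapproximation} (writing $\epsilon_k-\widetilde{\epsilon}_k=-\sum_{j>b}\phi_{kj}x_{k-j}$ gives $\|\epsilon_k-\widetilde{\epsilon}_k\|_q=O(b^{1-\tau})$), after which your H\"older/telescoping argument goes through as written.
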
 

Next we consider the upper-off-diagonal blocks.  For any $b< k \leq n-b+1, $ we find that for $j>b+k,$ for some constant $C>0,$ we have 
\begin{equation} \label{eq_boundofflamada}
\left| \left| \Lambda_{kj} \right|\right| \leq C (j-b)^{-\tau+1},  
\end{equation}
where we use a similar discussion to Lemma \ref{lem_phy} and \ref{lem_disc}. As a consequence, we only need to estimate the blocks $\Lambda_{kj}$ for $k<j \leq k+b.$  Similar to Lemma \ref{lem_longrundiag}, we have 
\begin{equation*}
\left| \left| \Lambda(\frac{k}{n},j)-\Lambda_{kj} \right| \right|=O(n^{-1+4/\tau}).
\end{equation*} 
Hence, we need to estimate $ \Lambda(t,j), \ 0\leq j \leq b$ using the kernel estimators. For a smooth symmetric density function $K_{h}$ defined on $\mathbb{R}$ supported on $[-1,1],$ where $h \equiv h_n$ is the bandwidth such that $h \rightarrow 0, \ nh \rightarrow \infty.$  We write
\begin{equation*}
\widehat{\Lambda}(t,j)=\frac{1}{nh} \sum_{k=b+1}^{n-j} K\left(\frac{k/n-t}{h}\right)\mathbf{h}_{k} \mathbf{h}^*_{k+j}, \ 0 \leq j \leq b. 
\end{equation*}
Finally we define $\widehat{\Sigma}_L$ as the estimator by setting its blocks  
\begin{equation} \label{eq_longcov}
(\widehat{\Sigma}_L)_{kk}=\widehat{\Lambda}(\frac{b+k}{n},0), \ (\widehat{\Sigma}_L)_{kj}=\widehat{\Lambda}(\frac{k+b}{n},j),
\end{equation}
and zeros otherwise, where $k=1,2,\cdots, n-b, \ k<j \leq k+b.$ We can prove that our estimators are consistent under mild assumptions.

\begin{thm}\label{thm_bootstrap} Under Assumption \ref{assu_phylip} and  \ref{assu_smmothness}, let $ h \rightarrow 0$ and $nh \rightarrow \infty,$ for $ j=0,1,2,\cdots,b,$ we have 
\begin{equation}\label{eq_boundlamda}
\sup_{t} \left| \left| \Lambda(t,j)-\widehat{\Lambda}(t,j) \right| \right|=O_{\mathbb{P}} \Big(b  \Big( \frac{1}{\sqrt{nh}}+h^2 \Big) \Big).
\end{equation}
As a consequence, we have 
\begin{equation}\label{eq_sigmaest}
\left| \left| \Sigma_L-\widehat{\Sigma}_L \right| \right| =O_{\mathbb{P}} \Big(b^2 \Big( \frac{1}{\sqrt{nh}}+h^2 \Big) \Big).
\end{equation}
\end{thm}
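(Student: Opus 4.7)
The strategy is to decompose the error $\widehat{\Lambda}(t,j) - \Lambda(t,j)$ into a deterministic bias and a stochastic fluctuation, control each entrywise and uniformly in $t$, and then convert to the operator norm at the cost of a factor $b$ (using $\|A\|_{\mathrm{op}} \le b \|A\|_{\max}$ for a $b\times b$ matrix). For (\ref{eq_sigmaest}) I would then exploit the block-banded structure of $\widehat{\Sigma}_L$ to pay an additional factor of order $b$.

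First I would write
$$\widehat{\Lambda}(t,j) - \Lambda(t,j) = \bigl(\widehat{\Lambda}(t,j) - \mathbb{E}\widehat{\Lambda}(t,j)\bigr) + \bigl(\mathbb{E}\widehat{\Lambda}(t,j) - \Lambda(t,j)\bigr).$$
For the bias, $\mathbb{E}[\mathbf{h}_k\mathbf{h}_{k+j}^*]$ is close to $\Lambda(k/n,j)$ entrywise with error $O(n^{-1+4/\tau})$, derived exactly as in Lemma \ref{lem_longrundiag} from the stochastic Lipschitz property in Assumption \ref{assu_phylip}. After this replacement, the bias becomes a standard nonparametric kernel bias, and by Assumption \ref{assu_smmothness} $\Lambda(\cdot,j)$ is $C^d$, so a Taylor expansion combined with the symmetry of $K$ yields $O(h^2)$ per entry (and boundary effects contribute only in the standard way, absorbable into the same rate).

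For the variance piece, each scalar entry of $\widehat{\Lambda}(t,j) - \mathbb{E}\widehat{\Lambda}(t,j)$ is a kernel-weighted sum of products of two components of $\mathbf{h}_k$ and $\mathbf{h}_{k+j}$. By Assumption \ref{assu_phylip}, Lemma \ref{lem_epsilon}, and two applications of Cauchy--Schwarz, such products inherit polynomially decaying physical dependence, giving per-entry variance $O(1/(nh))$. Uniformity over $t \in [0,1]$ is obtained by a standard discretization onto a grid of polynomial cardinality combined with the Lipschitz continuity of $K$, and a maximal inequality produced via truncation and $m$-dependent approximation in the spirit of the proof of Theorem \ref{thm_gaussian}. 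This yields entrywise $O_{\mathbb{P}}(1/\sqrt{nh})$ uniformly in $t$ (absorbing a logarithmic factor into the implicit constant, as elsewhere in the paper). Upgrading entrywise to operator norm via $\|A\|_{\mathrm{op}}\le b\|A\|_{\max}$ gives (\ref{eq_boundlamda}).

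For (\ref{eq_sigmaest}), I would note that $\widehat{\Sigma}_L$ is block-banded with block-bandwidth $b+1$, while by (\ref{eq_boundofflamada}) the blocks of $\Sigma_L$ outside this band satisfy $\|\Lambda_{k,k+j}\|\le C(j-b)^{-\tau+1}$, whose sum over $j>b$ is $O(b^{-\tau+2})$, negligible since $\tau>10$. Inside the band there are at most $2b+1$ block diagonals; since the operator norm of any matrix supported on a single block super- or sub-diagonal equals the maximum block operator norm on that diagonal (Gershgorin/Schur-test), the triangle inequality yields
$$\|\Sigma_L - \widehat{\Sigma}_L\| \le (2b+1)\sup_{0\le j\le b}\sup_{t}\|\widehat{\Lambda}(t,j)-\Lambda(t,j)\| + O(b^{-\tau+2}),$$
and (\ref{eq_boundlamda}) delivers (\ref{eq_sigmaest}). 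The main obstacle of the whole argument is the uniform-in-$t$ stochastic bound for the matrix-valued kernel average under physical dependence: combining $m$-dependent approximation with a chaining/grid argument while keeping the rate free of parasitic polynomial factors in $b$ is the delicate technical step, since any sub-optimal dependence on $b$ here would be magnified by the two subsequent $b$ factors incurred when passing to operator norm and exploiting the block-banded structure.
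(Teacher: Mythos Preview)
Your proposal is correct and follows essentially the same approach as the paper: entrywise bias--variance decomposition of $\widehat{\Lambda}(t,j)-\Lambda(t,j)$, then passage to operator norm at the cost of a factor $b$ (the paper invokes Gershgorin, Lemma~\ref{lem_disc}, which is equivalent to your $\|A\|_{\mathrm{op}}\le b\|A\|_{\max}$), and finally for (\ref{eq_sigmaest}) the block-banded structure of $\widehat{\Sigma}_L$ yields the additional factor $b$ (the paper uses the block Gershgorin theorem, Lemma~\ref{lem_blockge}, in place of your triangle inequality over block diagonals). If anything you are more careful than the paper about the uniformity in $t$: the paper's proof treats a single grid point $t_i$ via a moment bound from Lemma~\ref{lem_con} and asserts the other cases are similar, whereas you outline the discretization/chaining step explicitly.
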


In practice, the true $\epsilon_i$ is unknown and we have to use $\widehat{\epsilon}_i$ defined in (\ref{eq_hatepsilon}). We then define 
\begin{equation*}
\widetilde{\Lambda}(t,j)=\frac{1}{nh} \sum_{k=b+1}^{n-j} K\left(\frac{k/n-t}{h}\right)\mathbf{\widehat{h}}_{k} \mathbf{\widehat{h}}^*_{k+j}, \ 0 \leq j \leq b. 
\end{equation*}
where $\mathbf{\widehat{h}}_k:=\mathbf{x}_k\widehat{\epsilon}_{k}.$  Similarly, we can define the estimation $\widetilde{\Sigma}_L.$ The analog of Theorem \ref{thm_bootstrap} is the following result. 
\begin{thm} \label{thm_bootstrap1}
Under the assumptions of Theorem \ref{thm_bootstrap} and Assumption \ref{assu_basisregularity}, \ref{assu_derivativebasis} and \ref{assu_parameter}, we have 
\begin{equation*}
\sup_{t} \left| \left| \Lambda(t,j)-\widetilde{\Lambda}(t,j) \right| \right|=O_{\mathbb{P}} \Big(b  \Big( \frac{1}{\sqrt{nh}}+h^2+\theta_n \Big) \Big),
\end{equation*}
where $\theta_n$ is defined as
\begin{equation*}
\theta_n= \sqrt{\frac{b}{nh}}\left( \zeta_c\sqrt{\frac{\log n}{n}}+n^{-d \alpha_1} \right).
\end{equation*}
As a consequence, we have 
\begin{equation*} 
\left| \left| \Sigma_L-\widetilde{\Sigma}_L \right| \right|=O_{\mathbb{P}} \Big( b^2 \Big( \frac{1}{\sqrt{nh}}+h^2+\theta_n \Big) \Big).
\end{equation*}
\end{thm}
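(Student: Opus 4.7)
The plan is to reduce the theorem to Theorem \ref{thm_bootstrap} via the triangle inequality
\begin{equation*}
\sup_t \|\Lambda(t,j)-\widetilde{\Lambda}(t,j)\| \leq \sup_t \|\Lambda(t,j)-\widehat{\Lambda}(t,j)\| + \sup_t \|\widehat{\Lambda}(t,j)-\widetilde{\Lambda}(t,j)\|,
\end{equation*}
so that the first term is already $O_{\mathbb{P}}(b(1/\sqrt{nh}+h^2))$ by Theorem \ref{thm_bootstrap}, and only the residual-plug-in error in the second term needs to contribute the $O_{\mathbb{P}}(b\theta_n)$ piece. The operator-norm bound (\ref{eq_sigmaest}) for $\Sigma_L-\widetilde{\Sigma}_L$ would then follow exactly as in Theorem \ref{thm_bootstrap}, since the block-sparsity argument based on (\ref{eq_boundofflamada}) (only $O(b)$ nonzero off-diagonal blocks per row) and a Schur/Gershgorin-type inequality contributes the extra factor $b$ regardless of how the per-block error is obtained.

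For the second piece, I would write
\begin{equation*}
\widehat{\Lambda}(t,j)-\widetilde{\Lambda}(t,j) = \frac{1}{nh}\sum_{k=b+1}^{n-j} K\Big(\tfrac{k/n-t}{h}\Big)\bigl[\epsilon_k\epsilon_{k+j}-\widehat{\epsilon}_k\widehat{\epsilon}_{k+j}\bigr]\mathbf{x}_k\mathbf{x}_{k+j}^*
\end{equation*}
and split the bracket via $\epsilon_k\epsilon_{k+j}-\widehat{\epsilon}_k\widehat{\epsilon}_{k+j}=(\epsilon_k-\widehat{\epsilon}_k)\epsilon_{k+j}+\widehat{\epsilon}_k(\epsilon_{k+j}-\widehat{\epsilon}_{k+j})$. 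Using (\ref{defn_cho_borderapp}) together with the sieve expansion of Section \ref{sec_3sub1}, I would next substitute the representation
\begin{equation*}
\epsilon_k-\widehat{\epsilon}_k = \sum_{j'=1}^{b}\bigl(\phi_{j'}(k/n)-\widehat{\phi}_{j'}(k/n)\bigr)x_{k-j'}+o_{\mathbb{P}}(n^{-1}),
\end{equation*}
so that the sup-norm bound on the coefficient errors given by Theorem \ref{thm_timevaryingcoeff} can be applied. Pulling out the uniform factor $r_n:=\zeta_c\sqrt{\log n/n}+n^{-d\alpha_1}$ reduces the task to bounding the operator norm of a kernel average of matrix-valued summands of the form $x_{k-j'}\epsilon_{k+j}\mathbf{x}_k\mathbf{x}_{k+j}^{*}$.

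The heart of the argument then mirrors the moment/concentration calculation used to prove Theorem \ref{thm_bootstrap}: I would control the Frobenius norm via
\begin{equation*}
\Big\|\frac{1}{nh}\sum_k K(\cdot) V_k \Big\|_F^2 \leq \frac{1}{(nh)^2}\Big(\sum_k K^2(\cdot)\Big)\sum_{k_1,k_2}\bigl|\operatorname{Cov}\bigl(\langle V_{k_1},E\rangle,\langle V_{k_2},E\rangle\bigr)\bigr|,
\end{equation*}
where the double sum over $k_1,k_2$ is handled by the physical-dependence bound (\ref{assum_phy}) for $\{\epsilon_i\}$ (Lemma \ref{lem_epsilon}) and $\{x_i\}$, yielding the $\sqrt{1/(nh)}$ concentration factor and, after accounting for the $b$-dimensional vector $\mathbf{x}_k$ (whose squared norm has expectation of order $b$), the target rate $\sqrt{b/(nh)}\,r_n=\theta_n$ per block.

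The main obstacle I anticipate is the last step: obtaining the $\sqrt{b/(nh)}$ concentration factor rather than the cruder deterministic $O(1)$ bound one would get by summing entrywise absolute values. This requires carefully combining the kernel-averaging effect with the physical dependence decay of both $\{x_i\}$ and $\{\epsilon_i\}$ so that the covariance double sum telescopes, and it is exactly the place where Assumption \ref{assu_phylip} must be used quantitatively rather than qualitatively. Once this block-level rate is established, the operator-norm bound on $\Sigma_L-\widetilde{\Sigma}_L$ follows from the same block-count argument used for $\widehat{\Sigma}_L$ in Theorem \ref{thm_bootstrap}.
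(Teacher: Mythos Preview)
Your proposal is correct and follows essentially the same route as the paper: reduce to the difference $\widehat{\Lambda}-\widetilde{\Lambda}$ via the triangle inequality with Theorem~\ref{thm_bootstrap}, expand $\epsilon_k-\widehat{\epsilon}_k$ in terms of the sieve coefficient errors and apply Theorem~\ref{thm_timevaryingcoeff} (together with Lemma~\ref{lem_approxphi}), then use a kernel-average concentration argument to produce the $\sqrt{b/(nh)}$ factor, and finally pass to $\Sigma_L-\widetilde{\Sigma}_L$ by the same block-Gershgorin count as in Theorem~\ref{thm_bootstrap}. The paper's own proof is much terser---it writes only the representative $(1,1)$ entry and states the resulting rate---so your outline is in fact a more detailed version of the same argument.
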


By Theorem \ref{prop_quadist}, \ref{thm_bootstrap} and \ref{thm_bootstrap1}, we now propose the following practical procedure to test $\mathbf{H}_0^1$ (the implementation for $\mathbf{H}_0^2$ is similar): \\

1.  For $j=1,2,\cdots,b,  i=b+1,\cdots, n,$ estimate $\Sigma^{-1}$ using $n(Y^*Y)^{-1}$ and calculate $\mathbf{Q}_{ij}$ by the definitions. \\

2. Choose the tuning parameters $b$ and $c$ according to Section \ref{sec_parachoice}. \\

3. Estimate $\Sigma_L$ using (\ref{eq_longcov}) from the samples $\{\mathbf{\widehat{h}}_k\}_{k=b+1}^n.$ \\

4.  Generate B (say 2000) i.i.d copies of Gaussian random vectors $\mathbf{z}_i, \ i=1,2,\cdots, B.$ Here $\mathbf{z}_i \sim \mathcal{N}(\mathbf{0}, \mathbf{I}).$ For each $k=1,2,\cdots, B,$ calculate the following Riemann summation 
\begin{equation*}
T^1_k=\frac{1}{n^2} \sum_{j=1}^b \sum_{i=b+1}^n (\widehat{\Sigma}_L \mathbf{z}_k)^* \mathbf{Q}_{ij} (\widehat{\Sigma}_L \mathbf{z}_k).
\end{equation*}

5. Let $T_{(1)}^1 \leq T_{(2)}^1 \leq \cdots \leq T_{(B)}^1 $ be the order statistics of $T^1_k, k=1,2,\cdots, B.$ Reject $\mathbf{H}_0^1$ at the level $\alpha$ if $T_1^{*}>T^1_{(\lfloor B(1-\alpha) \rfloor)},$ where $\lfloor x \rfloor$ stands for the largest integer smaller or equal to $x.$ Let $B^*=\max\{k: T^1_{(k)} \leq T_1^{*}\},$ the $p$-value can be denoted as $1-B^*/B.$ \\

\subsection{Choices of tuning parameters} \label{sec_parachoice}

In this subsection, we briefly discuss the practical choices of the key parameters, i.e. the lag $b$ of the auto-regression in Cholesky decomposition, the number of basis functions in sieve estimation, choice of $k_0$ in the bandedness test and the bandwith selection in the nonparametric estimation of covariance matrix. 

Similar to the discussion in Section \ref{sec_subtest}, by Proposition  \ref{lem_borderapproximation}, Lemma \ref{lem_approxphi} and Theorem \ref{thm_timevaryingcoeff}, for any given sufficiently large $b_0 \equiv b_0(n), $ the following statistic should be small enough 
\begin{equation*}
\mathcal{T}_b=\sum_{j=b_1}^{b_0} \int_0^1 \widehat{\phi}^2_j(t)dt, \ b<b_1<b_0.
\end{equation*}
By Theorem \ref{prop_quadist}, $\mathcal{T}_b$ is normally distributed. Hence, we can follow the procedure    described in the end of Section \ref{sec4_subtest}. For each fixed $b_1<b_0$, we can formulate the null hypothesis as $\mathbf{H}_0^b: b_1>b. $
Given the level $\alpha,$ denote 
\begin{equation*}
b^*=\max_{b_1}\{b_1<b_0: \mathbf{H}_0^b \ \text{is rejected}  \}.
\end{equation*}
Then we can choose $b=b^*.$  Note that $b^*+1$ is the first off diagonal where all its entries are effectively zeros in terms of statistical significance.

The number of basis functions can be chosen using model selection methods for nonparametric sieve estimation. However, due to non-stationarity, the classic Akaike information criterion (AIC) may fail under heteroskedasticity.  In the present paper, we use  the cross-validation method described in \cite[Section 8]{BH} where the cross-validation criterion is defined as 
\begin{equation*}
\text{CV}(c)=\frac{1}{n}\sum_{i=2}^n \frac{\widehat{\epsilon}^2_{ic}}{(1-\upsilon_{ic})^2},
\end{equation*}
where  $\{\widehat{\epsilon}_{ic}\}$ are the estimation residuals using sieve method with order of $c$ and $\upsilon_{ic}$ is the leverage defined as $\upsilon_{ic}=\mathbf{y}_i^*(Y^*Y)\mathbf{y}_{i},$
where we recall (\ref{y_kronecker}). Hence, we can choose 
\begin{equation*}
\widehat{c}=\argmin_{1 \leq c \leq c_0} \text{CV}(c),
\end{equation*} 
where $c_0$ is a pre-chosen large value.

Finally the bandwidth can be chosen using the standard leave-one-out cross-validation criterion for nonparametric estimation. Denote 
 \begin{equation*}
 \widehat{J}(h):=\sup_j \left|\left|\int_0^1 \widetilde{\Lambda}(t,j) \circ   \widetilde{\Lambda}(t,j)dt- \frac{2}{n} \sum_{k=b+1}^n \widetilde{\Lambda}_{-k}(t_k,j)          \right|\right|,
 \end{equation*}
where $t_i=\frac{i}{n}$, $\circ$ is the Hadamard (entrywise) product for matrices and $\widetilde{\Lambda}_{-k}$ is the estimation excluding the sample $\widehat{\mathbf{h}}_k\widehat{\mathbf{h}}_{k+j}^*$. Therefore, the selected bandwidth is 
\begin{equation*}
\widehat{h}=\argmin_{h} \widehat{J}(h).
\end{equation*}

\noindent{\bf Acknowledgments.} The authors are grateful for the suggestions of the referee, the associated editor and the editor, which have improved the paper significantly.

\clearpage

\begin{appendix}
\section{Simulation Studies}\label{sec_examsimu}
In this section, we design Monte Carlo experiments to study the finite sample accuracy and sensitivity of our estimation and testing procedure. First of all, we list the choices of tuning parameters $b$ and $c$ 

We mention that the state-of-the-art technique for choosing the number and values of $\{a_{jk}\}$ under certain sparsity assumption is the LASSO method \cite{lassopaper}. We record the choices of tuning parameters using our method, two-step CV from Section \ref{sec_parachoice} and the LASSO method in Table  \ref{table_tuning} for a few non-stationary processes considered in (\ref{eq_ma1})--(\ref{eq_ar2}). We find that the LASSO method is on one hand a little bit overestimated and on the other hand ignore the information of $b,$ which stands for the decay of temporal dependence since we indeed have a structure for our model.  In Section \ref{sec5_simu1} and \ref{sec5_simupower}, we will use such estimates for the estimation of precision matrices and hypothesis testing. Overall, we find that the two-step procedure, even though it will employ the CV twice,  has a better performance than simply using LASSO.  Our results perform better due to the fact that the coefficients $a_{jk}$ are overallly decreasing. While LASSO is more suitable for choosing parameters which are not ordered.   

\begin{table}[!ht]
\begin{center}
\begin{tabular}{@{}l*5{>{}l}%
  l<{}l@{}}
\toprule[1.5pt]
&  & &  Two-step CV  & & & LASSO \\ 
  \midrule[1pt]
  & \normal{\head{}} & \normal{\head{n=200}}
  & \normal{\head{n=500}} & \head{n=800} & \head{n=200}  &\head{n=500} &  \head{n=800}  \\
  \midrule[1pt]
  \multirow{3}{*}{MA(1)} & Fourier Basis & (2,2)  & (2,2)  & (1,2) &  8 & 6 & 6 
  \\
    & Polynomial Basis & (2,4) &  (2,6)  & (2,6) & 16 & 18 & 18 \\
  & Wavelet Basis & (1,4) & (2,4)  & (1,4) & 24 & 24 & 32 \\
  \midrule[1pt]
    \multirow{3}{*}{MA($2$)} & Fourier Basis &  (2,2) & (2,4) & (2,2) & 18 & 24 & 24
  \\
   & Polynomial Basis & (3,8) &  (3,7)  & (4,7) & 24 & 24 & 28 \\
  & Wavelet Basis & (2,8)  & (3,8)  & (2,16) & 24 & 24 & 24 \\
  \midrule[1pt]
  \multirow{3}{*}{AR(1)} & Fourier Basis & (4,6) &  (4,8) &  (4,8) & 28 & 36 & 32 
  \\
   & Polynomial Basis & (5,8) &  (4,10)  & (4,8) & 28 & 28 & 32 \\
  & Wavelet Basis & (6,8)  & (4,8)  &  (4,8) & 48 & 48 & 36 \\
  \midrule[1pt]
      \multirow{3}{*}{AR($2$)} & Fourier Basis &  (6,6) & (6,8) & (6,8) & 42 & 42 & 32
  \\
   & Polynomial Basis & (6,10) & (7,12)  & (6,10) & 36 & 42& 42 \\
  & Wavelet Basis & (5,8) & (6,8)  & (5,8) & 48 & 48 & 48 \\
\bottomrule[1.5pt]
\end{tabular}
\end{center}
  \vspace*{-1mm}
\caption{Choices of $b$ and $c$ based on Two-step CV and LASSO. In our two-step CV method,  we record the choices of $(b,c)$ as a pair and in LASSO we record the length of $b \times c.$ } \label{table_tuning}
\end{table}

\subsection{Accuracy of precision matrix estimation}\label{sec5_simu1} In this subsection, we show by simulations the finite sample performance of our estimation. For i.i.d $\mathcal{N}(0,1)$ random variables $\{\epsilon_i\}$, we investigate the non-stationary MA(1), MA(2), AR(1) and AR(2) processes respectively, i.e., 
\begin{equation} \label{eq_ma1}
x_i=0.6 \cos(\frac{2\pi i}{n})\epsilon_{i-1}+\epsilon_i.
\end{equation}
\begin{equation}\label{eq_ma2}
x_i=0.6\cos(\frac{2\pi i}{n})\epsilon_{i-1}+0.3\sin(\frac{2\pi i}{n})\epsilon_{i-2}+\epsilon_i.
\end{equation}
\begin{equation}\label{eq_ar1}
x_i=0.6 \cos(\frac{2\pi i}{n})x_{i-1}+\epsilon_i.
\end{equation}
\begin{equation}\label{eq_ar2}
x_i=0.6\cos(\frac{2\pi i}{n})x_{i-1}+0.3\sin(\frac{2i\pi}{n})x_{i-2}+\epsilon_i.
\end{equation} 

It is easy to compute the true precision matrices
of the above models. In the following simulations, we report the average estimation errors in terms of operator norm and their standard deviations based on 1000 repetitions. We use the methods from Section \ref{sec_parachoice} to choose the parameters and the Epanechnikov kernel \cite[Section 4.2]{WL} for the nonparametric estimation. We also record the results when we use LASSO for estimating the coefficients $\{a_{jk}\}$ in (\ref{defn_forest}). 
We compare the results for three different types of sieves, the Fourier basis functions (i.e. trigonometric polynomials), the  Legendre polynomials and  Daubechies orthogonal wavelet basis functions of order $16$ \cite{DI}. 

We observe from Table \ref{table_preest} that our estimators for the precision matrices are reasonably accurate. Due to the consistency of our estimators, they are more accurate when $n$ becomes larger. Furthermore, as we can see from the estimation of MA(1) and MA(2) processes, our estimators can still be quite accurate even when the underlying precision matrices are not sparse. Due to the sparsity structure, we find that LASSO can provide us an accurate estimates.  However, overall, the two-step CV method has a better performance than LASSO. 

\begin{table}[!ht]
\begin{center}
\begin{tabular}{@{}l*5{>{}l}%
  l<{}l@{}}
\toprule[1.5pt]
&  & &  Two-step CV & & & LASSO \\ 
  \midrule[1pt]
  & \normal{\head{}} & \normal{\head{n=200}}
  & \normal{\head{n=500}} & \head{n=800} & \head{n=200}  &\head{n=500} &  \head{n=800}  \\
  \midrule[1pt]
  \multirow{3}{*}{MA(1)} & Fourier Basis & 1.17 (0.18)  & 1.09 (0.18)  & 0.96 (0.14)&  1.43 (0.13) & 1.48 (0.23) & 1.24 (0.21)
  \\
    & Polynomial Basis & 1.48 (0.12) & 1.46 (0.19)  & 1.37 (0.21) & 1.63 (0.17) & 1.64 (0.19) & 1.34 (0.24) \\
  & Wavelet Basis & 1.5 (0.21) & 1.31 (0.21)  & 1.1 (0.23) & 1.83 (0.27) & 1.84 (0.25) & 1.74 (0.26)\\
  \midrule[1pt]
    \multirow{3}{*}{MA($2$)} & Fourier Basis &  1.35 (0.12) & 1.28 (0.16) & 1.18 (0.18) & 1.68 (0.1) & 1.6 (0.17) & 1.44 (0.14)
  \\
   & Polynomial Basis & 1.47 (0.13) & 1.43 (0.14)  & 1.32 (0.13) & 1.73 (0.19) & 1.86 (0.17) & 1.56 (0.21) \\
  & Wavelet Basis & 1.55 (0.21)  & 1.37 (0.19)  & 1.19 (0.22) & 1.62 (0.13) & 1.67 (0.14) & 1.54 (0.2) \\
  \midrule[1pt]
  \multirow{3}{*}{AR(1)} & Fourier Basis & 0.53 (0.18) &  0.46 (0.17) &  0.4 (0.18) & 0.54 (0.1) & 0.42 (0.11) & 0.4 (0.14)
  \\
   & Polynomial Basis & 0.61 (0.13) & 0.56 (0.12)  & 0.54 (0.14) & 0.6 (0.17) & 0.64 (0.19) & 0.44 (0.24) \\
  & Wavelet Basis & 0.68 (0.21)  & 0.62 (0.23)  &  0.57 (0.24) & 0.69 (0.17) & 0.62 (0.19) & 0.6 (0.2)  \\
  \midrule[1pt]
      \multirow{3}{*}{AR($2$)} & Fourier Basis &  0.78 (0.21) & 0.71 (0.24) & 0.64 (0.24) & 0.79 (0.2) & 0.68 (0.17) & 0.66 (0.18)
  \\
   & Polynomial Basis & 0.82 (0.15) & 0.76 (0.11)  & 0.75 (0.1) & 0.89 (0.18) & 0.86 (0.17) & 0.8 (0.22) \\
  & Wavelet Basis & 0.9 (0.22) & 0.83 (0.24)  & 0.78 (0.24) & 0.89 (0.27) & 0.9 (0.18) & 0.85 (0.25) \\
\bottomrule[1.5pt]
\end{tabular}
\end{center}
  \vspace*{-1mm}
\caption{Operator norm error for estimation of precision matrices. The standard deviations are recorded in the bracket. We use the trigonometric polynomials for Fourier basis, the Legendre polynomials for Polynomial basis and Daubechies wavelet of order $16$ for Wavelet basis. } 
\label{table_preest}
\end{table}


\subsection{Accuracy and power of tests}\label{sec5_simupower}
In this subsection, we design simulations to study the finite sample performance for the white noise and bandedness tests of precision matrices using the procedure described in the end of Section \ref{sec4_subtest}. At the nominal levels $0.01, 0.05$ and $0.1$, the simulated Type I error rates are listed below for the null hypothesis of $\mathbf{H}_0^1$ and $\mathbf{H}_0^2$ based on 1000 simulations, where for $\mathbf{H}_0^{2}$ we use the time varying AR(2) model (i.e. $k_0=2$).  From Table \ref{table_whitenoisealpha} and \ref{table_bandalpha}, we see that the performance of our proposed tests are reasonably accurate for all the above basis functions. We also record the results when we use LASSO for the estimation. We find that overall, our two-step CV method provides more accurate results.

\begin{table}[ht]
\begin{center}
\begin{tabular}{@{}l*5{>{}l}%
  l<{}l@{}}
\toprule[1.5pt]
&  & &  Two-step CV & & & LASSO \\ 
  \midrule[1pt]
  &  &\head{n=200}
  & \head{n=500}& \head{n=800} & \head{n=200}  &\head{n=500} &  \head{n=800}  \\
  \midrule[1pt]
  \multirow{3}{*}{$\alpha=0.01$} & Fourier Basis & 0.008  &  0.01 & 0.009 & 0.006 & 0.006 & 0.007  \\
     & Polynomial Basis & 0.009  &  0.0098 & 0.011 & 0.007 & 0.007 & 0.007  \\
  & Wavelet Basis  & 0.008 & 0.008 & 0.01 & 0.12 & 0.12 & 0.13 \\
  \midrule[1pt]
    \multirow{3}{*}{$\alpha=0.05$} & Fourier Basis &  0.057 & 0.046 & 0.045
& 0.039 & 0.041 & 0.047  \\
    & Polynomial Basis & 0.059  &  0.048 & 0.052 & 0.056 & 0.061 & 0.054 \\
  & Wavelet Basis & 0.053 & 0.048  & 0.047 & 0.038 & 0.039 & 0.058 \\
  \midrule[1pt]
  \multirow{3}{*}{$\alpha=0.1$} & Fourier Basis & 0.11 &  0.097 &  0.1 & 0.09 & 0.092 & 0.098
  \\
    & Polynomial Basis & 0.087  &  0.093 & 0.12 & 0.14 & 0.15 & 0.152 \\
  & Wavelet Basis & 0.091 & 0.087 &  0.088 & 0.089 & 0.094 & 0.088 \\
\bottomrule[1.5pt]
\end{tabular}
\end{center}
  \vspace*{-1mm}
\caption{Simulated type I error rates under $\mathbf{H}_0^1.$}
\label{table_whitenoisealpha}
\end{table}

\begin{table}[!ht]
\begin{center}
\begin{tabular}{@{}l*5{>{}l}%
  l<{}l@{}}
\toprule[1.5pt]
&  & &  Two-step CV &  & & LASSO \\ 
  \midrule[1pt]
  & \normal{\head{}} & \normal{\head{n=200}}
  & \normal{\head{n=500}} & \head{n=800} & \head{n=200}  &\head{n=500} &  \head{n=800}  \\
  \midrule[1pt]
  \multirow{3}{*}{$\alpha=0.01$} & Fourier Basis & 0.009  &  0.013 & 0.009 & 0.008 & 0.008 & 0.006 
  \\
 & Polynomial Basis & 0.008  &  0.011 & 0.009 & 0.013 & 0.016 & 0.014
  \\
  & Wavelet Basis  & 0.011 & 0.014 & 0.008 & 0.018 & 0.018 & 0.016 \\
  \midrule[1pt]
    \multirow{3}{*}{$\alpha=0.05$} & Fourier Basis &  0.052 & 0.05 & 0.049 & 0.056 & 0.058 & 0.058
  \\
   & Polynomial Basis & 0.051  &  0.048 & 0.052 & 0.056 & 0.054 & 0.052
  \\
  & Wavelet Basis & 0.052 & 0.05  & 0.05 & 0.041 & 0.042 & 0.045 \\
  \midrule[1pt]
  \multirow{3}{*}{$\alpha=0.1$} & Fourier Basis & 0.096 &  0.097 &  0.11 & 0.13 & 0.12 & 0.13
  \\
   & Polynomial Basis & 0.089  &  0.098 & 0.092 & 0.12 & 0.12 & 0.11
  \\
  & Wavelet Basis & 0.091 & 0.101 & 0.095 & 0.088 & 0.087 & 0.084 \\
\bottomrule[1.5pt]
\end{tabular}
\end{center}
  \vspace*{-1mm}
\caption{Simulated type I error rates under $\mathbf{H}_0^2$ for $k_0=2.$}
\label{table_bandalpha}
\end{table}

Next we consider the statistical power of our tests under some given alternatives. For the test of white noise, we choose the four examples considered in Section \ref{sec5_simu1} as our alternatives. For the testing of bandedness of the precision matrices, for the null hypothesis, we choose $k_0=2$ and consider the alternatives of AR(3) and MA(3) processes respectively, i.e.,  for $\delta \in (0,0.3),$
\begin{equation*}
x_i=0.6\cos(\frac{2\pi i}{n})x_{i-1}+0.3\sin(\frac{2i\pi}{n})x_{i-2}+\delta \sin(\frac{2i \pi }{n}) x_{i-3}+\epsilon_i, 
\end{equation*}
\begin{equation}\label{eq_ma3}
x_i=0.6\cos(\frac{2\pi i}{n})\epsilon_{i-1}+0.3\sin(\frac{2i\pi}{n})\epsilon_{i-2}+\frac{i}{n}\epsilon_{i-3}+\epsilon_i.
\end{equation}

In all of our simulations, we choose the Daubechies wavelet basis functions of order $16$ as our sieve basis functions and the Epanechnikov kernel for the nonparametric estimation (\ref{eq_longcov}). For the choices of the parameters, we follow the discussion of Section \ref{sec_parachoice}.
Figure \ref{fig1} shows that our testing procedures are quite robust and have strong statistical power for both tests.

\begin{figure}[!ht] \label{fig_beta}
\begin{center}
  \includegraphics[height=6cm, width=6cm]{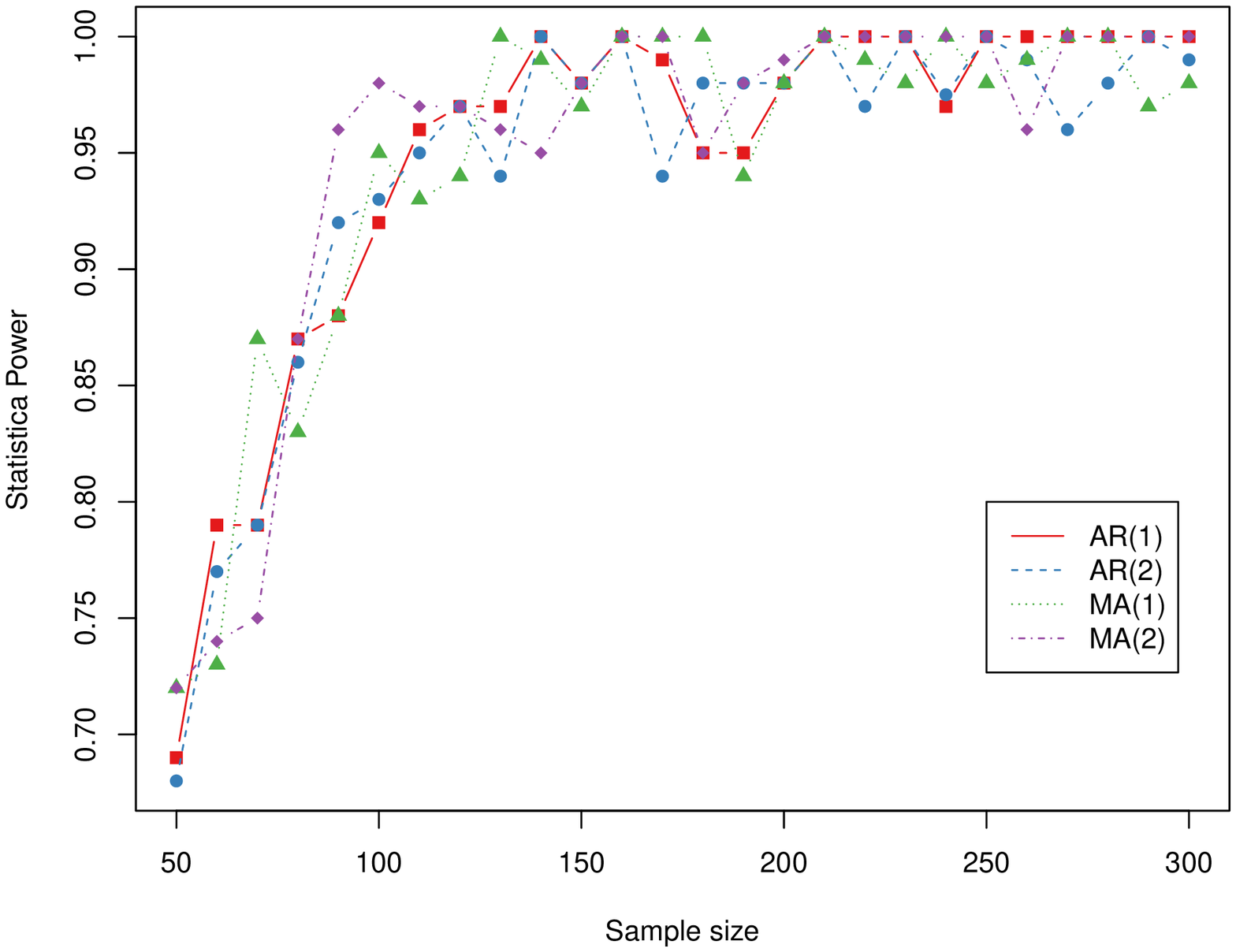} 
    \includegraphics[height=6cm, width=6cm]{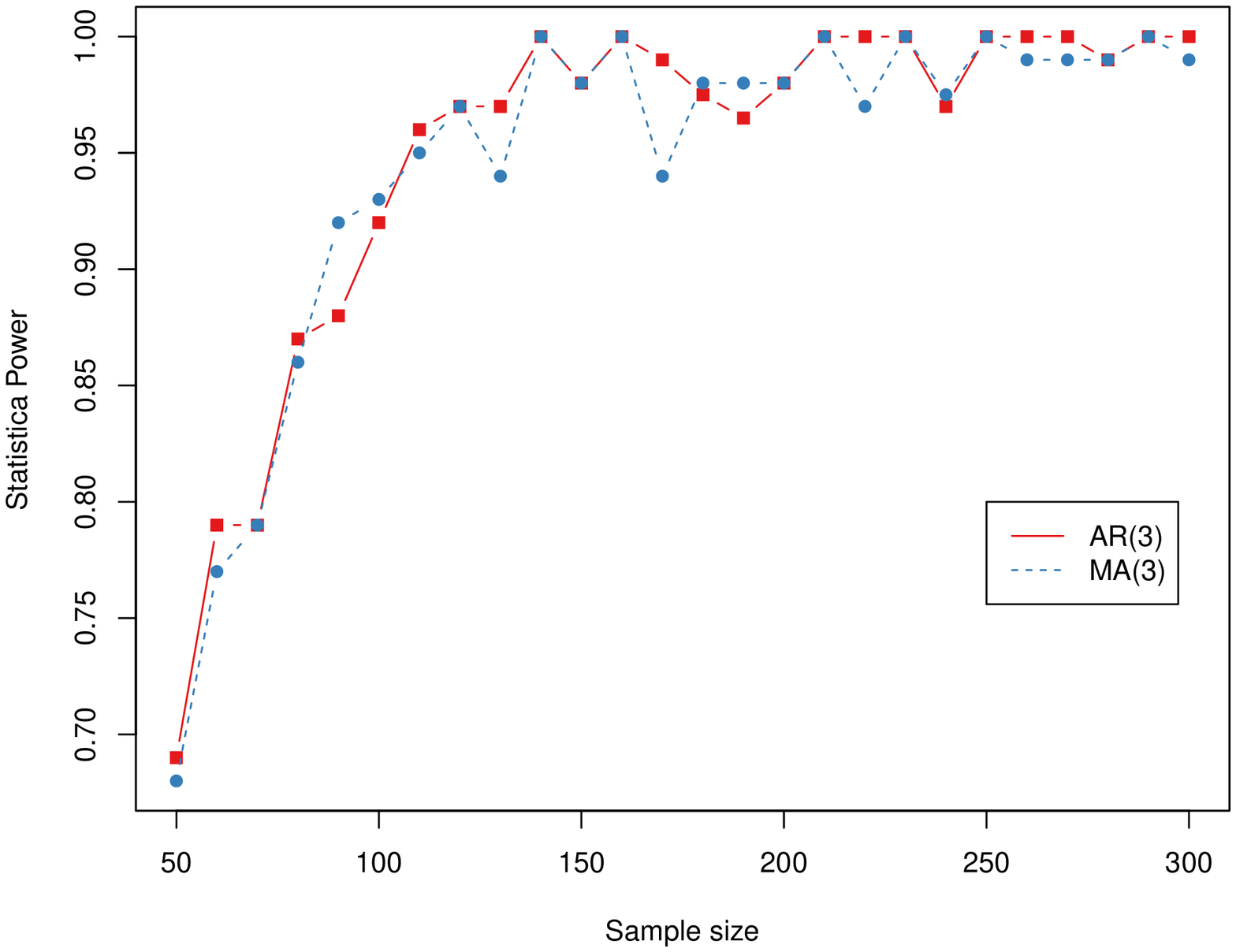} 
  \end{center}
    \vspace*{-1mm}
\caption{Left panel: power of White noise testing  under nominal level 0.05. Right panel: power of bandedness testing under nominal level 0.05. For the AR(3) process we choose $\delta=0.2.$}
\label{fig1}
\end{figure}


Finally, we simulate the statistical power for various choices of $\delta$ in the AR(3) process for the sample size $n=200, 300$ respectively in Figure \ref{fig3}, we find that our method is quite robust.  

\begin{figure}[!ht] \label{fig_beta1}
\begin{center}
  \includegraphics[height=6cm, width=8.5cm]{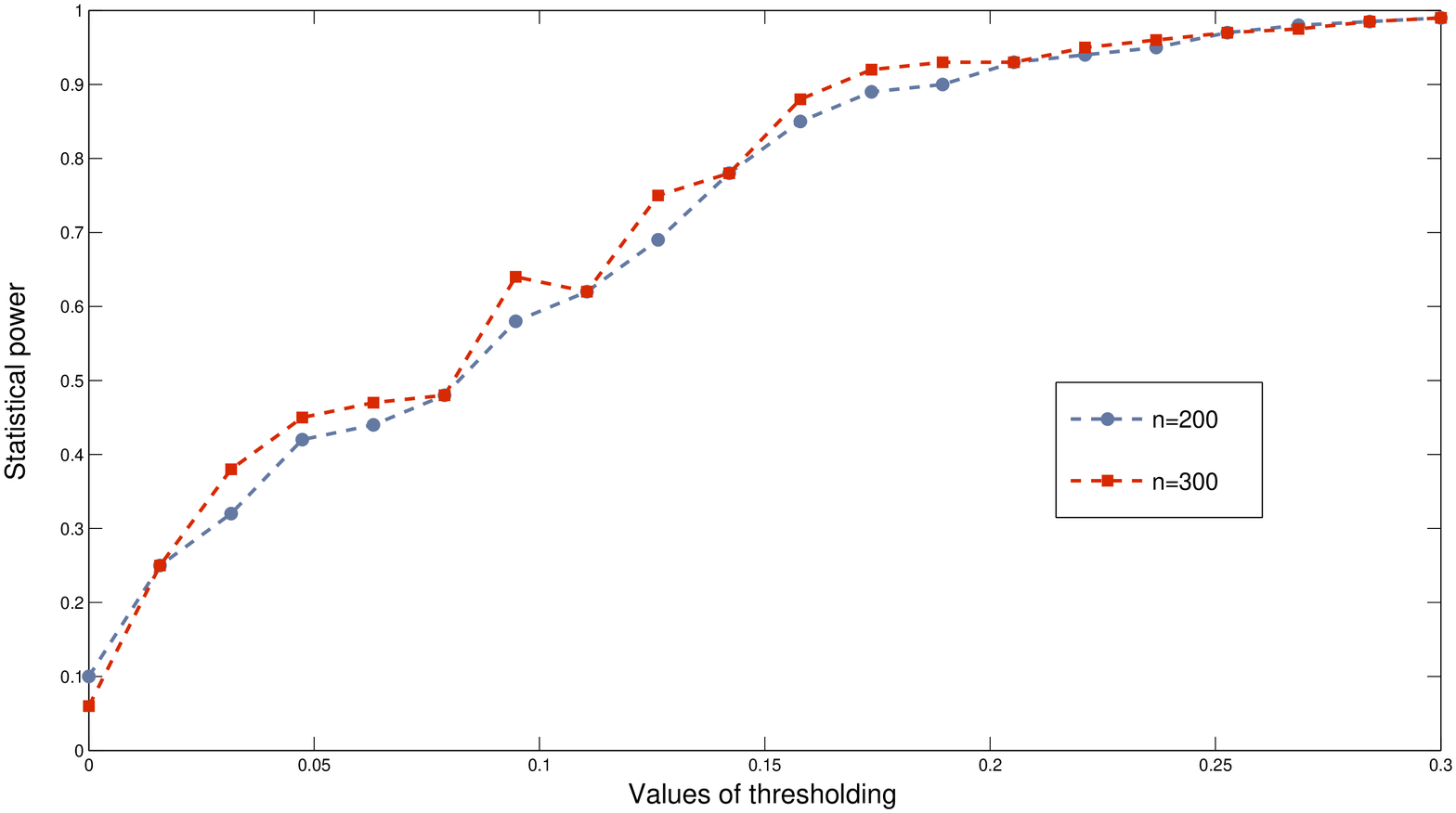} 
  \end{center}
  \vspace*{-1mm}
\caption{Power of bandedness testing under nominal level 0.05 for different values of $\delta.$}
\label{fig3}
\end{figure}

\vspace{4pt}

%

\section{Proofs}\label{sec_proofs} In this section, we prove the main theorems of this  paper.

\begin{proof}[Proof of Theorem \ref{thm_timevaryingcoeff}] We will follow the proof strategy of \cite[Lemma 2.3]{CC}. The key difference is that our projection matrix $S=\frac{1}{n} \sum_{i=b+1}^n \mathbf{y}_i \mathbf{y}_i^*$  will converge to some deterministic matrix other than identity. We therefore need to provide an analog of \cite[Lemma 2.2]{CC}, where they derive the convergence rate for $\beta$-mixing processes and use Berbee's lemma. Here, in our paper, we will use the trick of $m$-dependent sequence to prove our results. The proof contains three main steps: (i). Find the convergent limit $\Sigma$ for $S$; (ii). Find the optimal rate for the norm of $\Sigma-S$; (iii). Follow the proof of \cite[Lemma 2.3]{CC} to conclude our proof. We start with the first step.  
For any $1 \leq i \leq bc, $ denote 
\begin{equation*}
i^{\prime}:=i-ci^{\prime \prime}, \ i^{\prime \prime}:=\lfloor \frac{i}{c}\rfloor+1.
\end{equation*}
With the above definitions, for $1 \leq i,j \leq bc,$ we can write $S_{ij}$ as 
 \begin{equation*}
 S_{ij}=\frac{1}{n} \sum_{k=b+1}^n x_{k-i^{\prime \prime}} x_{k-j^{\prime \prime}} \alpha_{i^{\prime}}(\frac{k}{n}) \alpha_{j^{\prime}}(\frac{k}{n}).
 \end{equation*}
 For $1 \leq k_1, k_2 \leq b, $ denote $$S_n^{k_1,k_2}=\sum_{i=b+1}^n \left( x_{i-k_1} x_{i-k_2}-\mathbb{E}(x_{i-k_1} x_{i-k_2}) \right),$$ 
and $U_i^{k_1,k_2}=x_{i-k_1} x_{i-k_2}-\mathbb{E}(x_{i-k_1} x_{i-k_2}),$ it is easy to check that under the assumption of (\ref{assum_phy}), $\delta^u(j,q) \leq Cj^{-\tau}$ by Cauchy-Schwarz inequality.  As a consequence, by Lemma \ref{lem_con}, we have 
\begin{equation} \label{eq_snbound}
\left| \left| S_n^{k_1,k_2} \right| \right|_q^2 \leq C_q \sum_{j=-n}^{\infty} (\zeta^u_{j+n}-\zeta^u_j)^2, 
\end{equation}
where $\zeta^u_j=\sum_{k=0}^j \delta^u(k,q).$ Therefore, we have  
\begin{equation*}
\left| \left| S_n^{k_1,k_2} \right| \right|_q=O(n^{1/2}).
\end{equation*} 
Under the assumption (\ref{assum_lip}), as $|k_1-k_2| \leq b$ (for instance, we assume $k_1\leq k_2$), we then have 
\begin{equation*}
\left| \mathbb{E}(x_{i-k_1}x_{i-k_2})- \gamma(\frac{i-k_1}{n}, k_2-k_1) \right| \leq \frac{k_2-k_1}{n},
\end{equation*}
where we use (\ref{assum_moment}) and Jensen's inequality. This implies that 
\begin{equation*}
\frac{1}{n} \sum_{i=b+1}^n \mathbb{E}(x_{i-k_1} x_{i-k_2})=\frac{1}{n} \sum_{i=b+1}^n \gamma(\frac{i-k_1}{n},k_2-k_1)+O\left((k_2-k_1)n^{-2}\right).
\end{equation*}
Under Assumption \ref{assu_smmothness}, using \cite[Theorem 1.1]{HT}, we have 
 \begin{equation*}
\frac{1}{n} \sum_{i=b+1}^n \mathbb{E}(x_{i-k_1} x_{i-k_2})=\int_0^1 \gamma(t, k_2-k_1)dt+O((k_2-k_1)n^{-2}).
\end{equation*}
Therefore, combine with (\ref{eq_snbound}), for some $\alpha_3 \in (0,1),$ with $1-O(n^{2\alpha_3-1})$ probability, we have 
\begin{equation*}
\frac{1}{n} \sum_{i=b+1}^n x_{i-k_1} x_{i-k_2}=\int_0^1 \gamma(t, k_2-k_1)dt+O(n^{-\alpha_3}).
\end{equation*}
Similarly, we can show that for $1 \leq c_1, c_2 \leq c,$
\begin{equation}\label{eq_lawoflargenumberwithtime}
\frac{1}{n}\sum_{i=b+1}^{n-1}x_{i-k_1} x_{i-k_2} \alpha_{c_1}(\frac{i}{n}) \alpha_{c_2}(\frac{i}{n})=\int_0^1 \widetilde{\gamma}(t, k_2-k_1, c_1, c_2)+O(n^{-\alpha_3}),
\end{equation}
where $ \widetilde{\gamma}(t, k_2-k_1, c_1, c_2)$ is defined as 
\begin{equation*}
\widetilde{\gamma}(\frac{i-k_1}{n}, k_2-k_1, c_1, c_2)=\gamma(\frac{i-k_1}{n}, k_2-k_1) \alpha_{c_1}(\frac{i-k_1}{n}) \alpha_{c_2}(\frac{i-k_1}{n}).
\end{equation*}
Denote the matrix $\Sigma^{b}(t) \in \mathbb{R}^{b \times b}$ with the entries $\Sigma^{b}_{kl}(t)=\gamma(t, |k-l|), \ 1 \leq k,l \leq b $ and further denote
\begin{equation}\label{eq_sigma}
\Sigma= \int_0^1 \Sigma^{b}(t) \otimes \left( \mathbf{b}(t) \mathbf{b}^*(t) \right) dt.
\end{equation}
Using Lemma \ref{lem_disc}, with $1-O(n^{4/\tau+2\alpha_1+2\alpha_3-1})$ probability, we have
\begin{equation*}
\lambda_{\max}(S-\Sigma) \leq Cn^{2/\tau+\alpha_1-\alpha_3}.
\end{equation*}
Next, we will derive the optimal rate using the concentration inequality for random matrices Lemma \ref{lem_berstein} and the trick of $m$-dependent sequence. In order to deal with the issue of independence, we use the approximation of $m$-dependent sequence. For convenience, for $\mathbf{x}_i, i=b+1, \cdots,n,$ we denote by
\begin{equation*}
\mathbf{x}_i=\mathcal{G}(\frac{i}{n}, \mathcal{F}_i) \in \mathbb{R}^b.
\end{equation*}
We then denote the $m$-approximation sequence by 
\begin{equation}\label{eq_mdependent}
\mathbf{x}_i^M =\mathbb{E} (\mathbf{x}_i | \eta_{i-M}, \cdots, \eta_i ), \ i=b+1,\cdots,n,
\end{equation}
where $\mathbf{x}_i, \mathbf{x}_j$ are independent when $|i-j|>M.$ Under Assumption \ref{assu_phylip}, by the discussion of \cite[Remark 2.3]{CZ}, we conclude that for any $i,$
\begin{equation}\label{eq_mcontrol1}
\mathbb{P}\Big(\sup_j |\mathbf{x}_{ij}-\mathbf{x}_{ij}^M| \geq t \Big) \leq C \frac{(\log b)^{q/2} M^{-q\tau}}{t^q}.
\end{equation}
Recall that $\mathbf{y}_i=\mathbf{x}_i \otimes \mathbf{b}(\frac{i}{n}),$ we now denote $\mathbf{y}_i^M=\mathbf{x}_i^M \otimes \mathbf{b}(\frac{i}{n}),$ by choosing $t=M^{-\tau+2},$ we conclude that with $1-\frac{n (\log b)^{q/2}}{M^{2q}}$ probability, we have 
\begin{equation*}
 \Big| \Big| \frac{1}{n} \sum_{i=b+1}^n \mathbf{y}_i \mathbf{y}_i^* - \frac{1}{n}\sum_{i=b+1}^n \mathbf{y}_i^M (\mathbf{y}_i^M)^* \Big| \Big| \leq C\zeta_c M^{-\tau/2+1}. 
\end{equation*} 
By Assumption \ref{assu_parameter} and the fact $\tau>10$, we can choose $M$ such that $\zeta_c M^{-\tau/2+1}=O(n^{-1}).$ Therefore, it suffices to control the $m$-dependent approximation. Observe that for some constant $C>0$
\begin{align*}
R_n=\frac{1}{n}\sup_i \left| \left| \Big( (\mathbf{y}^M_i)^* \mathbf{y}_i^M-\Sigma \Big) \right| \right|&=\frac{1}{n} \sup_i \left| \left| \Big( (\mathbf{x}_i^* \mathbf{x}_i)\Big(\mathbf{b}^*(\frac{i}{n})\mathbf{b}(\frac{i}{n})\Big)-\Sigma \Big) \right| \right| \\
& \leq C \frac{\zeta_c^2}{n},
\end{align*}
where we use Assumption \ref{assu_phylip} and Lemma \ref{lem_con}. Define $k_0=\lfloor \frac{n-b}{M} \rfloor$ and the index set sequences by 
\begin{equation*}
\mathcal{I}_i=
\begin{cases}
\{b+i+kM: k=0,1,\cdots, k_0 \}, & \text{if} \ b+i+k_0M \leq n, \\
\{b+i+kM: k=0,1,\cdots, k_0-1 \}, & \text{otherwise}   
\end{cases}
\ i=1, 2, \cdots, M. 
\end{equation*}
 By triangle inequality and for some constant $C>0$, we have 
\begin{align*}
\mathbb{P} \Big( \Big| \Big| \frac{1}{n}\sum_{i=b+1}^n \mathbf{y}^M_i(\mathbf{y}_i^M)^*-\Sigma \Big| \Big| \geq t \Big) & \leq  CM \sup_i \mathbb{P} \Big( \Big|\Big| \frac{1}{n} \sum_{k \in \mathcal{I}_i} \mathbf{y}_k^M (\mathbf{y}_k^M)^*-\Sigma \Big|\Big| \geq t/M \Big) \\
& \leq CMbc \exp\Big(\frac{-t^2/(2M^2)}{\sigma_M^2+R_Mt/3M} \Big),
\end{align*}
where we apply Lemma \ref{lem_berstein}. To conclude our proof, we need to choose $M$ properly.   By definition, it is elementary to see that
\begin{equation*}
\sigma_M^2 \leq C \frac{\zeta_c^2}{n} \frac{M}{n}.
\end{equation*}
Now we choose $M=O(n^{1/3}),$ then $\sigma_M^2=O(\frac{\zeta_c^2}{n} n^{-2/3}).$ Hence, by choosing $t=O(\frac{\zeta_c}{\sqrt{n}} \sqrt{\log n}),$ we conclude that
\begin{equation} \label{eq_optimalrate}
\Big| \Big | S-\Sigma \Big| \Big|=O_{\mathbb{P}}(\frac{\zeta_c}{\sqrt{n}} \sqrt{\log n}).
\end{equation} 
Once we have (\ref{eq_optimalrate}), we can almost take the varbartim of the proof of  of \cite[Lemma 2.3]{CC} to finish proving our theorem. Denote 
\begin{equation*}
\check{\phi}_j(t)=\phi_{j}(t)-\widehat{\phi}_j(t). 
\end{equation*}
For $t, t^* \in [0,1],$ by mean value theorem and Assumption \ref{assu_derivativebasis}, for some constant $C,$ we have 
\begin{align*}
|\check{\phi}_j(t)-\check{\phi}_j(t^*)|&=\Big|(\mathbb{B}_j(t)-\mathbb{B}_j(t^*))^* \Big( Y^*Y/n \Big)^{-1} \frac{Y^* \bm{\epsilon}}{n} \Big| \\
& \leq C n^{\omega_1}c^{\omega_2} | t-t^*| \Big|  \frac{Y^* \bm{\epsilon}}{n} \Big|,
\end{align*}
where $\mathbb{B}_j(t)$ is defined in (\ref{eq_bj}). For some constant $\overline{M}>0,$ denote the event $\mathcal{B}_n$ as the event such that $C\Big|  \frac{Y^* \bm{\epsilon}}{n} \Big| \leq \overline{M},$ where it can be easily checked that $\mathbb{P}(\mathcal{B}_n^c)=o(1).$
By Assumption \ref{assu_phylip} and \ref{assu_basisregularity}, Lemma \ref{lem_con}, (\ref{eq_optimalrate}) and a similar discussion to \cite[equations (42) and (43)]{CC}, we conclude that on $\mathcal{B}_n,$ for some constant $C_1>0,$ there exists some positives  $\eta_1, \eta_2$ such that
\begin{equation*}
Cn^{\omega_1} c^{\omega_2}|t-t^*|\Big|  \frac{Y^* \bm{\epsilon}}{n} \Big| \leq C_1 \zeta_c \sqrt{\frac{\log n}{n}},
\end{equation*}
whenever $|t-t^*| \leq \eta_1 n^{-\eta_2}.$ Denote $\mathcal{S}_n$ be the smallest subset of $[0,1]$ such that for each $t \in [0,1],$ there exists a $t_n \in \mathcal{S}_n$ with $|t_n-t| \leq \eta_1 n^{-\eta_2}.$ For any $t \in [0,1],$ let $t_n(t)$ denote as the distance of $t_n \in \mathcal{S}_n$ to $t$. Then using a similar discussion to equations (44)-(47) of \cite{CC}, we conclude that 
\begin{equation*}
\mathbb{P} \left( \sup_t |\check{\phi}_j(t)| \geq 4C \zeta_c\sqrt{(\log n)/n} \right) \leq \mathbb{P} \Big( \Big\{ \max_{t_n \in \mathcal{S}_n} |\check{\phi}_j(t_n)| \geq 2C \zeta_c \sqrt{(\log n)/n}  \Big\}\cap \mathcal{B}_n\Big)+o(1).
\end{equation*}
The rest of the work leaves to control the above probability using  (\ref{eq_optimalrate}), Assumption \ref{assu_phylip} and Lemma \ref{lem_con}. We first observe that 
\begin{align}
& \mathbb{P} \Big( \Big\{ \max_{t_n \in \mathcal{S}_n} |\check{\phi}_j(t_n)| \geq 2C \zeta_c \sqrt{(\log n)/n}  \Big\}\cap \mathcal{B}_n\Big) \nonumber \\
& \leq \mathbb{P} \Big( \max_{t_n \in \mathcal{S}_n} \left| \mathbb{B}_j^*(t_n) (S^{-1}-\Sigma^{-1}) Y^* \bm{\epsilon}/n \right| \geq C \zeta_c \sqrt{(\log n)/n} \Big)\label{eq_finalcontrol1} \\
&+\mathbb{P} \Big( \max_{t_n \in \mathcal{S}_n}| \mathbb{B}_j(t_n)\Sigma^{-1} Y^* \bm{\epsilon}/n| \geq C \zeta_c \sqrt{(\log n)/n} \Big). \label{eq_finalcontrol2}
\end{align}
By (\ref{eq_optimalrate}), (\ref{eq_finalcontrol1}) can be controlled easily using the fact that $\Big|\frac{Y^* \bm{\epsilon}}{n} \Big|=O_{\mathbb{P}}(\sqrt{bc/n}).$ To control (\ref{eq_finalcontrol2}), we adopt the truncation from \cite{CC}. Denote $\mathcal{A}_n$ as the event on which $|| S-\Sigma || \leq \frac{1}{2}$ and (\ref{eq_optimalrate}) implies that $\mathbb{P}(\mathcal{A}_n^c)=o(1).$ Denote $\{M_n: n \geq 1 \}$ be an increasing sequence diverging to $+\infty$ and define
\begin{equation*}
\epsilon_{1,i,n}:=\epsilon_i\mathbf{1}(|\epsilon_i| \leq M_n)-\mathbb{E}[\epsilon_i \mathbf{1}(|\epsilon_i| \leq M_n)| \mathcal{F}_{i-1}],
\end{equation*}
\begin{equation*}
\epsilon_{2,i,n}=\epsilon_i-\epsilon_{1,i,n}, \ g_{i,n}(t_n)=\mathbb{B}_j(t_n)^* \Sigma^{-1} \mathbb{B}_j(\frac{i}{n}) \mathbf{1}(\mathcal{A}_n).
\end{equation*}
As a consequence,   we have
\begin{align} \label{eq_ysbound}
& \mathbb{P} \Big( \max_{t_n \in \mathcal{S}_n}| \mathbb{B}_j(t_n)\Sigma^{-1} Y^* \bm{\epsilon}/n| \geq C \zeta_c \sqrt{(\log n)/n} \Big) \nonumber \\
& \leq (\# \mathcal{S}_n) \max_{t_n \in \mathcal{S}_n} \mathbb{P} \Big( \Big\{ \Big| \frac{1}{n} \sum_{i=b+1}^n g_{i,n} \epsilon_{1,i,n} \Big| \Big\} >\frac{C}{2} \zeta_c \sqrt{(\log n)/n} \cap \mathcal{A}_n \Big) \nonumber \\
&+ \mathbb{P} \Big( \max_{t_n \in \mathcal{S}_n} \Big| \frac{1}{n} \sum_{i=b+1}^n g_{i,n} \epsilon_{2,i,n} \Big| \geq \frac{C}{2} \zeta_c \sqrt{(\log n)/n} \Big)+o(1). 
\end{align}
By the discussion of equations (65b) and (70) of \cite{CC}, we can choose $M_n=O\left(\zeta_c^{-1} \sqrt{n/(\log n)}\right),$ then the above bound can be controlled by $o(1).$ We can conclude our proof using Assumption \ref{assu_parameter}. 
\end{proof}

\begin{proof}[Proof of Theorem \ref{thm_phiileqb}] For each fixed $i \leq b,$ denote 
\begin{equation}\label{eq_di}
\mathbf{d}_i=(d_{11}, \cdots, d_{1c}, d_{21}, \cdots, d_{2c}, \cdots, d_{i-1,1}, \cdots, d_{i-1, c}) \in \mathbb{R}^{(i-1)c}.
\end{equation}
Hence, the OLS estimator of $\mathbf{d}_i$ can be written as 
\begin{equation} \label{eq_hatdi}
\widehat{\mathbf{d}}_i=(\mathbf{Y}_i^* \mathbf{Y}_i)^{-1} \mathbf{Y}_i^* \mathbf{x}^i, \ i \leq b,
\end{equation}
where $\mathbf{Y}_i^* $ is a $(i-1)c \times (n+1-i)$ rectangular matrix whose columns are $\mathbf{y}_i^k:=(x_{k-1}, \cdots, x_{k-i+1})^* \otimes \mathbf{b}(\frac{k}{n}) \in \mathbb{R}^{(i-1)c}, \ k=i, \cdots, n,$ with  $\mathbf{x}^i=(x_i, \cdots, x_n).$ Therefore, similar to (\ref{phi_diff}), we have 
\begin{equation*}
f^i_j(\frac{i}{n})-\widehat{f}^i_j(\frac{i}{n})=(\mathbf{d}_{i,j}-\widehat{\mathbf{d}}_{i,j})^* \mathbf{b}(\frac{i}{n}),
\end{equation*} 
where $\mathbf{d}_{i,j} \in \mathbb{R}^c$ is the $j$-th block of $\mathbf{d}_i, $ similarly for $\widehat{\mathbf{d}}_{i,j}.$  The rest of the proof relies on the following equation and Assumption \ref{assu_basisregularity} 
\begin{equation*}
\mathbf{d}_i-\widehat{\mathbf{d}}_i=(\mathbf{Y}_i^* \mathbf{Y}_i)^{-1} \mathbf{Y}_i^* \bm{\epsilon}^i,
\end{equation*}
where $\bm{\epsilon}^i=(\epsilon_i, \cdots, \epsilon_n).$ For the rest of the proof, we can almost take the verbatim as that of  Theorem \ref{thm_timevaryingcoeff}.

\end{proof}

\begin{proof}[Proof of Theorem \ref{thm_varaincefunction}]  Note
\begin{equation*}
\widehat{\bm{\alpha}}=\bm{\alpha}+\left(\frac{W^*W}{n} \right)^{-1} \frac{W^* \bm{\omega}}{n}+O_{\mathbb{P}}\Big(n^{2/\tau}\Big(\zeta_c \sqrt{\frac{\log n}{n}}+n^{-d\alpha_1} \Big) \Big),
\end{equation*}
where $\bm{\omega}=(\omega_{b+1}, \cdots, \omega_n)^*$ and the error is entrywise. Therefore, the only difference from that of Theorem \ref{thm_timevaryingcoeff} is that $W$ is a deterministic matrix. We ignore the further detail here. 
\end{proof}

\begin{proof}[Proof of Theorem \ref{thm_variancefunctionileqb}] The proof is similar to that of Theorem \ref{thm_timevaryingcoeff}, except that we need to analyze the residual (\ref{eq_residualileqb}).  However, it is easy to see that $\widehat{r}^i_k$ is a locally stationary time series with polynomial decay physical dependence measure. Hence,  we can almost take the verbatim except for some constants.

\end{proof}

\begin{proof}[Proof of Theorem \ref{prop_covest}] Using the fact that any two compatible matrices $A, B$, $AB$ and $BA$ have the same non-zero eigenvalues, for some constant $C>0,$ we have 
 \begin{equation*}
\left | \left| \Omega-\widehat{\Omega} \right| \right| \leq C ||E||,
 \end{equation*}
 where $E$ has the following form of decomposition 
\begin{align*}
E&=E_1+E_2+E_3 \\
& =\left[ \mathbf{\widetilde{D}}-\mathbf{\widehat{\widetilde{D}}} \right]+\left[ \mathbf{\widehat{\widetilde{D}}} \left( \widehat{\Phi}^{-1}-\Phi^{-1} \right)^* \Phi^*+\Phi \left( \widehat{\Phi}^{-1}-\Phi^{-1} \right) \mathbf{\widehat{\widetilde{D}}} \right] \\
&+\left[ \Phi\left(\widehat{\Phi}^{-1}-\Phi^{-1}\right) \mathbf{\widehat{\widetilde{D}}} \left(\widehat{\Phi}^{-1}-\Phi^{-1}\right)^* \Phi^* \right]. 
\end{align*} 

Denote $B:=\mathbf{\widehat{\widetilde{D}}}(\Phi^{-1})^*(\Phi-\widehat{\Phi})^*(\widehat{\Phi}^{-1})^* \Phi^*, $ we therefore have $||E_2|| \leq 2 ||B||.$ We further
denote $R_{\Phi}:=\Phi-\widehat{\Phi},$ we first observe that $R_{\Phi}=0, \ i \leq j.$ Then by Lemma \ref{lem_borderapproximation}, Lemma \ref{lem_approxphi},  Theorem \ref{thm_timevaryingcoeff} and \ref{thm_phiileqb}, for $ i \leq b$ or $j \leq
 b \leq i,$ $(R_{\Phi})_{ij}=O_{\mathbb{P}}(\zeta_c \sqrt{(\log n)/n}+n^{-d\alpha_1}).$ And for $i>b, j>b,|(R_{\Phi})_{ij} | \leq j^{-\tau}.$ This implies that
\begin{equation*}
 \lambda_{\max} \left( (\Phi-\widehat{\Phi})(\Phi-\widehat{\Phi})^* \right)=O_{\mathbb{P}}\Big( n^{4/\tau}\zeta^2_cn^{-1} \log n + n^{-2d\alpha_1+\frac{4}{\tau}} \Big),
\end{equation*}
where we use Lemma \ref{lem_disc}.     As a consequence, by submultiplicaticity, for some constant $C>0,$ we have that 
\begin{equation*}
|| E_2 ||=O_{\mathbb{P}}\Big(  \zeta_c \sqrt{\frac{\log n}{n}}+n^{-d\alpha_1} \Big).
\end{equation*} 
Similarly, we can show that
\begin{equation*}
||E_3||=O_{\mathbb{P}}\Big( n^{4/\tau}\zeta^2_cn^{-1} \log n + n^{-2d\alpha_1+\frac{4}{\tau}} \Big).
\end{equation*}
Denote the centered random variables $\omega_i^b=r_i^b-(\sigma_i^b)^2,$ by Lemma \ref{lem_sigmaigeqb} and (\ref{eq_rapprox}), we have 
\begin{equation}\label{eq_rhatbound}
\widehat{r}_i^b=g(\frac{i}{n})+\omega_i^b+O_{\mathbb{P}}\Big(n^{2/\tau}\Big(\zeta_c \sqrt{\frac{\log n}{n}}+n^{-d\alpha_1} \Big) \Big).
\end{equation}
By (\ref{eq_rhatbound}),  Theorem \ref{thm_varaincefunction} and \ref{thm_variancefunctionileqb} and Assumption \ref{assu_parameter}, we conclude that 
\begin{equation*}
||E_1||=O_{\mathbb{P}}\left( \zeta_c n^{4/\tau} \sqrt{\frac{\log n}{n}}+n^{-d \alpha_1+4/\tau}\right).
\end{equation*}
Hence, we have finished our proof. 
\end{proof}

\begin{proof}[Proof of Theorem \ref{thm_gaussian}] By \cite[Lemma 7.2]{XZW}, we find that 
\begin{equation} \label{lem_gap}
\sup_{x \in \mathbb{R}} \mathbb{P}(x \leq R^u \leq x+\psi^{-1})=O(\psi^{-1/2}). 
\end{equation}
Denote 
\begin{equation*}
g_0(u)=(1-\min(1, \max(u,0))^4)^4,
\end{equation*}
it is easy to check that (see the proof of \cite[Proposition 2.1 and Theorem 2.2]{XZW})
\begin{equation} \label{indicatorfunctionbound}
\mathbf{1}(y \leq x) \leq g_{\psi,x}(y) \leq \mathbf{1}(y \leq x+\psi^{-1}),
\end{equation}
\begin{equation}\label{gderivativebound}
\sup_{y,x} |g^{\prime}_{\psi,x}(y)| \leq g_* \psi, \  \sup_{y,x} |g^{\prime \prime}_{\psi,x}(y)| \leq g_* \psi^2, \ \sup_{y,x} |g^{\prime \prime \prime}_{\psi,x}(y)| \leq g_* \psi^3,
\end{equation}
where $g_{\psi, x}(y):=g_0(\psi(y-x))$ and 
$$ g_*=\max_y \left[|g^{\prime}_0(y)|+|g^{\prime \prime}_0(y)|+|g^{\prime \prime \prime}_0(y)|\right]<\infty.$$
 By (\ref{lem_gap}), (\ref{indicatorfunctionbound}) and a similar discussion to equations (7.5) and (7.6) of \cite{XZW}, $\rho$ can be well controlled if we let $\psi \rightarrow \infty$ and bound 
\begin{equation}\label{key_proof}
\sup_x\left|\mathbb{E}g_{\psi,x}(R^u)-\mathbb{E}g_{\psi,x}(R^z)\right|.
\end{equation}
The rest of the proof leaves to control (\ref{key_proof}). The proof relies on two main steps: (i). an $m$-dependent sequence approximation for the locally stationary time series; (ii). a leave-one-block out argument to control the bounded $m$-dependent time series. We start with step (i) and control the error between the $m$-dependent sequence approximation and the original time series. Recall (\ref{eq_mdependent}), we denote by
\begin{align}\label{M_dependence_const}
\mathbf{z}_i^M=(z_{i1}^M, \cdots, z_{ip}^M)&=\mathbb{E}[\mathbf{z}_i \vert \eta_{i-M},\cdots, \eta_i], \nonumber \\
&=\mathbf{x}_i^M \epsilon_i^M \otimes \mathbf{b}(\frac{i}{n}), 
\end{align}
be an $m$-dependent approximation for $\mathbf{z}_i,$ where $ \epsilon_i^M$ are defined using $\mathbf{x}_i^M.$ Similarly, we can define $R^{zM}$ by replacing $\mathbf{Z}$ with $\mathbf{Z}^M.$ Therefore, by (\ref{gderivativebound}) and the definition of $g_{\psi, x}$, there exists some constant $C>0,$ for some small $\Delta_M>0,$ we have  
\begin{equation}\label{mapprox_tri}
\left| \mathbb{E}[g_{\psi,x}(R^z)-g_{\psi,x}(R^{zM})] \right| \leq Cp\psi \Delta_M+C \mathbb{E}[1-\mathcal{I}_M],
\end{equation}
where $\mathcal{I}_M:=\mathbf{1}\{ \max_{1 \leq j \leq p}|\mathbf{Z}_j-\mathbf{Z}_j^M| \leq \Delta_M\}$. Here we use Lemma \ref{lem_con}, mean value theorem and Cauchy-Schwartz inequality.  We use the following lemma to control  the right-hand side of (\ref{mapprox_tri}) by suitably choosing $\Delta_M. $ Recall (\ref{eq_zis}), we denote the physical dependence measure of $z_{kl}$ as $\delta^z_{kl}(s,q)$ and 
\begin{equation*}
\theta_{s,l,q}:=\sup_{k} \delta_{kl}^z(s,q), \ \Theta_{s,l, q}=\sum_{o=s}^{\infty} \theta_{o,l,q}.
\end{equation*}
By Assumption \ref{assu_phylip} and Lemma \ref{lem_epsilon}, the above physical dependence measures satisfy
\begin{equation}\label{eq_phytheta}
\sup_{ 1 \leq l \leq p} \Theta_{s, l, q}<\xi_c,  \ \sum_{s=1}^{\infty} \sup _{1 \leq l \leq p} s \theta_{s,l,3} < \xi_c. 
\end{equation}

Armed with the above preparation, we now control the right-hand side of (\ref{mapprox_tri}) using the following lemma. We put its proof into Appendix \ref{appendix_a}.

\begin{lem}\label{lem_m_rough} Under Assumption \ref{assu_phylip},  for some constant $C_1>0,$ we have 
\begin{equation}\label{lem_m_rough_eq}
p\psi \Delta_M+ \mathbb{E}[1-\mathcal{I}_M] \leq C_1 \Big( p \psi \Big)^{\frac{q}{q+1}} \Big( \sum_{k=1}^p \Theta_{M,k,q}^q \Big)^{\frac{1}{q+1}}.
\end{equation}
\end{lem}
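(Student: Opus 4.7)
The plan is to optimize the free parameter $\Delta_M$ in the bound $p\psi\Delta_M + \mathbb{E}[1-\mathcal{I}_M]$ by trading it off against a Markov-type tail bound for $\mathcal{I}_M$. The key input is a physical-dependence moment inequality showing that the $q$-norm of the $m$-truncation error is governed by the tail sum $\Theta_{M,j,q}$ rather than the full dependence sum.

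First, I would apply a union bound together with the $q$-th moment Markov inequality to get
\begin{equation*}
\mathbb{E}[1-\mathcal{I}_M] \;=\; \mathbb{P}\Bigl(\max_{1\le j\le p} |\mathbf{Z}_j-\mathbf{Z}_j^M| > \Delta_M\Bigr)
\;\le\; \Delta_M^{-q}\sum_{j=1}^p \bigl\|\mathbf{Z}_j-\mathbf{Z}_j^M\bigr\|_q^q.
\end{equation*}
Next, since $\mathbf{Z}_j-\mathbf{Z}_j^M = n^{-1/2}\sum_{i=b+1}^n(z_{ij}-z_{ij}^M)$, I would control the $q$-norm of this centered sum using the Burkholder-Rosenthal inequality from Lemma \ref{lem_con}. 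The observation to exploit is that $z_{ij}^M$ is $\sigma(\eta_{i-M},\ldots,\eta_i)$-measurable, so perturbing $\eta_{i-s}$ for $s>M$ affects $z_{ij}$ (by at most $\theta_{s,j,q}$) but leaves $z_{ij}^M$ unchanged, while for $s\le M$ both change by at most $\theta_{s,j,q}$; the martingale cancellation then isolates precisely the tail contribution, giving
\begin{equation*}
\bigl\|\mathbf{Z}_j-\mathbf{Z}_j^M\bigr\|_q \;\le\; C\,\Theta_{M,j,q}.
\end{equation*}

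Combining these two steps produces
\begin{equation*}
p\psi\Delta_M + \mathbb{E}[1-\mathcal{I}_M] \;\le\; p\psi\Delta_M + C\,\Delta_M^{-q}\sum_{j=1}^p \Theta_{M,j,q}^q,
\end{equation*}
and minimizing the right-hand side in $\Delta_M$ by calculus yields the optimal choice $\Delta_M = \bigl(\sum_j \Theta_{M,j,q}^q/(p\psi)\bigr)^{1/(q+1)}$, at which both terms are of the same order. Substituting back gives the stated bound $C_1(p\psi)^{q/(q+1)}\bigl(\sum_k \Theta_{M,k,q}^q\bigr)^{1/(q+1)}$.

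The main obstacle is the moment estimate in the second step: one needs a Burkholder-type bound for the normalized sum of a non-stationary, $m$-truncation-error process that depends on $M$ through $\Theta_{M,j,q}$ rather than through the full dependence sum $\Theta_{0,j,q}$. This hinges on correctly accounting for the way the conditional expectation $z_{ij}^M$ absorbs the contributions from short lags so that only the dependence tail beyond $M$ drives the $q$-norm; once this is in place the remainder of the argument is a routine convex optimization.
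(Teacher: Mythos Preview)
Your proposal is correct and follows essentially the same route as the paper: a union bound plus the $q$-th moment Markov inequality, the moment estimate $\|\mathbf{Z}_j-\mathbf{Z}_j^M\|_q \le C\,\Theta_{M,j,q}$ for the $m$-approximation error, and then optimization over $\Delta_M$. The only cosmetic difference is that the paper cites \cite[Lemma~A.1]{LL} for the moment bound whereas you invoke the in-paper Burkholder-type inequality (Lemma~\ref{lem_con}) and sketch the martingale-tail mechanism yourself; both lead to the same estimate.
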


By choosing a sufficiently large $M$, the right-hand side of (\ref{lem_m_rough_eq}) will be of order $o(1).$ Next we will use the leave-one-block out argument to show that the difference between two $m$-dependent sequences can be well controlled.  Its proof relies on Stein's  method. 

We now introduce the dependency graph strictly following \cite[Section 2.1]{ZC}. For the sequence of $p$-dimensional random vectors $\{\mathbf{z}_k \}_{i=b+1}^n$, we call it dependency graph $G_n=(V_n, E_n),$ where $V_n=\{b+1,\cdots,n\}$ is a set of vertices and $E_n$ is the corresponding set of undirected edges. For any two disjoint subsets of vertices $S,T \subset V_n,$ if there is no edge from any vertex in $S$ to any vertex in $T,$ the collections of the corresponding $\mathbf{z}_k$ will be independent. We further denote $D_{\max,n}$ as the maximum degree of $G_n$ and $D_n=1+D_{\max, n}.$ Next we provide a rough bound for $R^z, R^u$ in terms of the the maximum degree of $G_n.$ Denote 
\begin{equation}\label{eq_F}
F(x)=g_{\psi,x} \circ f, \ \text{where} \ f(x)=x^*Ex, \ x \in \mathbb{R}^{p}.
\end{equation}
We further define the bounded random variables $\widetilde{\mathbf{z}}_{kl}=(\mathbf{z}_{kl} \wedge M_x) \vee (-M_x)-\mathbb{E}[(\mathbf{z}_{kl} \wedge M_x) \vee (-M_x)]$ and $\widetilde{\mathbf{u}}_{kl}=(\mathbf{u}_{ij} \wedge M_y) \vee (-M_y)-\mathbb{E}[(\mathbf{u}_{ij} \wedge M_y) \vee (-M_y)]$ for some $M_x, M_y>0.$  For some small $\Delta>0,$ denote 
\begin{equation}\label{not_delta}
\mathcal{I}:=\mathcal{I}_{\Delta}=\mathbf{1}\{ \max_{1 \leq j \leq p} |\mathbf{Z}_j-\widetilde{\mathbf{Z}}_j|<\Delta, |\max_{1 \leq j \leq p} |\mathbf{U}_j-\widetilde{\mathbf{U}}_j|<\Delta\},
\end{equation}
where $\widetilde{\mathbf{Z}}_j, \widetilde{\mathbf{U}}_j$ are defined using $\widetilde{\mathbf{z}}_{kl}, \widetilde{\mathbf{u}}_{kl}.$
We next denote $N_k=\{l:\{k,l \in E_n\}\}$ and $\widetilde{N}_k=\{k\} \cup N_k.$ Let $\phi(M_x)$ be a constant depending on the threshold parameter $M_x$ such that 
\begin{equation*}
\max_{1 \leq \alpha,\beta \leq p} \frac{1}{n} \sum_{k=b+1}^n \left|\sum_{s \in \widetilde{N}_k}(\mathbb{E}\mathbf{z}_{k\alpha}\mathbf{z}_{s\beta}-\mathbb{E}\widetilde{\mathbf{z}}_{k\alpha}\widetilde{\mathbf{z}}_{s\beta}) \right| \leq \phi(M_x).
\end{equation*}
Analogous quantity $\phi(M_y)$ can be defined for $\{\mathbf{u}_i\}.$ Set $\phi(M_x, M_y)=\phi(M_x)+\phi(M_y)$ and define 
\begin{equation*}
m_{z,k}=(\bar{\mathbb{E}}\max_{1 \leq l \leq p}|\mathbf{z}_{sl}|^k)^{1/k}, \ m_{u,k}=(\bar{\mathbb{E}}\max_{1 \leq l \leq p}|\mathbf{u}_{sl}|^k)^{1/k},
\end{equation*}
\begin{equation*}
\bar{m}_{z,k}=(\max_{1 \leq l \leq p}\bar{\mathbb{E}}|\mathbf{z}_{sl}|^k)^{1/k}, \ \bar{m}_{u,k}=(\max_{1 \leq l \leq p}\bar{\mathbb{E}}|\mathbf{u}_{sl}|^k)^{1/k},
\end{equation*}
where $\bar{\mathbb{E}}(\mathbf{z}_{sl})=\frac{\sum_{s=b+1}^n \mathbb{E} \mathbf{z}_{sl}}{n}.$ The following lemma provides a rough bound for (\ref{key_proof}) in terms of $\Delta,$ which can be improved later for the $m$-dependent sequence. Its proof can be found in Appendix \ref{appendix_a}.

\begin{lem}\label{lem_priorbound} For any $\Delta>0$ defined in (\ref{not_delta}),  denote $M_{xy}=\max\{M_x, M_y\},$ for some constant $C>0,$  we have
\begin{align*}
\sup_x & \Big | \mathbb{E}[g_{\psi,x}(R^u)-g_{\psi,x}(R^z)] \Big | \\
 & \leq C p\psi \Delta+C\mathbb{E}(1-\mathcal{I})+C  \phi(M_x, M_y)\psi^2 p^4+\frac{CD_n^3}{\sqrt{n}}(m^3_{x,3}+m^3_{y,3}) \psi^3 p^6.
\end{align*}
\end{lem}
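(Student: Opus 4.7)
The plan is a standard Stein's-method-plus-truncation argument in the spirit of \cite{AR, ZC}: decompose the target into (i) a truncation error in passing from $\mathbf{Z}, \mathbf{U}$ to $\widetilde{\mathbf{Z}}, \widetilde{\mathbf{U}}$ and (ii) a smooth comparison of $\mathbb{E}F(\widetilde{\mathbf{Z}})$ with $\mathbb{E}F(\widetilde{\mathbf{U}})$ exploiting the dependency-graph structure. By the triangle inequality,
\begin{align*}
\bigl|\mathbb{E}[g_{\psi,x}(R^u) - g_{\psi,x}(R^z)]\bigr|
&\leq \bigl|\mathbb{E}[g_{\psi,x}(R^u) - g_{\psi,x}(R^{\widetilde u})]\bigr| + \bigl|\mathbb{E}[g_{\psi,x}(R^{\widetilde z}) - g_{\psi,x}(R^z)]\bigr| \\
&\quad + \bigl|\mathbb{E}[g_{\psi,x}(R^{\widetilde u}) - g_{\psi,x}(R^{\widetilde z})]\bigr|.
\end{align*}

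For the truncation terms, the identity $R^z - R^{\widetilde z} = (\mathbf{Z}-\widetilde{\mathbf{Z}})^* E(\mathbf{Z}+\widetilde{\mathbf{Z}})$ combined with $\|\mathbf{Z}-\widetilde{\mathbf{Z}}\|_\infty < \Delta$ on $\mathcal{I}$ gives $|R^z - R^{\widetilde z}| \leq C\sqrt{p}\,\Delta\,(\|\mathbf{Z}\|+\|\widetilde{\mathbf{Z}}\|)$. Multiplying by the Lipschitz constant $g_*\psi$ from \eqref{gderivativebound}, taking expectation, and using the coordinate-wise moment bound $\mathbb{E}\|\mathbf{Z}\|^2 = O(p)$ supplied by Lemma \ref{lem_con}, this part contributes $O(p\psi\Delta)$; off $\mathcal{I}$ the uniform bound $\|g_0\|_\infty \leq 1$ gives the $\mathbb{E}(1-\mathcal{I})$ term.

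For the central comparison, observe that $F = g_{\psi,x}\circ f$ with $f(y)=y^*Ey$; the chain rule and \eqref{gderivativebound} give pointwise bounds $|\partial_j F(y)| \lesssim \psi|(Ey)_j|$, $|\partial^2_{jk} F(y)| \lesssim \psi^2|(Ey)_j||(Ey)_k|+\psi|E_{jk}|$, and an analogous formula for the third derivative. I introduce the Gaussian interpolation $\mathbf{W}_t = \sqrt{t}\widetilde{\mathbf{Z}} + \sqrt{1-t}\widetilde{\mathbf{U}}$ and write
\[
\mathbb{E}F(\widetilde{\mathbf{Z}}) - \mathbb{E}F(\widetilde{\mathbf{U}}) = \int_0^1 \Bigl\{\tfrac{1}{2\sqrt{t}}\mathbb{E}[\nabla F(\mathbf{W}_t)\cdot\widetilde{\mathbf{Z}}] - \tfrac{1}{2\sqrt{1-t}}\mathbb{E}[\nabla F(\mathbf{W}_t)\cdot\widetilde{\mathbf{U}}]\Bigr\}\,dt.
\]
Gaussian integration by parts on the $\widetilde{\mathbf{U}}$ piece gives $\sqrt{1-t}\,\mathbb{E}[\mathrm{tr}(\nabla^2 F(\mathbf{W}_t)\,\mathrm{Cov}(\widetilde{\mathbf{U}}))]$. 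For the $\widetilde{\mathbf{Z}}$ piece I Taylor-expand $\nabla F(\mathbf{W}_t)$ around $\mathbf{W}_t^{(k)} = \mathbf{W}_t - \sqrt{t}\,n^{-1/2}\sum_{s\in\widetilde{N}_k}\widetilde{\mathbf{z}}_s$ for each $k$; by the definition of the dependency graph, $\widetilde{\mathbf{z}}_k$ is independent of $\mathbf{W}_t^{(k)}$, so the zeroth-order term vanishes and the first-order term reduces to a trace involving $\sum_{s\in\widetilde{N}_k}\mathbb{E}[\widetilde{\mathbf{z}}_k\widetilde{\mathbf{z}}_s^*]$ and $\mathbb{E}[\nabla^2 F(\mathbf{W}_t^{(k)})]$. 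Its discrepancy with the Gaussian trace is precisely the covariance mismatch controlled by $\phi(M_x, M_y)$; summing the Hessian bound $|\partial^2_{jk}F|\lesssim \psi^2$ over four free indices produces the factor $p^4$ and yields the $\phi(M_x,M_y)\psi^2 p^4$ contribution.

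Finally, the second-order Taylor remainder is a triple product of $\widetilde{\mathbf{z}}_s/\sqrt{n}$ terms drawn from $\widetilde{N}_k$; summing over neighborhoods gives a combinatorial factor $D_n^3$ per summand and $n$ summands overall, so $n\cdot(D_n^3/n^{3/2})=D_n^3/\sqrt{n}$. Combining this with the third-derivative bound on $F$ (a factor $\psi^3$ and $p^6$ from summing over six free indices) and the third moments $m_{x,3}^3+m_{y,3}^3$ delivers the final $CD_n^3(m_{x,3}^3+m_{y,3}^3)\psi^3 p^6/\sqrt{n}$. The $1/\sqrt{t}$ and $1/\sqrt{1-t}$ singularities in the interpolation integral are absorbed because each Taylor term carries a compensating factor of $\sqrt{t}$ or $\sqrt{1-t}$. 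The main obstacle will be the careful bookkeeping in the first-order Taylor comparison, to ensure that the mean-zero cross terms truly cancel against the Gaussian side so that only the covariance-mismatch discrepancy survives, and to verify the exact combinatorial powers of $p$ and $D_n$ claimed in each resulting term.
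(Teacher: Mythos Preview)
Your overall architecture matches the paper's: truncate to pass from $(R^z,R^u)$ to $(\widetilde R^z,\widetilde R^u)$ at cost $Cp\psi\Delta + C\mathbb{E}(1-\mathcal{I})$, then compare $\mathbb{E}F(\widetilde{\mathbf{Z}})$ and $\mathbb{E}F(\widetilde{\mathbf{U}})$ by a smooth interpolation and a leave-one-block-out Taylor expansion. The paper, however, treats the $\widetilde{\mathbf{z}}$ and $\widetilde{\mathbf{u}}$ parts \emph{symmetrically}: it sets $Z_i(t) = n^{-1/2}(\sqrt t\,\widetilde{\mathbf z}_i+\sqrt{1-t}\,\widetilde{\mathbf u}_i)$ and expands $\partial_j F(Z(t))$ around the leave-one-neighborhood point $Z^{(i)}(t)$, so that the first-order coefficient is directly
\[
\mathbb E\bigl[\dot Z_{ij}(t)V^{(i)}_k(t)\bigr]=\frac1n\sum_{s\in\widetilde N_i}\bigl[\mathbb E\widetilde{\mathbf z}_{ij}\widetilde{\mathbf z}_{sk}-\mathbb E\widetilde{\mathbf u}_{ij}\widetilde{\mathbf u}_{sk}\bigr],
\]
i.e.\ the covariance mismatch bounded by $\phi(M_x,M_y)$ appears in one stroke. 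Your asymmetric variant (Gaussian integration by parts on $\widetilde{\mathbf U}$, leave-one-out only on $\widetilde{\mathbf Z}$) can be made to work, but you then have to recombine two separate Hessian traces, evaluated at different points, before the same cancellation becomes visible.

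There is a genuine gap in your first-order step. You assert that after expanding $\nabla F(\mathbf W_t)$ around $\mathbf W_t^{(k)}=\mathbf W_t-\sqrt t\,n^{-1/2}\sum_{s\in\widetilde N_k}\widetilde{\mathbf z}_s$, the first-order term factors as $\mathbb E[\nabla^2 F(\mathbf W_t^{(k)})]\cdot\sum_{s\in\widetilde N_k}\mathbb E[\widetilde{\mathbf z}_k\widetilde{\mathbf z}_s^*]$. But $\mathbf W_t^{(k)}$ is independent of $\widetilde{\mathbf z}_k$ only, \emph{not} of $\widetilde{\mathbf z}_s$ for $s\in\widetilde N_k\setminus\{k\}$: the remaining summands $\widetilde{\mathbf z}_l$, $l\notin\widetilde N_k$, in $\mathbf W_t^{(k)}$ may well be neighbors of such $s$. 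The paper resolves this with a \emph{second} leave-one-out: it introduces the expanded neighborhood $\mathcal N_i=\{j:\{j,k\}\in E_n\text{ for some }k\in N_i\}$ and Taylor-expands once more around $\mathcal Z^{(i)}(t)=Z(t)-\sum_{l\in\mathcal N_i\cup\widetilde N_i}Z_l(t)$, producing the split $I_2=I_{21}+I_{22}$. Only at $\mathcal Z^{(i)}$ is the Hessian independent of the pair $(\dot Z_{ij},V^{(i)}_k)$, so the factorization and the $\phi(M_x,M_y)\psi^2p^4$ bound go through for $I_{21}$; the additional remainder $I_{22}$ is then absorbed into the third-derivative term. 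This second expansion over a neighborhood of size $O(D_n^2)$ is also what supplies the full $D_n^3$ factor---your single second-order remainder yields only $D_n^2$. You correctly anticipated that the ``bookkeeping in the first-order Taylor comparison'' would be the main obstacle; this missing double-leave-out is exactly where it bites.
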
 

For the $m$-dependent sequence, we can easily obtain the following result, whose proof will be put into Appendix \ref{appendix_a}. 
\begin{cor} \label{cor_boundmdependent} Assuming that $\{\mathbf{z}_i\}$ and $\{\mathbf{u}_i\}$ are $m$-dependent sequences, for some constant $C>0,$ we have 
\begin{align*} 
& \sup_x  \Big | \mathbb{E}[g_{\psi,x}(R^u)-g_{\psi,x}(R^z)] \Big | \\
 & \leq Cp \psi \Delta+C\mathbb{E}(1-\mathcal{I})+C \phi(M_x, M_y) \psi^2 p^4  +C\frac{(2M+1)^2}{\sqrt{n}}(\bar{m}^3_{x,3}+\bar{m}^3_{y,3})\psi^3 p^6. 
\end{align*}
\end{cor}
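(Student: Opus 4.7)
The plan is to derive Corollary \ref{cor_boundmdependent} by specializing the argument of Lemma \ref{lem_priorbound} to the $m$-dependent case. For a sequence $\{\mathbf{z}_i\}_{i=b+1}^n$ that is $m$-dependent with parameter $M$, the natural dependency graph $G_n$ has an edge between $i$ and $j$ only if $|i-j|\le M$, so the maximum degree satisfies $D_{\max,n}\le 2M$ and hence $D_n\le 2M+1$. A naive substitution of this bound into Lemma \ref{lem_priorbound} would produce the factor $(2M+1)^3$, so the first task is to save one power of $D_n$ and the second is to replace the ``max-then-expectation'' moments $m_{x,3},m_{y,3}$ by the tighter ``expectation-then-max'' moments $\bar m_{x,3},\bar m_{y,3}$.

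Both improvements should come from revisiting the Stein/third-order Taylor step in the proof of Lemma \ref{lem_priorbound} (the step that produces the term $CD_n^3\,(m_{x,3}^3+m_{y,3}^3)\psi^3 p^6 /\sqrt{n}$). That term arises by bounding a sum of the form
\begin{equation*}
\frac{1}{n^{3/2}}\sum_{k}\sum_{\alpha,\beta,\gamma=1}^{p}\sum_{s\in \widetilde N_k}\sum_{t\in \widetilde N_k\cup \widetilde N_s}\psi^{3}\,\bigl|\mathbb{E}\widetilde z_{k\alpha}\widetilde z_{s\beta}\widetilde z_{t\gamma}\bigr|,
\end{equation*}
where the innermost triple over $(k,s,t)$ contributes $nD_n^{2}$ and an extra factor $D_n$ arises from summing over the ``second-order'' neighborhood $\widetilde N_s$. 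For an $m$-dependent sequence, whenever $s\in \widetilde N_k$ the set $\widetilde N_k\cup \widetilde N_s$ is contained in a single interval of length at most $3M+1$, so its cardinality is bounded uniformly in $s$ by $O(M)$. This allows one level of neighborhood expansion to be absorbed by a block decomposition: partition $\{b+1,\dots,n\}$ into consecutive blocks of length $2M+1$ and carry out the Stein swap block by block. The Taylor remainder then aggregates to $O\bigl(n(2M+1)^{2}\bigr)$ triples, yielding the factor $(2M+1)^{2}$ in place of $D_n^{3}$.

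For the moment improvement, I would apply H\"older's inequality coordinate-wise inside each triple-product expectation: since each $\widetilde z_{\cdot l}$ has $\bar{\mathbb{E}}|\widetilde z_{\cdot l}|^{3}\le \bar m_{x,3}^{3}$, factorising $|\widetilde z_{k\alpha}\widetilde z_{s\beta}\widetilde z_{t\gamma}|$ and averaging over the block replaces the crude bound $\mathbb{E}\max_{l}|\widetilde z_{sl}|^{3}=m_{x,3}^{3}$ used in the general-graph argument by $\bar m_{x,3}^{3}$. The remaining contributions $p\psi\Delta$, $\mathbb{E}(1-\mathcal{I})$, and $\phi(M_x,M_y)\psi^{2}p^{4}$ are identical to those in Lemma \ref{lem_priorbound} and require no modification, because they come from Taylor terms of order at most two and from the truncation error, neither of which uses the cubic moment.

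The main obstacle is making the block decomposition and the coordinate-wise H\"older step compatible with each other: one has to carry out the Stein swap block by block without breaking the joint law of $\{\mathbf{z}_i\}$ inside a single block, and one must verify that the second-moment matching contribution still appears only as $\phi(M_x,M_y)\psi^{2}p^{4}$ and does not pick up an additional factor of $M$ from the block grouping. The key fact that makes both steps go through is that, under $m$-dependence, different blocks of length $2M+1$ are genuinely independent, so the only error incurred at each block swap is the intrinsic Taylor remainder within that block.
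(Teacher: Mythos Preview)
Your strategy departs from the paper's. The paper does not introduce a new block-swap argument; it keeps the same Stein interpolation $\Psi(t)=\mathbb{E}F(Z(t))$ and the same decomposition $I_1+I_2+I_3$ as in Lemma \ref{lem_priorbound}, and simply sharpens the third-order bounds by exploiting that for an $M$-dependent sequence the neighborhoods are intervals with $|\widetilde N_i|\le 2M+1$ and $|\mathcal N_i\cup\widetilde N_i|\le 4M+1$. Re-running the H\"older step with these cardinalities (as in \cite[Proposition 2.1]{ZC}) and with the coordinate-wise moments $\bar m_{x,3},\bar m_{y,3}$ in place of $m_{x,3},m_{y,3}$ already yields the factor $(2M+1)^2/\sqrt{n}$ for both $I_3$ and $I_{22}$; no new partitioning of the index set is needed.

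There is also a genuine gap in your block-swap plan. You assert that ``under $m$-dependence, different blocks of length $2M+1$ are genuinely independent'', but this is false for a \emph{consecutive} partition: the last element of one block and the first element of the next are at distance $1\le M$ and hence dependent. To make a block-Lindeberg argument work you would need a big-block/small-block construction with separating gaps of width at least $M+1$, and then you must also control the contribution of the small blocks and the truncation/second-order terms across the gaps. That is doable, and your arithmetic $\tfrac{n}{2M+1}\cdot\tfrac{(2M+1)^3}{n^{3/2}}=\tfrac{(2M+1)^2}{\sqrt n}$ shows the target rate would emerge once independence is restored, but as written the key independence claim fails. If you want to repair your route, insert the gaps and handle the boundary blocks; if you want the shortest fix, abandon the block decomposition and follow the paper's approach: redo the bounds (\ref{eq_bound31})--(\ref{eq_bound33}) with $|\widetilde N_i|\le 2M+1$, $|\mathcal N_i\setminus\widetilde N_i|\le 2M$, and with $\bar{\mathbb E}$ replacing $\mathbb E$ coordinate by coordinate.
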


As we can see from the above corollary, we need to control the first two items.  Denote $\varphi(M_x)$ be the smallest finite constant which satisfies that uniformly for $i$ and $j,$
\begin{equation*}
\mathbb{E}(A_{ij}-\check{A}_{ij})^2 \leq N \varphi^2(M_x), \  \mathbb{E}(B_{ij}-\check{B}_{ij})^2 \leq M \varphi^2(M_x),
\end{equation*}
where $\check{A}_{ij}, \check{B}_{ij}$ are the truncated blocked summations of $A_{ij}, B_{ij}$ and defined as 
\begin{equation*}
\check{A}_{ij}=\sum_{l=iN+(i-1)M-N+1}^{iN+(i-1)M}(\mathbf{z}_{lj} \wedge M_x) \vee (-M_x),
\end{equation*}
\begin{equation*}
\check{B}_{ij}=\sum_{l=i(N+M)-M+1}^{i(N+M)} (\mathbf{z}_{lj} \wedge M_x) \vee (-M_x).
\end{equation*}
Similarly, we can define $\varphi(M_y)$ for the Gaussian sequence $\{\mathbf{u}_i\}$. Set $\varphi(M_x, M_y)=\varphi(M_x) \vee \varphi(M_y).$ Furthermore, we let $u_x(\gamma)$ and $u_y(\gamma)$ be the smallest quantities such that 
\begin{equation*}
\mathbb{P} \left( \max_{b+1 \leq i \leq n} \max_{1 \leq j \leq p}|\mathbf{z}_{ij}| \leq u_x(\gamma)  \right) \geq 1-\gamma,
\end{equation*}
\begin{equation*}
\mathbb{P} \left( \max_{b+1 \leq i \leq n} \max_{1 \leq j \leq p}|\mathbf{u}_{ij}| \leq u_y(\gamma) \right) \geq 1-\gamma.
\end{equation*}
Then we have the following control on the first two items of Corollary \ref{cor_boundmdependent}, whose proof can be found in Appendix \ref{appendix_a}. Its analog is \cite[Proposition 4.1]{ZC}.
\begin{lem} \label{lem_boundmdependenttwo} 
Assuming that $M_x>u_x(\gamma)$ and $M_y>u_y(\gamma)$ for some constant $C>0$, $\gamma \in (0,1),$    we have  
\begin{equation*} 
p\psi\Delta+ \mathbb{E}[1-\mathcal{I}] \leq C\left(p \psi \varphi(M_x, M_y) \sqrt{\log (p/\gamma)}+\gamma \right).  
\end{equation*} 
\end{lem}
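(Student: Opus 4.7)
The strategy is to choose $\Delta$ proportional to $\varphi(M_x,M_y)\sqrt{\log(p/\gamma)}$ so that the deterministic term $p\psi\Delta$ matches the target bound, after which it suffices to show $\mathbb{E}[1-\mathcal{I}]\le C\gamma$ for the same $\Delta$. I will work out the bound on $\max_{1\le j\le p}|\mathbf{Z}_j-\widetilde{\mathbf{Z}}_j|$; the Gaussian side involving $\max_j|\mathbf{U}_j-\widetilde{\mathbf{U}}_j|$ is treated in parallel and is actually easier because $\{\mathbf{u}_i\}$ is an $M$-dependent centered Gaussian sequence, so its truncation analysis reduces to a direct Gaussian maximal inequality.

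The first step is to rewrite the difference as a centered partial sum. Since $\mathbb{E}\mathbf{z}_{kj}=0$ (the factor $\epsilon_k$ in the definition of $z_{kj}$ is uncorrelated with the past regressor $x_{k-\bar s-1}$), setting $R_{kj}:=\mathbf{z}_{kj}-(\mathbf{z}_{kj}\wedge M_x)\vee(-M_x)$ gives $\mathbf{z}_{kj}-\widetilde{\mathbf{z}}_{kj}=R_{kj}-\mathbb{E}R_{kj}$, so $\sqrt n(\mathbf{Z}_j-\widetilde{\mathbf{Z}}_j)=\sum_{k=b+1}^n(R_{kj}-\mathbb{E}R_{kj})$ is a mean-zero, $M$-dependent partial sum. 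I then invoke the big-block/small-block decomposition already set up in the statement: partition $\{b+1,\ldots,n\}$ into alternating blocks of lengths $N$ and $M$, producing block sums $A_{ij}-\check A_{ij}$ and $B_{ij}-\check B_{ij}$ of the truncation residuals. By $M$-dependence the big-block contributions $\{A_{ij}-\check A_{ij}\}_i$ are mutually independent across $i$ (and similarly for the small blocks), with per-block $L^2$-bounds $N\varphi^2(M_x)$ and $M\varphi^2(M_x)$ respectively by hypothesis, so the aggregated variance is of order $n\varphi^2(M_x,M_y)$.

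With independence in hand, I apply a Bernstein-type inequality. To get the boundedness needed for Bernstein, I restrict to the event $\mathcal{E}=\{\max_{k,j}|\mathbf{z}_{kj}|\le M_x,\ \max_{k,j}|\mathbf{u}_{kj}|\le M_y\}$, which by the hypotheses $M_x>u_x(\gamma)$ and $M_y>u_y(\gamma)$ and a union bound has probability at least $1-2\gamma$; on $\mathcal{E}$ each summand is bounded by $M_x$ and each block sum by $NM_x$. Setting $t=\Delta=C\varphi(M_x,M_y)\sqrt{\log(p/\gamma)}$ with $C$ a sufficiently large absolute constant places the Bernstein tail in its sub-Gaussian regime, yielding $\mathbb{P}(|\mathbf{Z}_j-\widetilde{\mathbf{Z}}_j|\ge \Delta)\le 2\exp(-c\log(p/\gamma))+2\gamma$ uniformly in $j$. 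A union bound over $j\in\{1,\ldots,p\}$ and across the $\mathbf{z}/\mathbf{u}$ sides then gives $\mathbb{E}[1-\mathcal{I}]\le C\gamma$, which combined with the choice of $\Delta$ proves the lemma.

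The main obstacle is calibrating the block size $N$ so that the Bernstein inequality genuinely lives in its sub-Gaussian regime: concretely, the variance contribution $n\varphi^2(M_x)$ must dominate the boundedness-induced term $NM_x\,\Delta\sqrt n$ in the Bernstein exponent, while $N$ must simultaneously remain small enough relative to $n/M$ that sufficiently many independent big blocks are available. This balance is the direct analogue of the tuning in \cite[Proposition 4.1]{ZC}; here $M$ has already been fixed upstream by the physical-dependence approximation of Lemma \ref{lem_m_rough}, so $N$ is effectively the only free parameter and can be chosen to make the Bernstein argument go through.
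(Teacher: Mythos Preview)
Your high-level plan—pick $\Delta = C\varphi(M_x,M_y)\sqrt{\log(p/\gamma)}$ and then show $\mathbb{E}[1-\mathcal{I}]\le C\gamma$—matches the paper exactly. The paper itself gives almost no detail: it first checks, via Assumption~\ref{assu_basisregularity}, that the coordinate variances $\sigma_{jj}=\operatorname{Cov}(\mathbf{Z}_j,\mathbf{Z}_j)$ are uniformly bounded above and below (a hypothesis you omit but should mention), and then simply invokes \cite[Corollary~2.1 and Proposition~4.1]{ZC} to get $\sup_j|\mathbf{Z}_j-\widetilde{\mathbf{Z}}_j|\le C\varphi(M_x)\sqrt{8\log(p/\gamma)}$ with probability at least $1-\gamma$, after which the choice of $\Delta$ is immediate.

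Your attempt to reconstruct that cited argument has a genuine gap in the Bernstein step. You write $\sqrt n(\mathbf{Z}_j-\widetilde{\mathbf{Z}}_j)=\sum_k(R_{kj}-\mathbb{E}R_{kj})$ with $R_{kj}=\mathbf{z}_{kj}-(\mathbf{z}_{kj}\wedge M_x)\vee(-M_x)$, and then restrict to $\mathcal{E}=\{\max_{k,j}|\mathbf{z}_{kj}|\le M_x\}$ to obtain boundedness. But on $\mathcal{E}$ every $R_{kj}$ is \emph{identically zero}: no truncation occurs. Hence on $\mathcal{E}$ the entire random part vanishes and
\[
\mathbf{Z}_j-\widetilde{\mathbf{Z}}_j \;=\; -\,n^{-1/2}\sum_k\mathbb{E}R_{kj}
\]
is a constant. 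There is nothing left for Bernstein to concentrate, so the claimed tail $2\exp(-c\log(p/\gamma))$ cannot come from this route; your ``summand bounded by $M_x$, block sum by $NM_x$'' is a bound on $\mathbf{z}_{kj}$, not on the truncation residual you are actually summing.

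What is really required is to bound the deterministic centering $\max_j\bigl|n^{-1/2}\sum_k\mathbb{E}R_{kj}\bigr|$ directly—this is where the block definitions $\mathbb{E}(A_{ij}-\check A_{ij})^2\le N\varphi^2(M_x)$ enter, via Cauchy--Schwarz on each block mean followed by aggregation—and then combine with $\mathbb{P}(\mathcal{E}^c)\le\gamma$. The block-size calibration you worry about in your last paragraph is the right concern, but it governs this deterministic estimate rather than a Bernstein exponent.
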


In a final step, we will control $\rho$ defined in (\ref{eq_rhoij}) by quantifying the parameters in the above bounds.
Recall (\ref{M_dependence_const}), by (\ref{lem_m_rough_eq}), Corollary \ref{cor_boundmdependent} and Lemma \ref{lem_boundmdependenttwo},  for some constant $C>0,$ we have 
\begin{align*}
& \left|\mathbb{E}g_{\psi,x}(R^z)-\mathbb{E}g_{\psi,x}(R^{u})\right| \\
& \leq C\Big( p \psi \Big)^{\frac{q}{q+1}} \Big( \sum_{k=1}^p \Theta_{M,k,q}^q \Big)^{\frac{1}{q+1}}+C \phi(M_x, M_y) \psi^2 p^4  +C\frac{(2M+1)^2}{\sqrt{n}}(\bar{m}^3_{x,3}+\bar{m}^3_{y,3})\psi^3 p^6 \\
&+C\left(p \psi \varphi(M_x, M_y) \sqrt{\log (p/\gamma)}+\gamma \right).  
\end{align*}
By (\ref{eq_phytheta}) and  the proof of \cite[Theorem 2.1]{ZC}, we conclude that 
\begin{equation*}
\varphi(M_x)=C(\xi_c^{1/2}/M_x^{5/6}+\sqrt{N}/M_x^3), \ \varphi(M_y)=C\xi_c/M_y^2, 
\end{equation*}
\begin{equation*}
\phi(M_x, M_y)=C\xi_c(1/M_x+1/M_y).
\end{equation*}
By Assumption \ref{assu_parameter}, we should choose that $M_x, M_y \gg \xi_c.$ 
Furthermore, following the arguments of the proofs of \cite[Theorem 2.1]{ZC} and the orthogonality of the basis functions, we find that 
\begin{equation*}
 \bar{m}^3_{x,3}+\bar{m}^3_{y,3}<\infty.
\end{equation*}
Therefore, we can also ignore all the constants. By (\ref{eq_phytheta}) and Assumption \ref{assu_phylip}, we have that 
\begin{equation*}
\Big( \sum_{k=1}^p \Theta^q_{M,k,q} \Big)^{\frac{1}{q+1}} \leq C\xi_c p^{\frac{1}{q+1}}(M^{-\tau+1})^{\frac{q}{q+1}}.
\end{equation*}
We impose the condition that $M_x=M_y$ and $N=M$. This concludes our proof.  
\end{proof}

\begin{proof}[Proof of Theorem \ref{prop_quadist}] We first observe that $\Delta$ is the limiting covariance of $\frac{Y^* \bm{\epsilon}}{\sqrt{n}}.$ Denote the eigenvalues of $\Delta^{1/2} \Sigma^{-2} \Delta^{1/2}$ as $d_1 \geq d_2 \geq \cdots \geq d_{p} \geq 0.$ Using Theorem \ref{thm_gaussian} with $E=\Sigma^{-2}$ and Lindeberg's central limit theorem, we conclude that 
\begin{equation*}
\frac{nT_1^*-\sum_{i=1}^{p} d_i}{\sqrt{\sum_{i=1}^{p} d_i^2}} \Rightarrow \mathcal{N}(0,2),
\end{equation*}     
provided that the following equation holds true 
\begin{equation}\label{eq_cltcondition}
\frac{d_1}{\sqrt{\sum_{i=1}^{p} d_i^2}} \rightarrow 0.
\end{equation}
Therefore, the rest of the proof is devoted to analyzing the spectrum of $\Delta^{1/2} \Sigma^{-2} \Delta^{1/2}.$  Using a similar discussion to the proof of Theorem \ref{thm_timevaryingcoeff} (i.e. equations (\ref{eq_lawoflargenumberwithtime}) and (\ref{eq_sigma})), we can show that $\lambda_{\min}(\Delta)=O(1)$ and conclude that   $\lambda_{\max}(\Delta)=O(1)$ using Lemma \ref{lem_epsilon} and \ref{lem_disc}. Therefore, we conclude that $d_p=O(1)$ and $d_1=O(1)$ using the fact that $\lambda_{\min}(A) \lambda_{\min}(B) \leq \lambda_{\min}(AB) \leq \lambda_{\max}(AB) \leq \lambda_{\max}(A) \lambda_{\max}(B)$ for any positive definite matrices $A,B.$ Hence, (\ref{eq_cltcondition}) holds true immediately. Similarly, denote the eigenvalues of $\Delta^{1/2} \Sigma^{-1} A^* A \Sigma^{-1} \Delta^{1/2}$ as $\rho_1 \geq \rho_2 \geq \rho_{(b-k_0)c}.$ Similarly, it  can be shown that $\rho_k=O(1), k=1,2,\cdots, (b-k_0)c.$ It is notable that in this case we use Theorem \ref{thm_gaussian} by setting $E=\Sigma^{-1}A^* A \Sigma^{-1}.$

\end{proof}

\begin{proof}[Proof of Proposition \ref{pro_power}]  By the smoothness of $\gamma(t,j)$ and Lemma \ref{lem_phy}, we find that $\mathbf{H}_a$ is equivalent to 
\begin{equation*}
\mathbf{H}_a^{\prime}: \ \frac{n\sum_{j=1}^{b} \int_0^1 \gamma^2(t,j)dt}{\sqrt{bc}} \rightarrow \infty,
\end{equation*} 
 Using (\ref{defn_bestcoeff}), Assumption \ref{assu_phylip} and Lemma \ref{lem_borderapproximation} and \ref{lem_approxphi}, we have 
\begin{equation*}
\frac{n\sum_{j=1}^{b} \int_0^1 \gamma^2(t,j)dt}{\sqrt{bc}} \rightarrow \infty \Leftrightarrow \frac{n\sum_{j=1}^{b} \int_0^1 \phi_j^2(t)dt}{\sqrt{bc}} \rightarrow \infty.
\end{equation*}
As a consequence, we will consider the following alternative
\begin{equation*}
\mathbf{H}_a^{*}: \ \frac{n\sum_{j=1}^{b} \int_0^1 \phi_j^2(t)dt}{\sqrt{bc}} \rightarrow \infty.
\end{equation*}
%
%
%
%
We first observe that 
\begin{align*}
n T_1^*&=n \sum_{j=1}^b \int_0^1 \Big(\phi_j(t)-\widehat{\phi}_j(t) \Big)^2dt+n \sum_{j=1}^b \int_0^1 \phi_j^2(t)dt \\
&+2n \sum_{j=1}^b \int_0^1 \Big( \widehat{\phi}_j(t)-\phi_j(t) \Big) \phi_j(t) dt.
\end{align*}
By Theorem \ref{prop_quadist}, we have that 
\begin{equation*}
\frac{nT_1^*-f_1-n\sum_{j=1}^b \int_0^1 \phi^2_j(t)dt}{f_2}-\frac{2n\sum_{j=1}^b \int_0^1 \Big( \widehat{\phi}_j(t)-\phi_j(t) \Big) \phi_j(t) dt }{f_2} \Rightarrow \mathcal{N}(0,2).
\end{equation*} 
One one hand, we have that  
\begin{equation*}
\sum_{j=1}^b \int_0^1 \Big( \widehat{\phi}_j(t)-\phi_j(t)\Big) \phi_j(t) dt =o\Big(\frac{\sqrt{bc}}{n}\Big) \ \text{in probability}.
\end{equation*}
This is because by Lemma \ref{lem_sievebasisbound}, we can write 
\begin{align*}
\sum_{j=1}^b \int_0^1 \Big( \widehat{\phi}_j(t)-\phi_j(t)\Big) \phi_j(t) dt = \sum_{j=1}^b \sum_{k=1}^c (\widehat{a}_{jk}-a_{jk})a_{jk}=\bm{\beta}^*(\widehat{\bm{\beta}}-\bm{\beta})+O(c^{-d}). 
\end{align*}
Recall (\ref{beta_est}), from the proof of Theorem \ref{thm_timevaryingcoeff} (for instance see (\ref{eq_ysbound})), we find that 
\begin{equation*}
\Big|\bm{\beta}^* (\widehat{\bm{\beta}}-\bm{\beta}) \Big|=O_{\mathbb{P}}\Big( || \bm{\beta} || \sqrt{\frac{\log n}{n}}   \Big).
\end{equation*}  
Hence, under $\mathbf{H}_a, $ we have 
\begin{equation*}
\sum_{j=1}^b \int_0^1 \Big( \widehat{\phi}_j(t)-\phi_j(t)\Big) \phi_j(t) dt=O_{\mathbb{P}} \Big( \sqrt{\log n} \frac{(bc)^{1/4}}{n} \Big).
\end{equation*}
On the other hand, as $\mathbf{H}_a$ is equivalent to $\mathbf{H}_a^*,$ we can conclude our proof using Assumption \ref{assu_parameter}. 
\end{proof}

\begin{proof}[Proof of Theorem \ref{thm_bootstrap}] We only discuss the case when $j=0,$ the other cases can be proved similarly. Denote $R_i:=\widehat{\Lambda}(t_i,0)-\Lambda(t_i,0),$ where $t_i=\frac{i+b}{n}.$ We will focus on the case when $i=1$ and analyze the first entry of $(R_i)_{11}.$ The other cases can be analyzed similarly. By definition, we have that
\begin{align*}
(R_1)_{11}=\frac{1}{nh} \sum_{k=b+1}^n K\left(\frac{k/n-t_1}{h}\right)(x_{k-1} \epsilon_k)^2-\mathbb{E}(\mathbf{U}_1^2(t_1,\mathcal{F}_0)),
\end{align*}
where $\mathbf{U}_1$ is the first entry of $\mathbf{U}(t_1, \mathcal{F}_0).$ On one hand, using Lemma \ref{lem_con}, we have that
\begin{align*}
\left| \left| \frac{1}{nh}\sum_{k=b+1}^n K\left(\frac{k/n-t_1}{h}\right)\Big [(x_{k-1} \epsilon_k)^2-\mathbb{E}(x_{k-1} \epsilon_k)^2 \Big] \right| \right| \leq C(nh)^{-1/2},
\end{align*}
where we recall that $K$ is supported on $[-1,1].$ On the other hand, using the stochastic Lipschitz continuity and the elementary property of kernel estimation, we conclude that 
\begin{align*}
\left| \left| \frac{1}{nh}\sum_{k=b+1}^n K\left(\frac{k/n-t_1}{h}\right)\mathbb{E}(x_{k-1} \epsilon_k)^2-\mathbb{E}(\mathbf{U}_1^2(t_1, \mathcal{F}_0))  \right| \right| \leq C\Big( (nh)^{-1/2}+ h^2 +n^{-1} \Big ).
\end{align*}
This implies that 
\begin{equation*}
||(R_1)_{11}|| \leq C \Big( \frac{1}{\sqrt{nh}}+h^2+n^{-1} \Big).
\end{equation*}
The other entries can be discussed in the same way. As a consequence, using Lemma \ref{lem_disc}, we conclude that
\begin{equation*}
||R_1 || \leq Cb  \Big( \frac{1}{\sqrt{nh}}+h^2+n^{-1} \Big).
\end{equation*} 
Hence, we can conclude the proof of (\ref{eq_boundlamda}). The proof of (\ref{eq_sigmaest}) follows from  (\ref{eq_boundlamda}) and Lemma \ref{lem_blockge}.
\end{proof}

\begin{proof}[Proof of Theorem \ref{thm_bootstrap1}] Following the proof of Theorem \ref{thm_bootstrap}, we consider the case when $i=1.$ Notice that
\begin{equation*}
\left(\widetilde{\Lambda}(t_1, j)-\widehat{\Lambda}(t_1,j) \right)_{11}=O \left(\frac{1}{nh} \sum_{k=b+1}^n K\left(\frac{k/n-t_1}{h}\right)x^2_{k-1} \epsilon_k \Big[ \epsilon_k-\widehat{\epsilon}_k \Big] \right).
\end{equation*}
By Lemma \ref{lem_approxphi} and Theorem \ref{thm_timevaryingcoeff}, we conclude that 
\begin{equation*}
\left|\left(\widetilde{\Lambda}(t_1, j)-\widehat{\Lambda}(t_1,j) \right)_{11} \right| = O_{\mathbb{P}} \left( \sqrt{\frac{b}{nh}} \Big( \zeta_c \sqrt{\frac{\log n}{n}}+n^{-d \alpha_1} \Big) \right). 
\end{equation*}
This finishes our proof. 
\end{proof}

\section{Additional Proofs}\label{appendix_a} This appendix is devoted to providing the additional technical proofs of the lemmas and theorems of this paper. 

\begin{proof}[Proof of Lemma \ref{lem_phy}] Denote $\mathcal{P}_j(\cdot)=\mathbb{E}(\cdot \vert \mathcal{F}_j)-\mathbb{E}(\cdot \vert \mathcal{F}_{j-1}),$ we can write
\begin{equation} \label{project_repres}
x_i=\sum_{k=-\infty}^i\mathcal{P}_k(x_i).
\end{equation}
Denote $t_i=\frac{i}{n},$ with the convenience of (\ref{project_repres}), we have 
\begin{equation}\label{controlgamma}
|\gamma(t_i,j)|  \leq \sum_{k=-\infty}^{\infty} \delta(i-k,2)\delta(i+j-k,2),
\end{equation}
where we use \cite[Theorem 1]{WW} (also see the proof of \cite[Proposition 4]{ZZ2}).  Therefore, there exists a constant $C>0,$ such that $|\gamma(t_i, j)| \leq C j^{-\tau}$ by the  assumption of (\ref{assum_phy}). This concludes our proof. 
\end{proof}
\begin{proof}[Proof of Proposition \ref{lem_borderapproximation}] We start with the proof of (\ref{eq_phijbound}). For $i>b^2,$ denote the $(i-1) \times (i-1)$ symmetric banded matrix $\Gamma_i^T$  by
\begin{equation*}
(\Gamma_i^T)_{kl}=
\begin{cases} 
(\Gamma_i)_{kl}, & |k-l| \leq b^2; \\
0, & \text{otherwise}.
\end{cases}
\end{equation*}
where $\Gamma_i=\Omega_i^{-1}$ is the covariance matrix of $\mathbf{x}_{i-1}^i.$
Using Assumption \ref{assu_phylip} and a simple extension of \cite[Proposition 5.1.1]{BD}, we find that $\lambda_{\min}(\Gamma_i)>C,$ for some constant $C>0.$ Therefore, by Weyl's inequality and Lemma \ref{lem_phy}, we have $\lambda_{\min}(\Gamma_i^T) \geq C-n^{-4+\frac{4}{\tau}}.$ A direct application of Cauchy-Schwarz inequality yields that 
\begin{equation}\label{eq_bandedapprox}
\left| \left| \Omega_i\bm{\gamma}_i-\Omega_i^T \bm{\gamma}_i \right| \right| \leq n^{-4+5/\tau}.
\end{equation}
When $n$ is large enough, $\Gamma_i^T$ is a $b^2$-banded positive definite bounded matrix, then by Lemma \ref{lem_band}, we conclude that for some $\kappa \in (0,1)$ and some constant $C>0,$ we have 
\begin{equation}\label{eq_bandin}
\left| (\Omega_i^T)_{kl}\right| \leq C \kappa^{2|k-l|/b^2}.
\end{equation}  
Therefore, by (\ref{eq_bandedapprox}), (\ref{eq_bandin}) and Lemma \ref{lem_phy}, we conclude our proof when $i \geq b^2$. Similarly we can prove the case when $b<i \leq b^2.$ The second part is due to the Yule-Walker's equation and  Lemma \ref{lem_numerical}. 

\end{proof}

\begin{proof}[Proof of Lemma \ref{lem_approxphi}] For any fixed $i$ and $j=1,2,\cdots,b,$ we have 
\begin{equation}\label{yulewalkerdecom}
\phi_{ij}-\phi_j(\frac{i}{n})= \mathbf{e}_j^* \Omega_i^b\left(\gamma_i^b-\widetilde{\gamma}^b_i\right)+\mathbf{e}_j^*\Omega_i^b\left( \widetilde{\Gamma}^b_i-\Gamma^b_i \right) \widetilde{\Omega}_i^b\widetilde{\gamma}_i^b,
\end{equation}
where $\Gamma_i^b, \widetilde{\Gamma}_i^b$ are the covariance matrices of $\mathbf{x}_i$ and $\widetilde{\mathbf{x}}_i$ respectively. For the first item of the right-hand side of (\ref{yulewalkerdecom}), using Cauchy-Schwarz inequality, we have  
\begin{align*}
\left| \mathbf{e}_j^* \Omega_i^b(\gamma_i^b-\widetilde{\gamma}_i^b) \right|^2 \leq \lambda_{\max}((\Omega_i^b)^*\Omega_i^b)||\gamma^b_i-\widetilde{\gamma}^b_i ||_2^2.
\end{align*}
On one hand, by Lemma \ref{lem_phy}, it is easy to check that  
$ \lambda_{\max}((\Omega_i^b)^*\Omega_i^b)=1/\lambda_{\min}((\Gamma_i^b)*\Gamma^b_i)>\kappa,$ where $\kappa>0$ is some constant; on the other hand, similar to (\ref{controlgamma}), we have 
\begin{equation*}
||\gamma^b_i-\widetilde{\gamma}^b_i ||_2^2=\sum_{k=1}^{b}(\gamma_i(k)-\widetilde{\gamma}_i(k))^2 \leq n^{-2+3/\tau},
\end{equation*} 
where we use the assumption of (\ref{assum_lip}). For the second item,  a similar discussion yields that, for some constant $C>0,$
\begin{equation*}
\left| \mathbf{e}_j^*\Gamma_i^{-1}\left( \widetilde{\Gamma}^b_i-\Gamma^b_i \right) \widetilde{\Gamma}_i^{-1}\widetilde{\gamma}_i \right|^2 \leq C \lambda_{\max}\left(\left( \widetilde{\Gamma}^b_i-\Gamma^b_i \right)^* \left( \widetilde{\Gamma}^b_i-\Gamma^b_i \right) \right) \leq Cn^{-2+4/\tau}.
\end{equation*}
where we use Lemma \ref{lem_disc}.  Hence, the proof follows from Proposition \ref{lem_borderapproximation}.
\end{proof}
\begin{proof}[Proof of Lemma \ref{lem_epsilon}]  Using (\ref{estimationeq}), we have 
\begin{equation*}
\sup_i \sigma_i^2=\lambda_{\max} \left( \Phi \Gamma \Phi^* \right) \leq \lambda_{\max}(\Phi \Phi^*) \lambda_{\max} (\Gamma)<\infty,
\end{equation*} 
where we use Lemma \ref{lem_phy} and Proposition \ref{lem_borderapproximation}, and Lemma \ref{lem_disc}. Then the proof follows from (\ref{eq_reduceepsilon}) . For the control of physical dependence measure, for some constant $C>0,$ we have 
\begin{align} \label{eq_bound1}
\Big| \Big|G(\frac{i}{n}, \mathcal{F}_i)-& G(\frac{i}{n}, \mathcal{F}_{i,j})-\sum_{k=1}^j \phi_{ik}(G(\frac{i-k}{n},\mathcal{F}_{i-k})-G(\frac{i-k}{n}, \mathcal{F}_{i-k,j-k})) \Big |\Big|_q \nonumber \\  
& \leq Cj^{-\tau}+ \sum_{k=1}^{j-1} \phi_{ik} (j-k)^{-\tau}. 
\end{align}
By Proposition \ref{lem_borderapproximation}, $j \leq b$ and 
\begin{equation*}
\sum_{k=1}^{j-1} ((j-k))^{-\tau} \sim 2\int_1^{j/2} ((j-x))^{-\tau} dx,
\end{equation*}  
we can conclude our proof.
\end{proof}

\begin{proof}[Proof of Lemma \ref{lem_smoothphijt}] Recall we use $\widetilde{\Gamma}^b_i$ to stand for the covariance matrix of $\widetilde{\mathbf{x}}_i.$ 
By definition, we have 
$\widetilde{\Gamma}_i^b \bm{\phi}^b(t)=\widetilde{\bm{\gamma}}_i^b.$ Using Cramer's rule, we have 
\begin{equation*}
\phi_j(t)=\frac{\det (\widetilde{\Gamma}_i^b)_j}{\det \widetilde{\Gamma}^b_i}, \ j=1,2,\cdots, b,
\end{equation*}
where $(\widetilde{\Gamma}_i^b)_j$ is the matrix formed by replacing the $j$-th column of $\widetilde{\Gamma}^b_i$ by the column vector $\widetilde{\bm{\gamma}}_i^b$. As $\widetilde{\Gamma}_i^b$ is non-singular, it suffices to show that $\det (\widetilde{\Gamma}_i^b)_j, \det \widetilde{\Gamma}^b_i$ are $C^d$ functions of $t$ on $[0,1].$  We employ the definition of determinant, where 
\begin{equation*}
\det \widetilde{\Gamma}^b_i=\sum_{\sigma \in S_b} \left(\operatorname{sign}(\sigma) \prod_{k=1}^b (\widetilde{\Gamma}_i^b)_{k, \sigma(k)}  \right), 
\end{equation*}
where $S_b$ is the permutation group. Similarly for $(\widetilde{\Gamma}_i)_j.$ Due to Lemma \ref{lem_borderapproximation}, the possible maximal item in the expansion of $\det \widetilde{\Gamma}_i^b $ is the non-zero item $\Delta=\prod_{k=1}^b (\widetilde{\Gamma}_i^b)_{k, k}.$ Therefore, it suffices to show that $\det (\widetilde{\Gamma}_i^b)_j/\Delta, \ \det \widetilde{\Gamma}_i^b /\Delta$ are in $C^d.$ Now we reorder the items to write 
\begin{equation*}
\det \widetilde{\Gamma}_i^b=\sum_{j=1}^{b !} \omega_j \gamma_j, 
\end{equation*}
where $\gamma_j$ is some entry of $\widetilde{\Gamma}_i$ and $w_j$ contains the items $\operatorname{sign}(\sigma)$ and $\prod_{k=1}^{b-1} (\widetilde{\Gamma}_i^b)_{k, \sigma(k)}.$ It is easy to check that $\sum_{j=1}^{b!} |\omega_j|<\infty,$ we can therefore conclude our proof using Lemma \ref{lem_phy}.
\end{proof}

\begin{proof}[Proof of Lemma \ref{lem_checkinginvert}]  
Since $\mathbf{b}(t) \mathbf{b}^*(t)$ is a rank-one matrix with positive nontrivial largest eigenvalue and $\int_0^1 \mathbf{b}(t) \mathbf{b}^*(t) dt=I,$ Assumption \ref{assu_basisregularity} will be satisfied if $\Sigma^k(t)$ is positive definite for all $t \in [0,1]. $ We consider the case when $b>q$ and the other case can be proved similarly (actually easier).  

By the definition of $\Sigma^k(t),$ we find that the diagonal entries are $1+\sum_{j=1}^q a_j^2(t).$ The off-diagonal entries can be computed easily. Therefore, by Lemma \ref{lem_disc}, when (\ref{eq_mainvert}) holds true, $\Sigma^k(t), k=1,2,\cdots,b$ will be strongly diagonally dominant and hence positive definite for all $t \in [0,1]$. This concludes our proof for $b \leq q$.  Similar discussion holds for $b <q.$ 

%

\end{proof}

\begin{proof}[Proof of Lemma \ref{lem_coeffsmalli}] For the proof that $f$ is smooth, it is similar to that of Lemma \ref{lem_smoothphijt}, we omit the detail here. And for the  proof of (\ref{eq_phiapproileqb}), we have 
\begin{equation*}
\widetilde{\phi}_{ij}-f_j(\frac{i}{n})= \mathbf{e}_j^* \Omega_i^i\left(\bm{\gamma}^i_i-\widetilde{\gamma}^i_i\right)+\mathbf{e}_j^*(\Gamma_i^i)^{-1}\left( \widetilde{\Gamma}^i_i-\Gamma^i_i \right) \widetilde{\Omega}_i^i\widetilde{\bm{\gamma}}_i^i.
\end{equation*}
Then the rest of the proof is similar to that of Lemma \ref{lem_approxphi},  where we use Lemma \ref{lem_sievebasisbound}.
\end{proof}
\begin{proof}[Proof of Lemma \ref{lem_sigmaigeqb}] First of all, for some constant $C>0,$ we have that 
\begin{equation*}
\left| \sigma_i^2-(\sigma_i^b)^2 \right| \leq Cn^{-2+\frac{1}{\tau}}, 
\end{equation*}
where we use Assumption \ref{assu_phylip} and Lemma \ref{lem_borderapproximation}.  Similarly, by (\ref{assum_lip}), we can show that 
\begin{equation*}
\left| (\sigma_i^b)^2-g(\frac{i}{n}) \right| \leq Cn^{-1+4/\tau}.
\end{equation*}
$g(t) \in C^d([0,1])$ is due to Assumption \ref{assu_smmothness} and the fact that $\phi_{ij}$ is absolutely summable. 
\end{proof}


%

\begin{proof}[Proof of Lemma \ref{lem_longrundiag}]  Similar to the proof of  Lemma \ref{lem_approxphi}, we have 
\begin{equation*}
\left| \left| \Lambda(\frac{k}{n})-\Lambda_{kk} \right| \right|_{\infty} \leq n^{-1+2/\tau}.
\end{equation*}
Hence, the proof follows from Lemma \ref{lem_disc}. 

\end{proof}

\begin{proof}[Proof of Lemma \ref{lem_m_rough}]
By \cite[Lemma A.1]{LL} (see the discussion below equation (14) of \cite{ZC}), we find that 
\begin{equation*}
\left( \mathbb{E}|\mathbf{Z}_k-\mathbf{Z}_k^M|^q \right)^{2/q} \leq C \Theta^{2}_{M,k,q},
\end{equation*}
where  $C$ is some positive constant. As a consequence, we have 
\begin{align}\label{projection_control}
\mathbb{E}[1-\mathcal{I}_M] & \leq \sum_{j=1}^p \mathbb{P}(|\mathbf{Z}_j-\mathbf{Z}_j^M| \geq \Delta_M) \leq  \frac{1}{\Delta_M^q} \sum_{j=1}^p \mathbb{E}|\mathbf{Z}_j-\mathbf{Z}_j^M|^q  \nonumber \\
& \leq  \frac{C}{\Delta_M^q} \sum_{j=1}^p \Theta^q_{M,j,q}.
\end{align}
Optimizing the bound with respect to $\Delta_M,$  we finish our proof. 
\end{proof}

\begin{proof}[Proof of Lemma \ref{lem_controlderivative}] 
By a direct computation, we have 
\begin{equation}\label{eq_partialf}
\partial_jf(z)=2 \sum_{i=1}^{p}e_{ij} z_i, \ \partial^2_{jk}f(z)=2, \ \partial^3_{jkl}f(z)=0.
\end{equation}
Using (\ref{gderivativebound}) and chain rule, there exists some constant $C>0,$  we have
\begin{equation*}
|\partial_{jk}^2 F(z)| \leq C(\psi^2 |\partial_j f \partial_k f| +\psi |\partial^2_{jk}f|),
\end{equation*} 
\begin{equation*}
|\partial^3_{jkl}F(z)| \leq C[\psi^3 |\partial_j f  \partial_k f  \partial_l f|+\psi^2(|\partial^2_{jk}f \partial_lf|+|\partial^2_{jl}f \partial_kf|+|\partial^2_{kl}f \partial_jf|)].
\end{equation*}
We then conclude our proof using (\ref{eq_partialf}).

\end{proof}
\begin{proof}[Proof of Lemma \ref{lem_priorbound}] 
Similar to (\ref{mapprox_tri}), for some constant $C>0,$ we have 
\begin{equation*}
\sup_x \left| \mathbb{E}[g_{\psi,x}(R^z)-g_{\psi,x}(\widetilde{R}^z)] \right| \leq \psi p \Delta+C \mathbb{E}(1-\mathcal{I}),
\end{equation*}
Hence, it suffices to control $\sup_x \left| \mathbb{E}\left[g_{\psi,x}(\widetilde{R}^{z})-g_{\psi,x}(\widetilde{R}^{u})\right]\right|.$  Denote 
\begin{equation*}
\Psi(t)=\mathbb{E}F(Z(t)), \ Z(t)=\sum_{k=b+1}^n Z_i(t),
\end{equation*}
where $F$ is defined in (\ref{eq_F}) and  
\begin{equation*}
Z_i(t)=\frac{\sqrt{t}\widetilde{\mathbf{z}}_i+\sqrt{1-t}\widetilde{\mathbf{u}}_i}{\sqrt{n}}.
\end{equation*}
Similar to the discussion of equations (25) and (26) in \cite{ZC}, we have 
\begin{align*}
\mathbb{E}\left[g_{\psi,x}(\widetilde{R}^{z})-g_{\psi,x}(\widetilde{R}^{u})\right]=\int_0^1 \Psi^{\prime}(t)dt&=\frac{1}{2}\sum_{i=b+1}^n \sum_{j=1}^{p} \int_0^1 \mathbb{E}[\partial_jF(Z(t))\dot{Z}_{ij}(t)]dt \nonumber \\
& =\frac{1}{2}(I_1+I_2+I_3),
\end{align*}
where $ \dot{Z}_{ij}(t)=\frac{t^{-1/2}\widetilde{\mathbf{z}}_{ij}-(1-t)^{-1/2}\widetilde{\mathbf{u}}_{ij}}{\sqrt{n}}$ and $I_k, k=1,2,3$ are defined as 
\begin{equation*}
I_1=\sum_{i=b+1}^n \sum_{j=1}^{p} \int_0^1 \mathbb{E}[\partial_j F(Z^{(i)}(t))\dot{Z}_{ij}(t)]dt, 
\end{equation*}
\begin{equation*}
I_2=\sum_{i=b+1}^n \sum_{k,j=1}^{p} \int_0^1 \mathbb{E}[\partial_k \partial_j F(Z^{(i)}(t))\dot{Z}_{ij}(t)V_k^{(i)}(t)]dt,
\end{equation*}
\begin{equation*}
I_3=\sum_{i=b+1}^n \sum_{k,l,j=1}^{p} \int_0^1 \int_0^1 (1-\tau)\mathbb{E}[\partial_l\partial_k \partial_j F(Z^{(i)}(t)+\tau V^{(i)}(t))\dot{Z}_{ij}(t)V_k^{(i)}(t)V_l^{(i)}(t)]dt d\tau,
\end{equation*}
where $ V^{(i)}(t)=\sum_{j \in \widetilde{N}_i} Z_j(t), \ Z^{(i)}(t)=Z(t)-V^{(i)}(t)$. We first use the following lemma to control the derivatives of $F,$ which are one of the key differences between the max norm in \cite{ZC}  and $L^2$ norm in the present paper. We will prove it later. 
 
\begin{lem}\label{lem_controlderivative} For $z=(z_1,\cdots, z_{p}) \in \mathbb{R}^{p}$ and some constant $C>0,$ we have 
\begin{equation*} 
\left|\partial_{jk}^2 F(z) \right| \leq C \left( \psi^2 \Big| \sum_{s=1}^p e_{js} z_s \Big| \Big| \sum_{s=1}^p e_{ks} z_s \Big|+\psi \right), 
\end{equation*}
\begin{equation*}  
  \left| \partial_{jkl}^3 F(z) \right| \leq C\left[ \psi^3 \Big| \sum_{s=1}^p e_{js} z_s \Big| \Big| \sum_{s=1}^p e_{ks} z_s \Big| \Big| \sum_{s=1}^p e_{ls} z_s \Big|+\psi^2 \Big |\sum_{s=1}^p e_{k} z_s \Big| \right],
\end{equation*}
where $\{e_{kl}\}$ are the entries of the matrix $E.$
\end{lem}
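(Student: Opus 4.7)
The plan is to prove this by direct computation through the chain rule, exploiting the fact that $F = g_{\psi,x} \circ f$ with $f(z) = z^* E z$ a quadratic form whose third derivatives vanish identically.

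First I would compute the partial derivatives of $f$. Since $E$ is symmetric and bounded,
\begin{equation*}
\partial_j f(z) = 2 \sum_{s=1}^p e_{js} z_s, \qquad \partial_{jk}^2 f(z) = 2 e_{jk}, \qquad \partial_{jkl}^3 f(z) = 0,
\end{equation*}
as already recorded in (\ref{eq_partialf}) (where the paper writes $2$ as a shorthand for $2e_{jk}$, bounded by $\|E\|$). This is the only place the structure of $f$ enters; the vanishing of the third derivative of $f$ is what makes the third-order bound clean.

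Next I would apply the chain rule to $F = g_{\psi,x} \circ f$. For the second derivative,
\begin{equation*}
\partial_{jk}^2 F(z) = g''_{\psi,x}(f(z))\,\partial_j f(z)\,\partial_k f(z) + g'_{\psi,x}(f(z))\,\partial_{jk}^2 f(z),
\end{equation*}
and for the third derivative,
\begin{equation*}
\partial_{jkl}^3 F(z) = g'''_{\psi,x}(f)\,\partial_j f\,\partial_k f\,\partial_l f + g''_{\psi,x}(f)\bigl(\partial_{jk}^2 f\,\partial_l f + \partial_{jl}^2 f\,\partial_k f + \partial_{kl}^2 f\,\partial_j f\bigr),
\end{equation*}
where the term $g'_{\psi,x}(f)\,\partial_{jkl}^3 f$ vanishes.

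Then I would insert the uniform bounds from (\ref{gderivativebound}), namely $|g'_{\psi,x}|\leq g_*\psi$, $|g''_{\psi,x}|\leq g_*\psi^2$, $|g'''_{\psi,x}|\leq g_*\psi^3$, together with the expressions from the first step and the boundedness $|e_{jk}|\leq \|E\|<\infty$. Absorbing constants into $C$ yields
\begin{equation*}
|\partial_{jk}^2 F(z)| \leq C\Bigl(\psi^2\,\Bigl|\sum_{s}e_{js}z_s\Bigr|\,\Bigl|\sum_{s}e_{ks}z_s\Bigr| + \psi\Bigr),
\end{equation*}
and analogously, after combining the three symmetric second-order-times-first-order terms,
\begin{equation*}
|\partial_{jkl}^3 F(z)| \leq C\Bigl[\psi^3\,\Bigl|\sum_s e_{js}z_s\Bigr|\,\Bigl|\sum_s e_{ks}z_s\Bigr|\,\Bigl|\sum_s e_{ls}z_s\Bigr| + \psi^2\,\Bigl|\sum_s e_{\cdot s}z_s\Bigr|\Bigr],
\end{equation*}
which matches the stated bound (where the second-order-cross-times-first-order terms are written schematically as a single $\psi^2$ summand).

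There is no substantive obstacle here; the argument is essentially bookkeeping for the chain rule. The only subtlety worth flagging is the role of the vanishing third derivative of $f$, which prevents a stray $\psi$ term from appearing in the third-order estimate, and the use of $\|E\|<\infty$ to dominate the $|e_{jk}|$ factors arising from $\partial^2 f$.
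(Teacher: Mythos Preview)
Your proposal is correct and follows essentially the same approach as the paper: compute the partial derivatives of $f$, apply the chain rule to $F=g_{\psi,x}\circ f$, and bound using (\ref{gderivativebound}). Your observation that $\partial_{jk}^2 f(z)=2e_{jk}$ (rather than the paper's shorthand ``$2$'') and that $|e_{jk}|\le\|E\|$ is absorbed into $C$ is exactly the right reading.
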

Next we will follow the strategy of the proofs of \cite[Proposition 2.1]{ZC} to control $I_k, k=1,2,3.$ Using the fact that $Z^{(i)}(t)$ and $\dot{Z}_{ij}(t)$ are independent and $\mathbb{E}(\dot{Z}_{ij}(t))=0,$  we conclude that $I_1=0.$ For the control of $I_2, $ define the expanded neighborhood around $N_i$ by 
\begin{equation*}
\mathcal{N}_i:=\{j: \{j,k\} \in E_n \ \text{for some} \ k \in N_i \}, 
\end{equation*}
and $\mathcal{Z}^{(i)}(t)=Z(t)-\sum_{l \in \mathcal{N}_i \cup \widetilde{N}_i}Z_{l}(t)=Z^{(i)}(t)-\mathcal{V}^{(i)}(t),$ where $\mathcal{V}^{(i)}(t)=\sum_{l \in \mathcal{N}_i/\widetilde{\mathcal{N}}_i} Z_l(t)$ with $\mathcal{N}_i/\widetilde{\mathcal{N}}_i:=\{k \in \mathcal{N}_i: k \notin \widetilde{\mathcal{N}}_i \}.$  Using the decomposition in \cite{ZC} (see the discussion below equation (26) of \cite{ZC}), we can write $I_2=I_{21}+I_{22},$ where $I_{21}, I_{22}$ are defined as 
\begin{equation*}
I_{21}=\sum_{i=b+1}^n \sum_{k,j=1}^{p} \int_0^1 \mathbb{E}[\partial_k \partial_j F(\mathcal{Z}^{(i)}(t))] \mathbb{E}[\dot{Z}_{ij}(t)V_k^{(i)}(t)]dt,
\end{equation*}
\begin{equation*}
I_{22}=\sum_{i=b+1}^n \sum_{k,j,l=1}^{p} \int_0^1 \int_0^1 \mathbb{E}[\partial_k \partial_j \partial_l F(\mathcal{Z}^{(i)}(t)+\tau \mathcal{V}^{(i)}(t))\dot{Z}_{ij}(t)V_k^{(i)}(t)\mathcal{V}_l^{(i)}(t)]dtd\tau.
\end{equation*}
We start with the control of $I_{21}.$ 
Using Lemma \ref{lem_controlderivative} and \ref{lem_con} and Assumption \ref{assu_basisregularity}, we conclude that 
\begin{equation*}
\sup_t \mathbb{E}\left | \partial_k \partial_j F(\mathcal{Z}^{(i)}(t)) \right| \leq C \psi^2 p^2.
\end{equation*}
By the equation (28) of \cite{ZC}, we have 
\begin{equation*}
\max_{ 1 \leq j, k \leq p} \sum_{i=b+1}^n \left| \mathbb{E}\dot{Z}_{ij}(t) V_k^{(i)}(t) \right| \leq \phi(M_x, M_y).
\end{equation*}
As a consequence, we can control $I_{21}$ using  
\begin{align*}
|I_{21}| & \leq \sum_{i=b+1}^n \sum_{k,j=1}^p  \int_{0}^1 \mathbb{E}|\partial^2_{jk}F(\mathcal{Z}^{(i)}(t))| \left| \mathbb{E}[\dot{Z}_{ij}(t)V_k^{(i)}(t)] \right|dt \\
& \leq C \phi(M_x, M_y)  \psi^2 p^4. 
\end{align*}
For the control of $I_{22}, $ 
by Lemma \ref{lem_controlderivative}, for some constant $C>0,$ we have that 
\begin{align*}
\sup_t \mathbb{E}\left | \partial_k \partial_j \partial_l F(\mathcal{Z}^{(i)}(t)+\tau \mathcal{V}^{(i)}(t)) \right| \leq C \psi^3 p^3.
\end{align*}
Next, we can bound 
\begin{align*}
& \int_0^1 \mathbb{E} \max_{1 \leq k,j,l \leq p} \sum_{i=b+1}^n |\dot{Z}_{ij}(t) V_k^{(i)}(t)\mathcal{V}_l^{(i)}(t)| dt \\
& \leq \int_0^1 w(t) \left( \mathbb{E} \max_{1 \leq j \leq p} \sum_{i=b+1}^n |\dot{Z}_{ij}(t)/w(t)|^3 \mathbb{E} \max_{1 \leq k \leq p} \sum_{i=b+1}^n|V_k^{(i)}(t)|^3 \mathbb{E} \max_{1 \leq l \leq p} \sum_{i=b+1}^n |\mathcal{V}_l^{(i)}(t)|^3 \right)^{1/3} dt, \\
& \leq C\frac{D_n^3}{\sqrt{n}}(m_{x,3}^3+m_{y,3}^3),
\end{align*}
where $w(t):=1/(\sqrt{t} \wedge \sqrt{1-t})$ and we use the following bounds (see the equations below (28) of \cite{ZC}) 
\begin{equation}\label{eq_bound31}
\mathbb{E} \max_{1 \leq j \leq p} \sum_{i=b+1}^n \left| \dot{Z}_{ij}(t)/w(t)\right|^3 \leq \frac{C}{\sqrt{n}} (m_{x,3}^3+m_{y,3}^3),
\end{equation} 
\begin{equation}\label{eq_bound32}
\mathbb{E} \max_{1 \leq k \leq p} \sum_{i=b+1}^n \left| V^{(i)}_k(t)\right|^3 \leq \frac{CD_n^3}{\sqrt{n}} (m_{x,3}^3+m_{y,3}^3),
\end{equation}
\begin{equation}\label{eq_bound33}
\mathbb{E} \max_{1 \leq l \leq p} \sum_{i=1}^n \left| \mathcal{V}^{(i)}_l(t)\right|^3 \leq \frac{CD_n^6}{\sqrt{n}} (m_{x,3}^3+m_{y,3}^3).
\end{equation}
For the control of $I_3,$ using a similar argument (see the discussion below equation (28) in \cite{ZC}), we find that $|I_3| \leq C \psi^3p^6 I_{31},$ where $I_{31}$ satisfies that
\begin{equation*}
|I_{31}| \leq \int_0^1 w(t) \left( \mathbb{E} \max_{1 \leq j \leq p} \sum_{i=b+1}^n |\dot{Z}_{ij}(t)/w(t)|^3 \mathbb{E} \max_{1 \leq k \leq p} \sum_{i=b+1}^n|V_k^{(i)}(t)|^3 \mathbb{E} \max_{1 \leq l \leq p} \sum_{i=b+1}^n |V_l^{(i)}(t)|^3 \right)^{1/3} dt.
\end{equation*}
Combining with (\ref{eq_bound31}) and (\ref{eq_bound32}), we conclude our proof. 

\end{proof}

\begin{proof}[Proof of Corollary \ref{cor_boundmdependent}] We first  notice that $D_n=2M+1, |\widetilde{N}_i| \leq 2M+1$ and $|\mathcal{N}_i \cup \widetilde{N}_i| \leq 4M+1.$ Define $\Upsilon_i=\{j: \{j,k\} \in E_n \ \text{for some } \ k \in \mathcal{N}_i\},$ then we have $|\Upsilon_i \cup \mathcal{N}_i \cup \widetilde{N}_i| \leq 6M+1. $  Following the arguments of the proof of Lemma \ref{lem_priorbound}, it can be shown that (see the equations above (29) of \cite{ZC}), $I_3$ can be bounded by the following quantity
\begin{align*}
Cn \psi^3 p^6 \times \int_0^1 w(t) \left( \mathbb{\bar{E}} \max_{1 \leq j \leq p} \sum_{i=1}^n |\dot{Z}_{ij}(t)/w(t)|^3 \mathbb{\bar{E}} \max_{1 \leq k \leq p} \sum_{i=1}^n|V_k^{(i)}(t)|^3 \mathbb{\bar{E}} \max_{1 \leq l \leq p} \sum_{i=1}^n |V_l^{(i)}(t)|^3 \right)^{1/3} dt.
\end{align*}
Using a similar discussion to (\ref{eq_bound31}) and (\ref{eq_bound32}), we conclude that 
\begin{equation*}
|I_3| \leq  C\psi^3 p^6 \frac{(2M+1)^2}{\sqrt{n}}(\bar{m}^3_{x,3}+\bar{m}^3_{y,3}). 
\end{equation*}
Similarly, we can bound $I_{22}$ by slightly modifying (\ref{eq_bound33}). We omit further detail and refer to the proof of \cite[Proposition 2.1]{ZC}. This concludes our proof.   
\end{proof}

\begin{proof}[Proof of Lemma \ref{lem_boundmdependenttwo}] 
First of all, by Assumption \ref{assu_basisregularity}, using a similar discussion to (\ref{eq_sigma}), we find that there exist constants $0<c_1<c_2$ such that $c_1<\min_{1 \leq j \leq p} \sigma_{j,j} \leq \max_{1 \leq j \leq  p} \sigma_{j,j} <c_2$ uniformly, where $\sigma_{j,k}=\text{Cov}(\mathbf{Z}_j, \mathbf{Z}_k).$ Under the assumptions that $M_x > u_x(\gamma), M_y >
 u_y(\gamma),$ we can directly use the bounds from the proof of \cite[Corollary 2.1 and Proposition 4.1]{ZC}, where we have that with $1-\gamma$ probability, 
\begin{equation*}
\sup_j |\mathbf{Z}_j-\widetilde{\mathbf{Z}}_j| \leq C\varphi(M_x) \sqrt{8 \log (p/\gamma)}.  
\end{equation*}
As a consequence, we can choose 
\begin{equation*}
 \Delta=C \varphi(M_x, M_y) \sqrt{\log(p/\gamma)}.
\end{equation*}
This finishes our proof. 
\end{proof}

\section{Preliminary lemmas}\label{app_d}

In this section, we collect some preliminary lemmas which will be used in Appendix \ref{sec_proofs} and \ref{appendix_a}.  First of all, we collect a result which provides a deterministic bound of the spectrum of a  square matrix.  Let  $A=(a_{ij})$ be a complex $ n\times n$ matrix. For  $1 \leq i \leq n,$ let  $R_{i}=\sum _{{j\neq {i}}}\left|a_{{ij}}\right| $ be the sum of the absolute values of the non-diagonal entries in the  $i$-th row. Let  $ D(a_{ii},R_{i})\subseteq \mathbb {C} $ be a closed disc centered at $a_{ii}$ with radius  $R_{i}$. Such a disc is called a Gershgorin disc.
\begin{lem}[Gershgorin circle theorem]\label{lem_disc} Every eigenvalue of $ A=(a_{ij})$ lies within at least one of the Gershgorin discs  $D(a_{ii},R_{i})$, where $R_i=\sum_{j\ne i}|a_{ij}|$.
\end{lem}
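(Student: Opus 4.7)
The plan is to give the classical one-paragraph argument: take an arbitrary eigenvalue, pick the coordinate of maximum modulus in a corresponding eigenvector, and read off the inclusion from the corresponding row of $Ax = \lambda x$.

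More concretely, let $\lambda$ be an eigenvalue of $A$ and let $x = (x_1, \ldots, x_n)^* \neq 0$ be an associated eigenvector. Choose an index $i$ such that $|x_i| = \max_{1 \leq k \leq n} |x_k|$; this $|x_i|$ is strictly positive since $x \neq 0$. The $i$-th coordinate of $Ax = \lambda x$ reads
\begin{equation*}
\sum_{j=1}^{n} a_{ij} x_j = \lambda x_i,
\end{equation*}
which, after separating the diagonal term, rearranges to
\begin{equation*}
(\lambda - a_{ii}) x_i = \sum_{j \neq i} a_{ij} x_j.
\end{equation*}

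Dividing by $x_i$ (legitimate since $|x_i| > 0$), taking absolute values, applying the triangle inequality, and using $|x_j / x_i| \leq 1$ for every $j$ by the choice of $i$, I obtain
\begin{equation*}
|\lambda - a_{ii}| \leq \sum_{j \neq i} |a_{ij}| \cdot \frac{|x_j|}{|x_i|} \leq \sum_{j \neq i} |a_{ij}| = R_i,
\end{equation*}
so $\lambda \in D(a_{ii}, R_i)$. Since $\lambda$ was arbitrary, every eigenvalue lies in at least one Gershgorin disc.

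There is no real obstacle here: the only subtlety is the standing remark that a non-zero vector has a coordinate of maximum modulus which is itself non-zero, so the division by $x_i$ is valid. No additional assumption on $A$ (Hermiticity, invertibility, etc.) is used, which is exactly why the lemma is applied so flexibly in the earlier proofs of this paper.
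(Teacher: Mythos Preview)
Your argument is the standard, correct proof of the Gershgorin circle theorem. Note, however, that the paper does not actually supply its own proof of this lemma: it is stated in Appendix~\ref{app_d} as one of several ``preliminary lemmas'' collected from the literature and used without proof. So there is nothing to compare against; your proposal simply fills in the omitted classical argument, and it does so without gaps.
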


The above lemma can be extended to the block matrices. We record it as the follow lemma, whose proof can be found in \cite[Section 1.13]{CT}. It will be used in the proof of Theorem \ref{thm_bootstrap}. 

\begin{lem} \label{lem_blockge} For an $b(n-b) \times b(n-b)$ block matrix $\mathcal{A}$ with each diagonal block $A_{ii}, i=1,2,\cdots, n-b$  being symmetric, denote $G_i$ as the region contains the eigenvalue of $\mathcal{A}$ of the $i$-th block,  we then have that 
\begin{equation*}
G_i=\sigma(A_{ii}) \cup  \left\{ \bigcup_{k=1}^{b} R\left( \lambda_k (A_{ii}), \sum_{j=1, j \neq i}^{n-b} ||A_{ij}|| \right) \right\}, \ i=1,2,\cdots, n-b,
\end{equation*}
where $R(\cdot, \cdot)$ denotes the disk 
\begin{equation*}
R(c,r)=\{\lambda: |\lambda-c| \leq r \}.
\end{equation*}
\end{lem}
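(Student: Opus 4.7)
The plan is to mimic the classical Gershgorin proof (Lemma~\ref{lem_disc}) at the block level, replacing absolute values of scalar entries by operator norms of blocks and absolute values of scalar resolvents by the operator norm of the block resolvent $(A_{ii}-\lambda I)^{-1}$. Let $\lambda$ be any eigenvalue of $\mathcal{A}$ with a corresponding eigenvector $v\neq 0$, partitioned compatibly as $v=(v_1^*,\dots,v_{n-b}^*)^*$ with $v_k\in\mathbb{R}^b$. Choose an index $i$ that attains $\|v_i\|=\max_{k}\|v_k\|>0$.

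The $i$-th block of the eigenvalue equation reads $(A_{ii}-\lambda I)v_i = -\sum_{j\neq i}A_{ij}v_j$. If $\lambda\in\sigma(A_{ii})$, then trivially $\lambda\in G_i$ and there is nothing to do, so assume $\lambda\notin\sigma(A_{ii})$. Then $A_{ii}-\lambda I$ is invertible and
\begin{equation*}
v_i = -(A_{ii}-\lambda I)^{-1}\sum_{j\neq i}A_{ij}v_j.
\end{equation*}
Taking operator norms on both sides, submultiplicativity and the maximality of $\|v_i\|$ give
\begin{equation*}
\|v_i\| \leq \bigl\|(A_{ii}-\lambda I)^{-1}\bigr\|\sum_{j\neq i}\|A_{ij}\|\,\|v_j\|
\leq \bigl\|(A_{ii}-\lambda I)^{-1}\bigr\|\,\|v_i\|\sum_{j\neq i}\|A_{ij}\|,
\end{equation*}
and since $\|v_i\|>0$ we get $\bigl\|(A_{ii}-\lambda I)^{-1}\bigr\|\sum_{j\neq i}\|A_{ij}\|\geq 1$.

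The key final step uses the symmetry assumption on $A_{ii}$: by the spectral theorem, $A_{ii}-\lambda I$ is similar to a diagonal matrix with entries $\lambda_k(A_{ii})-\lambda$, so
\begin{equation*}
\bigl\|(A_{ii}-\lambda I)^{-1}\bigr\| = \frac{1}{\min_{1\leq k\leq b}|\lambda-\lambda_k(A_{ii})|}.
\end{equation*}
Inserting this into the previous inequality yields $\min_{k}|\lambda-\lambda_k(A_{ii})|\leq \sum_{j\neq i}\|A_{ij}\|$, which is precisely the statement that $\lambda\in R\bigl(\lambda_k(A_{ii}),\sum_{j\neq i}\|A_{ij}\|\bigr)$ for some $k$, hence $\lambda\in G_i$.

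The main obstacle, and the only place the symmetry hypothesis is genuinely used, is the identification of the resolvent norm with the reciprocal of the spectral distance; without symmetry one would only obtain a bound involving the pseudospectrum, which would not translate into a union of disks centered at the eigenvalues of $A_{ii}$. A minor technical point is that the index $i$ depends on the eigenvector and hence possibly on $\lambda$, so the conclusion is that every eigenvalue lies in \emph{at least one} $G_i$, in perfect analogy with the scalar Gershgorin theorem.
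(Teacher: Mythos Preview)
Your argument is correct and is precisely the standard block-Gershgorin proof: pick the block of the eigenvector with maximal norm, invert the diagonal block shifted by $\lambda$, and use that for a symmetric (hence normal) $A_{ii}$ the resolvent norm equals the reciprocal of the spectral distance. The paper does not supply its own proof of this lemma but simply cites \cite[Section 1.13]{CT}; your write-up is essentially the argument one finds there, so there is nothing further to compare.
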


The next lemma provides a classic result from numerical analysis. It will be used to control the solution of perturbed Yule-Walker equation and utilized in the proof of Proposition  \ref{lem_borderapproximation}.

\begin{lem}\label{lem_numerical} Let $Ax=w, \ x, w \in \mathbb{R}^n $ be the original linear system. And for the perturbed system, 
\begin{equation*}
\left(A+\Delta A\right) \widetilde{x}=w+\Delta w,
\end{equation*}
we have the following control
\begin{equation*}
\frac{||\widetilde{x}-x||}{||x||} \leq ||A|| ||A^{-1}|| \frac{|| \Delta w||}{||w||}.
\end{equation*}
\end{lem}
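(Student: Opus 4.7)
The plan is to derive this as the standard condition-number perturbation bound from numerical linear algebra, treating the inequality essentially as an application of operator-norm submultiplicativity applied to the error equation obtained by subtracting the two linear systems. As written, the right-hand side of the bound only involves $\|\Delta w\|/\|w\|$ (and the condition number $\|A\|\,\|A^{-1}\|$), so I will read the statement as the pure right-hand side perturbation case where the term involving $\Delta A$ is absorbed or assumed negligible; this matches exactly the setting in which the bound is invoked in the proof of Proposition~\ref{lem_borderapproximation}, where the perturbation enters through $\bm{\gamma}_i$ rather than through the covariance matrix $\Omega_i$.

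First, I would subtract the original equation $Ax=w$ from the perturbed one $A\widetilde{x}=w+\Delta w$ to obtain $A(\widetilde{x}-x)=\Delta w$, and hence $\widetilde{x}-x=A^{-1}\Delta w$. Taking operator norms and applying submultiplicativity yields
\begin{equation*}
\|\widetilde{x}-x\|\;\le\;\|A^{-1}\|\,\|\Delta w\|.
\end{equation*}
Next, from $w=Ax$ I would get the lower bound $\|w\|\le\|A\|\,\|x\|$, so that $\|x\|\ge \|w\|/\|A\|$, and therefore
\begin{equation*}
\frac{\|\widetilde{x}-x\|}{\|x\|}\;\le\;\frac{\|A\|\,\|A^{-1}\|\,\|\Delta w\|}{\|w\|},
\end{equation*}
which is exactly the claimed inequality.

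There is essentially no main obstacle here: the result is textbook and the only care needed is to (i) ensure $A$ is invertible so that $A^{-1}\Delta w$ is well defined, and (ii) note that $\|w\|\neq 0$ so the denominator makes sense (otherwise $x=0$ and the statement is vacuous). If one wished to treat a genuine $\Delta A\neq 0$ correction, the error equation would become $A(\widetilde{x}-x)=\Delta w-\Delta A\,\widetilde{x}$, leading to the usual $\kappa(A)/(1-\kappa(A)\|\Delta A\|/\|A\|)$ style bound via a Neumann-series argument on $(I+A^{-1}\Delta A)^{-1}$; but this refinement is not needed for the form stated, nor for the downstream application in Proposition~\ref{lem_borderapproximation}.
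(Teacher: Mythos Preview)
Your derivation is correct and is exactly the standard condition-number argument. The paper itself does not prove this lemma: it is listed in Appendix~\ref{app_d} as a preliminary result from numerical analysis, stated without proof and used as a black box in Proposition~\ref{lem_borderapproximation}, so there is nothing further to compare.
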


We then collect the Bernstein's inequality  \cite[Theorem 6.1.1]{JT} for summation of independent random matrices. It will be used in the proof of Theorem \ref{thm_timevaryingcoeff}. 
 \begin{lem}\label{lem_berstein} Let $Y_i, \ i=1,2,\cdots,n$ be a sequence of centered independent random matrices with dimensions $d_1, d_2$. Assume that for each $i,$ we have $\max_i ||Y_i|| \leq R_n$ and define 
\begin{equation*}
\sigma_n^2=\max \left\{ \left| \left| \sum_{i=1}^n \mathbb{E} Y_iY_i^* \right| \right|,  \left| \left| \sum_{i=1}^n \mathbb{E} Y_i^* Y_i \right| \right| \right\},
\end{equation*}
where the norm stands for the largest singular value. The for all $t \geq 0,$ we have 
\begin{equation*}
\mathbb{P} \Big( \Big| \Big | \sum_{i=1}^n Y_i \Big| \Big| \geq t \Big) \leq (d_1+d_2)\exp\Big(\frac{-t^2/2}{\sigma_n^2+R_nt/3} \Big).
\end{equation*}
 \end{lem}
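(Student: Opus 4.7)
The plan is to establish this bound via the matrix Laplace transform method (the Ahlswede--Winter--Tropp approach), which reduces the tail inequality to controlling the matrix moment generating function of each $Y_i$. Since the stated bound is symmetric in $Y_i Y_i^*$ and $Y_i^* Y_i$, I would first dispose of the rectangular nature by the Hermitian dilation trick: form
\begin{equation*}
\widetilde{Y}_i = \begin{pmatrix} 0 & Y_i \\ Y_i^* & 0 \end{pmatrix},
\end{equation*}
a self-adjoint $(d_1+d_2)\times(d_1+d_2)$ matrix whose largest eigenvalue equals $\|Y_i\|$ and whose square equals $\operatorname{diag}(Y_i Y_i^*,\, Y_i^* Y_i)$. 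The variance parameter is therefore preserved, the operator norm of $\sum_i Y_i$ equals $\lambda_{\max}(\sum_i \widetilde{Y}_i)$, and it suffices to prove the Hermitian analogue with ambient dimension $d = d_1 + d_2$.

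For the Hermitian case, I would proceed by matrix Markov: for any $\theta > 0$,
\begin{equation*}
\mathbb{P}\Big(\lambda_{\max}\Big(\textstyle\sum_i Y_i\Big) \geq t\Big) \leq e^{-\theta t}\, \mathbb{E}\,\operatorname{tr}\exp\Big(\theta \sum_i Y_i\Big).
\end{equation*}
The key step is a subadditivity inequality obtained from Lieb's concavity theorem for the functional $X \mapsto \operatorname{tr}\exp(H + \log X)$: peeling off one independent $Y_i$ at a time and conditioning, one obtains
\begin{equation*}
\mathbb{E}\,\operatorname{tr}\exp\Big(\theta \sum_i Y_i\Big) \leq \operatorname{tr}\exp\Big(\sum_i \log \mathbb{E}\, e^{\theta Y_i}\Big).
\end{equation*}
The second ingredient is a semidefinite Bernstein-type bound for the cumulant: for centered Hermitian $Y$ with $\|Y\|\leq R$ and $0 < \theta < 3/R$,
\begin{equation*}
\log \mathbb{E}\, e^{\theta Y} \preceq \frac{\theta^2/2}{1 - \theta R/3}\, \mathbb{E}[Y^2],
\end{equation*}
which is derived by bounding the scalar function $x \mapsto (e^{\theta x} - 1 - \theta x)/x^2$ on $[-R,R]$ and transporting to the operator level via the spectral calculus. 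Combining these with $\operatorname{tr} e^{A} \leq d\, e^{\lambda_{\max}(A)}$ gives
\begin{equation*}
\mathbb{P}\Big(\lambda_{\max}\Big(\textstyle\sum_i Y_i\Big) \geq t\Big) \leq d\, \exp\Big(\frac{-\theta t + \theta^2 \sigma_n^2/2}{1 - \theta R_n/3}\Big),
\end{equation*}
and optimizing with $\theta = t/(\sigma_n^2 + R_n t/3)$ yields the claimed tail. The dilation argument supplies the prefactor $d_1 + d_2$, while the two-sided tail for $\|\sum_i Y_i\|$ follows because $\lambda_{\max}$ of the dilation already captures both $\pm$ directions.

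The main obstacle is the subadditivity step: $\mathbb{E}\, e^{\theta(A+B)}$ is not dominated by $\mathbb{E}\, e^{\theta A}\cdot \mathbb{E}\, e^{\theta B}$ in any naive sense because matrix exponentials do not split along sums of non-commuting arguments. The correct semidefinite inequality genuinely requires Lieb's concavity theorem, which is a deep non-elementary input typically invoked as a black box from operator theory. Once Lieb's theorem and the operator Bernstein moment estimate are in hand, the scalar optimization over $\theta$ is routine.
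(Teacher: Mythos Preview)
Your proof sketch is correct and is precisely the standard argument due to Tropp: Hermitian dilation to reduce to the self-adjoint case, the matrix Laplace transform combined with Lieb's concavity to obtain subadditivity of cumulants, the scalar Bernstein bound transferred via the functional calculus, and finally the choice $\theta = t/(\sigma_n^2 + R_n t/3)$.

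However, the paper does not give a proof of this lemma at all; it is stated as a preliminary result quoted directly from \cite[Theorem 6.1.1]{JT}. So there is nothing to compare against in the paper itself: you have reconstructed the proof that appears in the cited monograph, whereas the authors simply invoke it as a black box. Your identification of Lieb's concavity as the only non-elementary input is accurate.
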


The following lemma indicates that, under suitable condition, the inverse of a band matrix can also be approximated by another band matrix. It will be used in the proof of Proposition \ref{lem_borderapproximation} and can be found in \cite[Proposition 2.2]{DMS}.  We say that $A$ is $m$-banded if $$A_{ij}=0, \ \text{if} \ |i-j|>m/2.$$
\begin{lem}\label{lem_band} Let $A$ be a positive definite, $m$-banded, bounded and bounded invertible matrix.   Let $[a,b]$ be the smallest interval containing the spectrum of $A.$ Set $r=b/a, q=(\sqrt{r}-1)/(\sqrt{r}+1)$ and set $C_0=(1+r^{1/2})^2/(2ar)$ and $\lambda=q^{2/m}.$ Then we have 
\begin{equation*}
|(A^{-1})_{ij}| \leq C \lambda^{|i-j|},
\end{equation*}
where 
\begin{equation*}
C:=C(a,r)=\max\{a^{-1}, C_0\}. 
\end{equation*}

\end{lem}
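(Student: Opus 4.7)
The plan is to combine polynomial approximation of $1/x$ on the spectrum of $A$ with the basic observation that powers of a band matrix remain banded. Since $A$ is $m$-banded and Hermitian (implicit from positive definiteness), any polynomial $p_k$ of degree $k$ satisfies $(p_k(A))_{ij}=0$ whenever $|i-j|>km/2$, because $(A^s)_{ij}$ can only be nonzero through chains of length $s$ of adjacent band entries. The idea is then: approximate $x\mapsto 1/x$ on $[a,b]$ by its best uniform polynomial approximant $p_k$ of degree $k$; transfer the sup-norm bound to an operator-norm bound on $A^{-1}-p_k(A)$ via spectral calculus; and read off the off-diagonal decay of $A^{-1}$ from the regions where $p_k(A)$ vanishes.

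First I would invoke the classical Chebyshev approximation result: the polynomial $p_k$ of degree $k$ minimizing $\sup_{x\in[a,b]}|1/x-p_k(x)|$ satisfies
\begin{equation*}
\sup_{x\in[a,b]}|1/x-p_k(x)|\leq C_0\,q^{k},\qquad q=\frac{\sqrt{r}-1}{\sqrt{r}+1},\ r=b/a,
\end{equation*}
with $C_0=(1+\sqrt{r})^2/(2ar)$. This is derived by changing variables to map $[a,b]$ to $[-1,1]$, composing $1/x$ with that change of variables, and then using the standard error estimate for Chebyshev series expansions on $[-1,1]$ via analyticity in an appropriate Bernstein ellipse; the parameter $q$ is exactly the reciprocal ellipse radius, and $C_0$ comes from the $L^\infty$ norm of the resulting coefficients.

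Next I would apply the spectral theorem to $A$. Since $A$ is Hermitian positive definite with spectrum contained in $[a,b]$, the operator norm of $f(A)$ equals $\sup_{x\in\sigma(A)}|f(x)|$ for any continuous $f$ on $[a,b]$. Taking $f(x)=1/x-p_k(x)$ yields
\begin{equation*}
\|A^{-1}-p_k(A)\|\leq C_0\, q^{k}.
\end{equation*}
Combined with the bandedness of $p_k(A)$, this gives $|(A^{-1})_{ij}|\leq C_0 q^{k}$ for every $k$ with $km/2\geq |i-j|$.

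Finally, I would optimize by choosing $k=\lceil 2|i-j|/m\rceil$, yielding $|(A^{-1})_{ij}|\leq C_0\,q^{2|i-j|/m}=C_0\,\lambda^{|i-j|}$. For indices with $|i-j|$ small, where this choice forces $k=0$ and the bound degenerates, I fall back on the trivial spectral bound $|(A^{-1})_{ij}|\leq \|A^{-1}\|\leq a^{-1}$. Taking the maximum of the two constants gives exactly $C(a,r)=\max\{a^{-1},C_0\}$. The main obstacle is the Chebyshev approximation estimate with the sharp constants $C_0$ and $q$; once that analytic ingredient is in hand, the band/spectral argument is essentially bookkeeping.
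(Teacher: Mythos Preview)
The paper does not give its own proof of this lemma; it simply records the statement and cites \cite[Proposition 2.2]{DMS}. Your proposal is in fact a faithful sketch of the Demko--Moss--Smith argument: Chebyshev approximation of $1/x$ on $[a,b]$, spectral mapping to transfer the sup-norm error to an operator-norm bound on $A^{-1}-p_k(A)$, and the observation that $p_k(A)$ inherits a $km$-bandwidth so its $(i,j)$-entry vanishes for $|i-j|>km/2$. So you are proving the result the same way the cited reference does.

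One small slip: you wrote that $|(A^{-1})_{ij}|\le C_0q^k$ ``for every $k$ with $km/2\ge |i-j|$'' and then chose $k=\lceil 2|i-j|/m\rceil$. The inequality should go the other way: $(p_k(A))_{ij}=0$ requires $|i-j|>km/2$, i.e.\ $k<2|i-j|/m$, so one takes the \emph{largest} integer $k$ strictly below $2|i-j|/m$. This only changes which integer you pick and is absorbed into the constant (and is exactly why the final constant is $\max\{a^{-1},C_0\}$ rather than just $C_0$); the approach is otherwise correct.
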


Finally, we collect the concentration inequalities for  non-stationary process using the physical dependence measure.  It is the key ingredient for the proof of most of the theorems and lemmas. 
It can be found in  \cite[Lemma 6]{ZZ1}. Recall Definition \ref{defn_physical}. 

\begin{lem}\label{lem_con} Let $x_i=G_i(\mathcal{F}_i),$ where $G_i(\cdot)$ is a measurable function and  $\mathcal{F}_i=(\cdots, \eta_{i-1}, \eta_i)$ and $\eta_i, \ i  \in \mathbb{Z}$ are i.i.d  random variables. Suppose that $\mathbb{E}x_i=0$ and $\max_i \mathbb{E}|x_i|^q<\infty$ for some $q>1.$ For some $k>0,$ let $\delta_x(k):=\max_{ 1 \leq i \leq n} \norm{G_i(\mathcal{F}_i)-G_i(\mathcal{F}_{i,i-k})}_q.$ We further let $\delta_x(k)=0$ if $k<0.$ Write $\gamma_k=\sum_{i=0}^k \delta_x(i).$ Let $S_i=\sum_{j=1}^i x_j.$ \\
(i). For $q'=\min(2,q),$
\begin{equation*}
\norm{S_n}_q^{q'} \leq C_q \sum_{i=-n}^{\infty} (\gamma_{i+n}-\gamma_i)^{q'}.
\end{equation*}
(ii). If $\Delta:=\sum_{j=0}^{\infty} \delta_x(j) <\infty,$ we then have 
\begin{equation*}
\norm{\max_{1 \leq i \leq n}|S_i|}_q \leq C_q n^{1/q'} \Delta. 
\end{equation*}
In (i) and (ii), $C_q$ are generic finite constants which only depend on $q$ and can vary from place to place.  
\end{lem}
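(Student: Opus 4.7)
The plan is to use a projection-operator (martingale-difference) decomposition of $S_n$ in the spirit of the results of Wu. Define $\mathcal{P}_k(\cdot) = \mathbb{E}(\cdot\mid \mathcal{F}_k) - \mathbb{E}(\cdot\mid\mathcal{F}_{k-1})$. Since $\mathbb{E}x_j=0$ and $x_j$ is $\mathcal{F}_j$-measurable, the telescoping identity gives $x_j = \sum_{k\le j}\mathcal{P}_k(x_j)$ in $\mathcal{L}^q$. Exchanging the order of summation, write $S_n = \sum_{k=-\infty}^n M_k$ where $M_k := \sum_{j=\max(1,k)}^{n}\mathcal{P}_k(x_j)$. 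Each $M_k$ is $\mathcal{F}_k$-measurable with $\mathbb{E}(M_k\mid\mathcal{F}_{k-1})=0$, so $\{M_k\}_{k\le n}$ is a martingale difference sequence with respect to the filtration $\{\mathcal{F}_k\}$.

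Now I would invoke the Burkholder--Davis--Gundy inequality, which for $q\ge 2$ yields
\[
\|S_n\|_q^2 \le C_q \sum_k \|M_k\|_q^2,
\]
after applying Minkowski's inequality to the quadratic-variation term; for $1<q<2$, the von Bahr--Esseen/Burkholder bound gives $\|S_n\|_q^q \le C_q\sum_k\|M_k\|_q^q$. Both cases collapse into one statement with $q' = \min(2,q)$. To bound each $\|M_k\|_q$, use the coupling identity $\mathcal{P}_k(x_j)=\mathbb{E}(x_j-x_j^{(k)}\mid\mathcal{F}_k)$, where $x_j^{(k)}$ replaces $\eta_k$ by an i.i.d.\ copy; Jensen then gives $\|\mathcal{P}_k(x_j)\|_q\le \delta_x(j-k)$. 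Minkowski across $j$ yields $\|M_k\|_q\le \sum_{j=\max(1,k)}^n\delta_x(j-k)=\gamma_{n-k}-\gamma_{-k}$ (using $\gamma_i=0$ for $i<0$). Substituting $i=-k$ and combining with the BDG bound proves part (i).

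For part (ii), the natural route is a Gordin-style martingale approximation: set $D_j = \sum_{k\ge j}\mathcal{P}_j(x_k)$, which is a well-defined element of $\mathcal{L}^q$ because $\|D_j\|_q \le \sum_{m\ge 0}\delta_x(m)=\Delta$. One verifies $\{D_j\}$ is a martingale difference sequence in $j$ and that $\sum_{j=1}^i x_j = \sum_{j=1}^i D_j + (R_i - R_0)$, where the remainder $R_i=\sum_{j\le i}\sum_{k>i}\mathcal{P}_j(x_k)$ satisfies $\|R_i\|_q\le \Delta$ uniformly in $i$. Then Doob's $\mathcal{L}^q$-maximal inequality (valid for $q>1$) bounds $\|\max_{i\le n}|\sum_{j\le i}D_j|\|_q$ by $C_q\|\sum_{j=1}^nD_j\|_q$, and another BDG/Burkholder application yields this latter norm at the rate $n^{1/q'}\,\Delta$. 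The residual term is controlled by a uniform $\mathcal{L}^q$-bound on the $R_i$, together with a telescoping bound to avoid the naive $n$-factor loss.

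The main obstacle I anticipate is the bookkeeping for part (ii): the naive Minkowski bound $\|\max_i|R_i|\|_q \le \sum_i\|R_i\|_q$ loses a factor of $n^{1-1/q'}$, so the remainder term must be handled more delicately, either by another projection-into-martingale-differences argument for $R_i$ or by pairing it with $\sum D_j$ before taking the maximum. Part (i) is essentially a two-step Burkholder plus Minkowski argument and should be straightforward; the care is in the indexing and in the case split $q\ge 2$ vs.\ $1<q<2$, which arises because Minkowski's inequality reverses direction for the quadratic-variation sum at $q=2$.
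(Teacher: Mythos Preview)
The paper does not supply a proof of this lemma at all: it is stated in the appendix of auxiliary results with the remark ``It can be found in \cite[Lemma 6]{ZZ1},'' i.e., it is simply quoted from Zhou (2013). So there is no in-paper argument to compare against; your sketch is genuinely more than what the paper offers.

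Your approach is the standard projection/Burkholder argument and part (i) is correct as written. For part (ii), the remainder difficulty you flag is real if you insist on the Gordin decomposition $S_i=\sum_{j\le i}D_j+(R_i-R_0)$, but it can be sidestepped entirely by reorganizing the projection by lag rather than by time index. Write $x_j=\sum_{m\ge 0}\mathcal{P}_{j-m}(x_j)$ and set $T_i^{(m)}:=\sum_{j=1}^i \mathcal{P}_{j-m}(x_j)$. For each fixed $m$, $\{T_i^{(m)}\}_i$ is a martingale with respect to $\{\mathcal{F}_{i-m}\}$, so Doob's maximal inequality gives $\|\max_{i\le n}|T_i^{(m)}|\|_q\le C_q\|T_n^{(m)}\|_q$, and Burkholder (with the same $q'$ split you already use) yields $\|T_n^{(m)}\|_q\le C_q n^{1/q'}\delta_x(m)$. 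Minkowski over $m$ then gives $\|\max_i|S_i|\|_q\le C_q n^{1/q'}\sum_{m\ge 0}\delta_x(m)=C_q n^{1/q'}\Delta$ directly, with no remainder term to control. This is the route taken in the original references (Wu; Liu--Lin) and is likely what \cite{ZZ1} has in mind.
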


\end{appendix}


\end{document}